\title {Dynamics on trees of spheres.}
\newtheorem{theorem} {Theorem}[section]
\newtheorem{proposition}[theorem]{Proposition}
\newtheorem{lemma}[theorem]{Lemma}
\newtheorem{corollary}[theorem]{Corollary}
\newtheorem{definition}[theorem]{Definition}
\newtheorem{question}[theorem]{Question}
\newtheorem{example}[theorem]{Example}
\theoremstyle{plain}
\newtheorem{theoremint} {Theorem}
\newtheorem*{definitionint}{Definition}
\newtheorem*{lemmaint}{Claim}
\theoremstyle{remark}
\newtheorem{remark}[theorem]{Remark}
\renewenvironment{proof}{\noindent{\bf Proof. }}{\hfill{$\square$} \vskip.3cm}
\newenvironment{recall}{\noindent{\bf Recall. }}{\medskip}
\def\cal{\mathcal}
\def\C{{\mathbb C}}
\def\F{{\mathcal F}}
\def\N{{\mathbb N}}
\def\S{{\mathbb S}}
\def\St{{\mathcal S}}
\def\T{{\mathcal T}}
\def\Z{{\mathbb Z}}
\def\ts{{tree of spheres }}
\def\tss{{trees of spheres}}
\def\tmb{{tree marked by}}
\def\Aut{{\rm Aut}}
\def\Rat{{\rm Rat}}
\def\rat{{\rm rat}}
 \def\epsilon{{\varepsilon}}
\def\val{{\rm val}}
\def\card{{\rm card}}
\def\deg{{\rm deg}}
\def\mult{{\rm mult\,}}
\def\Crit{{\rm Crit}}
\def\Prep{{\rm Prep}}
\keywords{holomorphic dynamics, rescaling limits, trees, compactification}
\subjclass{37F20}
\address{Pontificia Universidad Cat\'olica de Valparaíso\\
Instituto de Matem\'aticas\\ 
Av. Brasil 2950\\
Valparaíso (Chile)}
\email{matthieu.arfeux@pucv.cl}
\begin{document}

\title{Dynamics on Trees of Spheres}

\author{Matthieu Arfeux}

\maketitle

\begin{abstract}

We introduce the notion of dynamically marked rational map and study sequences of analytic conjugacy classes of rational maps which diverge in the moduli space. In particular, we are interested in the notion of rescaling limits introduced by Jan Kiwi. For this purpose, we introduce the notions of trees of  spheres, covers between them and dynamical covers between them. 
We will study fundamental properties of these objects. We prove that they appear naturally as limits of marked spheres, respectively marked rational maps and dynamically marked rational maps.

 We also prove that a periodic sphere in a dynamical cover between trees of spheres corresponds to a rescaling limit. We  recover as a byproduct a result of Jan Kiwi regarding the bound on the number of rescaling limits that are not post-critically finite. 

\end{abstract}


\section{Introduction}

The goal of this article is to introduce and study a class of dynamical systems on trees of
spheres. First, we examine the trees of spheres that arise from a special case of the Deligne-Mumford compactification of the moduli space of stable curves, in genus 0 with marked
points \cite{DM}. (See Figure \ref{ex1} and Definition \ref{TOFdef}; see also \cite{FT} and \cite{A1}.) Then, we introduce
covers between trees of spheres; the novelty here is that we impose certain restrictions on
the maps (Definition 	\ref{Defportrait}) that allow for interesting properties such as the existence of
a global degree and a Riemann-Hurwitz type formula. Finally, we introduce dynamical
systems between trees of spheres. These are covers between trees of spheres
$$\F:\T^Y\to\T^Z,$$
together with the data of a compatible subtree $T^X$ whose vertices lie in both $T^Y$ and $T^Z$ (see
Definition \ref{compdef}). The definition allows for the iteration of the map $\F$. The collection of such
systems includes rational maps on the Riemann sphere 
$$f:\S\to\S$$
with a finite collection of marked points. We are interested in applying this theory to the
study of rational maps and their moduli spaces.

The objects just mentioned also exist in a topological category. Our first result holds in
this more flexible setting:

\begin{theoremint} \label{thmkiw1}A dynamical system of topological trees of spheres of degree $D$ has at most $2D-2$ cycles of spheres which are not post-critically finite. 
\end{theoremint}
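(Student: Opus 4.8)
The plan is to turn the statement into a counting of critical points, matching the bound $2\deg(\F)-2$ to the total interior critical multiplicity of the cover supplied by Riemann--Hurwitz.

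First I would record the budget. By the degree theory of covers of trees of spheres developed in Section~\ref{chap2}, each ramified cover $f_v\colon \mathcal{S}_v\to\mathcal{S}_w$ satisfies $\sum_{c\in\mathcal{S}_v}(\deg_c f_v-1)=2\deg(f_v)-2$, and the global degree controls how these contributions assemble along the fibres of $F$. Call a critical point $c$ of $f_v$ \emph{interior} if it is not an attaching point, i.e. $c\notin X_v=i_v(E_v)$. The key quantitative input I would isolate is the global Riemann--Hurwitz inequality for $\F$: the total interior critical multiplicity $\sum_{v}\sum_{c\in\mathcal{S}_v\setminus X_v}(\deg_c f_v-1)$ is at most $2\deg(\F)-2$. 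Morally this holds because interior critical points are precisely the critical points not absorbed into the edge (node) structure of the tree, so the tree merely redistributes the total ramification $2\deg(\F)-2$ among the spheres, with the ramification carried by the edges subtracted off.

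Next I would produce, for each non post-critically finite critical cycle $(\mathcal{S}_{v_k})_{k\in\Z/p\Z}$, a distinguished interior critical point lying on one of its spheres. By definition such a cycle contains a critical point $c$ of some $f_{v_k}$ whose forward orbit is infinite. The decisive observation is that $c$ must be interior: if $c=i_{v_k}(e)$ were an attaching point, then the relation $f_v\circ i_v=i_w\circ F$ forces every iterate of $c$ to be again an attaching point, so the whole orbit of $c$ would lie in the finite set $\bigcup_{j}X_{v_j}$ of attaching points of the (finitely many) spheres of the cycle, contradicting infiniteness of the orbit. Hence each non post-critically finite critical cycle carries at least one interior critical point situated on one of its own spheres.

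Finally I would assemble the injection. Distinct cycles of the dynamical system are disjoint as sets of vertices, so the spheres involved, and therefore the interior critical points selected above, are pairwise distinct; this yields an injection from the set of non post-critically finite critical cycles into the interior critical points of $\F$ counted with multiplicity. Combined with the budget from the first step, the number of such cycles is at most $2\deg(\F)-2$. I expect the main obstacle to be the first step, namely establishing the global Riemann--Hurwitz inequality for covers of trees of spheres—controlling how the per-sphere ramification $2\deg(f_v)-2$ assembles along the fibres of $F$ and confirming that the portion carried by the edges is exactly what is needed to bring the interior total down to $2\deg(\F)-2$. The remaining steps, infinite orbit $\Rightarrow$ interior and disjointness of cycles, are comparatively soft topological arguments.
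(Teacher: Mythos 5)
There is a genuine gap, and it sits at the heart of your argument: in this formalism there are no ``interior'' critical points at all. By the very definition of a cover between trees of spheres, the restriction $f_v:\St_v-Y_v\to\St_{F(v)}-Z_{F(v)}$ is an (unramified) cover, so \emph{every} critical point of $f_v$ lies in the set $Y_v=i_v(E_v)$ of attaching points. Hence your budget $\sum_{v}\sum_{c\in\St_v\setminus Y_v}(\deg_c f_v-1)$ is identically zero, and your second step --- producing an interior critical point on each non post-critically finite cycle --- can never succeed. The argument you give for that step is also invalid, for a reason worth internalizing: the relation $f_v\circ i_v=i_w\circ F$ sends attaching points of edges of $T^Y$ to attaching points of edges of $T^Z$, and these are \emph{different} trees. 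The map $f_{F(v)}$ is only constrained on $Y_{F(v)}$; it does nothing special on points of $Z_{F(v)}\setminus Y_{F(v)}$. So the forward orbit of a $Y$-attaching point need not consist of attaching points, and this is exactly how a critical point (which is always a $Y$-attaching point) can have an infinite orbit. If instead you read ``interior'' as ``not an attaching point of the compatible tree $T^X$,'' then your second step can be arranged, but the budget fails: only the critical multiplicity carried by the \emph{leaves} of $T^Y$ is bounded by $2\deg(\F)-2$ (Corollary \ref{ptcrit}); the multiplicity carried by edges joining internal vertices is unbounded (a chain of $m$ spheres, each mapping with degree $2$ onto the next level, carries $2m-2$ critical points on its internal edges while $2\deg(\F)-2=2$).

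The paper's proof has to work harder precisely because of this. Given a critical point $c$ with infinite orbit, it follows the orbit $c_k=f^k(c)$ up to the first time $k_0$ at which $c_{k_0}$ is no longer an attaching point of $\T^Y$ (such a time exists because the orbit is infinite and the attaching points are finite in number), builds from the branches $B^Y_k$ attached to the $c_k$ and their pullbacks $\widetilde B_k=B_k^Y\cap F^{-1}(B_{k+1}^Z)$ an open set $B_c$, and uses the Riemann--Hurwitz formula for sub-trees (Proposition \ref{RH} combined with Lemma \ref{branch}) to show that $B_c$ must contain a critical \emph{leaf} of $T^Y$; a separate lemma shows that the sets $B_c$ associated to distinct infinite critical grand orbits are disjoint. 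The bound then follows from the count of critical leaves with multiplicity, which is $2\deg(\F)-2$. In other words, the object to count is critical leaves captured by dynamically defined branches of the combinatorial tree, not critical points sitting in the open spheres; your soft dichotomy cannot be repaired without essentially reconstructing that branch argument.
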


Recall that a rational map is post-critically finite if each of its critical points has finite forward orbit; the definition is similar for a periodic cycle of spheres within a tree-of-spheres
dynamical system (see Definition \ref{deftriviall}).

For applications, we require the topology on the space of trees of spheres and
dynamical systems between trees of spheres as introduced in \cite{A} and \cite{A1}. For this, from Section \ref{chap4}, we consider all objects in an analytic category. In this article we make precise three notions of convergence:

\begin{enumerate}
\item the convergence of a sequence of marked spheres to a tree of spheres.     
\item the convergence of a sequence of marked spheres covers to cover between trees of spheres.
\item the convergence of a sequence of dynamical systems of marked spheres to a dynamical system between trees of spheres. 
\end{enumerate}
The associated topology is studied in detail in \cite{A1} and is not Hausdorff. 

A sequence of rational maps $f_n : \S \to\S$ can converge to a dynamical system on a
tree of spheres. Periodic cycles of the limiting system correspond to the ``rescaling limits"
introduced by Jan Kiwi \cite{Kiwi2}, defined as:

\begin{definitionint}
For a sequence of rational maps $(f_n)_n$ of a given degree, a {rescaling} is a sequence of Moebius transformations $(M_n)_n$ such that there exist $k\in\N$ and a rational map $g$ of degree $\geq 2$ such that
\[M_n\circ f_n^k\circ M_n^{-1}\to g\]
uniformly in compact subsets of $\S$ with finitely many points removed. Such a $g$ is called a rescaling limit.

\end{definitionint}

Jan Kiwi introduced a notion of dynamical dependence among rescalings (see Section
\ref{indepresclim}). We prove in Theorem \ref{lienkiwi0} that disjoint periodic cycles of spheres for a dynamical system between trees of spheres, which is a limit of rational maps, correspond to a dynamically independent rescalings. We prove the converse in Theorem \ref{intro3}, and we obtain a new
proof of Kiwi's result:

\begin{theoremint}\label{alpha} \cite{Kiwi2} For every sequence in $\Rat_d$ for $d\geq 2$, there are at most $2d-2$ classes of dynamically independent rescalings with a non post-critically finite rescaling limit. 
\end{theoremint}
Kiwi's proof uses the analysis and dynamics of rational maps on a Berkovich space, associated to the completion of the field of formal Puiseux series in a variable t with complex coefficients, equipped with
the non-archimedean absolute value measuring the order of vanishing at $t = 0$.

The objects introduced in this paper have appeared in various forms in the literature.
See, for example:
\begin{enumerate}
\item in \cite{Arno}, \cite{HK}, \cite{Ko} and \cite{S} in the context of application of Thurston's results concerning the characterization of post-critically finite topological branched covers that are realizable as rational maps and the study of related Teichmüller spaces;
\item in  \cite{Treesph}, \cite{S1} and \cite{S2} where the authors encode dynamical systems by means of suitably associated trees;
\item in the use of Berkovich spaces in the context of holomorphic dynamics such as  \cite{CubPol2},
\cite{XetDem} and \cite{Kiwi2}.
\end{enumerate}

The list is not exhaustive. Our main goal in this article is to provide a systematic study of
these trees of spheres and maps between them. Article \cite{A2} continues the study of rescaling
limits; article \cite{A1} is focused on the isomorphism classes of the objects introduced here and
the natural topology on the associated spaces. These three articles were developed from
the results of \cite{A}.

\medskip
\paragraph{\bf Outline} In Section \ref{chap2}, we focus on the description of non dynamical objects. We recall some vocabulary about trees, introduce covers and prove in particular the Riemann-Hurwitz formula (Proposition \ref{RH}).
In Section \ref{chap3}, we introduce dynamics and prove Theorem \ref{npcf} which is a stronger version of Theorem \ref{thmkiw1}.

In Section \ref{chap4}, we define the notions of marked spheres, cover between marked spheres and dynamical systems between trees of spheres. We define the convergence notion cited above and prove some technical lemmas about them. In Section \ref{chap5}, we recall the definitions concerning rescaling limits from \cite{Kiwi2} and prove Theorem \ref{lienkiwi0} and Theorem \ref{intro3}. In the proofs of these theorems, we require
two compactification results from \cite{A1} and \cite{FT}; these are recalled as Theorem \ref{thmcomp} and
Theorem \ref{thmcomp00}.

\medskip
\paragraph{\bf Acknowledgments.}
I would like to thank my PhD advisor Xavier Buff for all the time he spent with me in order to transform an idea into a paper. I also want to thank the referee for all the time spent on this paper, and help offered.

\section{Non dynamical objects}\label{chap2}

\subsection{Combinatorial trees}


Recall that a (simple undirected) graph is the disjoint union of a finite set $V$, called the set of vertices, and another finite set $E$ consisting of elements of the form $\{v,v'\}$ with distinct $v,v'\in V$ called the set of edges.
We say that $\{ v,v'\}$ is an edge between $v$ and $v'$. For all $v\in V$ we define $E_v$ to be the set of edges containing $v$.
The cardinal of $E_v$ is called {\it valence} of $v$ and denoted by $\val(v)$.

In a graph $T$, a path is a one-to-one map $t:[1,k]\to T$ such that for $j\in [1,k-1]$,
\begin{enumerate}
\item if $t(j)$ is a vertex, then $t(j+1)$ is an edge and $t(j+1)\in E_{t(i)}$ and 
\item if $t(j)$ is an edge, then $t(j+1)$ is a vertex and $t(j)\in E_{t(j+1)}$. 
\end{enumerate}
 We say that this path connects $t(1)$ to $t(k)$. We will identify a path and its image. 
 We say that a path is connected if each vertex is connected to any other distinct one.

For a graph $T$, a cycle is a one-to-one map $t:\Z/k\Z\to T$ such that for  $j\in\Z/k\Z$, the following holds:
\begin{enumerate}
\item if $t(j)$ is a vertex, then $t(j+1)$ is an edge and $t(j+1)\in E_{t(i)}$ and 
\item if $t(j)$ is an edge, then $t(j+1)$ is a vertex and $t(j)\in E_{t(j+1)}$. 
\end{enumerate}

\begin{definition}[Tree] A tree is a connected graph without any cycle. 
\end{definition}

See for example Figure \ref{figure1}. If a graph has no cycles then it is well known that there is always a unique path connecting two distinct vertices (see for example \cite[Theorem 1.5.1]{graphtheory}). 
For a tree $T$ we will denote by $[v_1,v_2]$ the unique path of $T$ connecting $v_1$ to $v_2$. 
 
 The path $t$ will be denoted sometimes by $[t(1),t(3),t(5),\ldots,t(k))]$ if $t(1)$ and $t(k)$ are vertices, or $]t(2),t(4),t(6),\ldots,t(k-1))[$ if $t(1)$ and $t(k)$ are edges.

A connected sub-graph of a tree $T$ is a connected graph without cycles. This is also a tree and we say that it is a sub-tree of $T$.

In a tree, vertices with valence $1$ are called leaves. The other ones are called internal vertices. We denote by $IV$ the set of Internal Vertices.

 \begin{figure}
\centerline{\includegraphics[width=8cm]{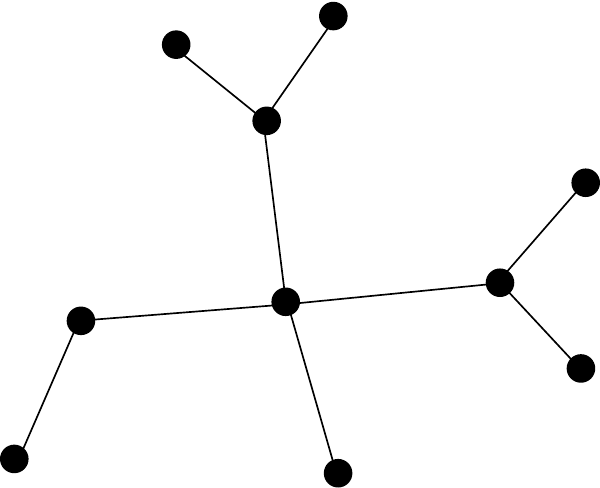}}
 \caption{On this example, vertices are represented as dots and we represent by a segment an edge connecting the corresponding vertices. }
\label{figure1} \end{figure}

 

A graph $T$ has a natural topology such that the closed sets are unions of sub-trees. 

\begin{definition}[Connected component]
The connected component of a sub-graph $T'\subset T$ is the connected sub-graph of $T$ that is maximal for the inclusion. 
\end{definition}

\begin{definition}[Branch]
For $v$ a vertex of a tree $T$ and for $\star\in T-\{v\}$, a branch of $\star$ on $v$ is the connected component of $T-\{v\}$ containing $\star$. It is denoted by $B_v(\star)$.
\end{definition}

Let $v\in V$. As $T$ is a tree, for all $\star\in T-\{v\}$, there is a unique path connecting $v$ to $\star$. By definition this path contains a unique edge $e\in E_v$ so each branch on $v$ will be denoted $B_v(e)$ with $e\in E_v$.

 \begin{figure}
  \centerline{\includegraphics[width=8cm]{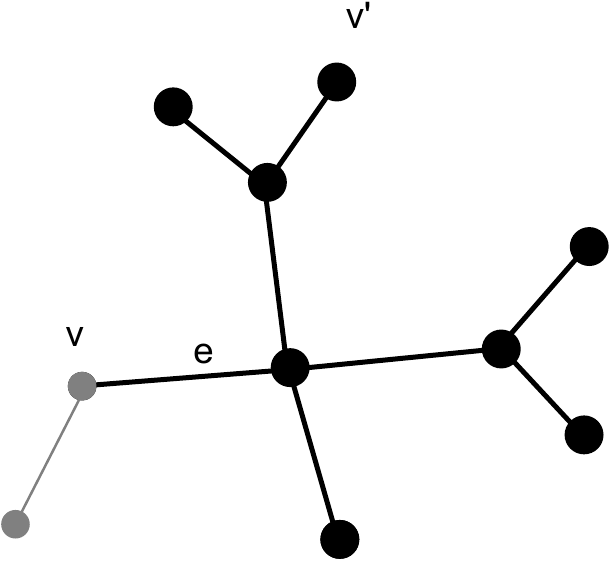}} 
   \caption{ On this example the branch $B_v(e)$ (or $B_v(v')$) is colored dark.}
\label{ex0} \end{figure}



In the following, we introduce a tool called characteristic which is similar to the Euler characteristic and will be useful when we talk as well about covers between trees of spheres. We will have a Riemann-Hurwitz formula.

\begin{definition}[Characteristic of a sub-graph]\label{defcar}
The characteristic of a vertex $v$ of a graph $T$ is \[\chi_T(v):=2-\val(v).\]
The characteristic of a sub-graph $T'$ of $T$ is the integer 
\[\chi_T(T'):=\sum_{v\in V\cap T'} \chi_T(v).\]
\end{definition}

We will simply use the notation $\chi(T')$ when it is not confusing.
Cf Figure \ref{That} for an example.

\begin{lemma}\label{val}
For any tree $T$, we have $\chi_T(T)=2$. 
\end{lemma}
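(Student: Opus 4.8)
The plan is to prove that $\chi_T(T) = 2$ for any tree $T$ by induction on the number of vertices, exploiting the fact that every finite tree with at least two vertices possesses a leaf. The base case is the tree consisting of a single vertex $v$; such a vertex has valence $0$, so $\chi_T(v) = 2 - 0 = 2$, and the formula holds.

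For the inductive step, suppose the result holds for all trees with $n$ vertices, and let $T$ be a tree with $n+1 \geq 2$ vertices. First I would select a leaf $\ell$ of $T$, that is, a vertex with $\val(\ell) = 1$; the existence of such a leaf is a standard fact for finite trees (a longest path must terminate at vertices of valence one). Let $e = \{\ell, v_0\}$ be the unique edge adjacent to $\ell$, and form the subgraph $T'$ obtained by deleting $\ell$ and $e$. The key observation is that $T'$ is again a tree: removing a leaf and its edge cannot disconnect the graph nor create a cycle. Now I would compare characteristics. Passing from $T$ to $T'$ removes the vertex $\ell$, whose contribution to $\chi_T(T)$ is $2 - 1 = 1$, and it lowers the valence of $v_0$ by exactly one, since the only edge lost is $e \in E_{v_0}$. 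All other vertices retain their valence. Hence
\[
\chi_{T'}(T') = \chi_T(T) - \chi_T(\ell) - \bigl(\chi_{T'}(v_0) - \chi_T(v_0)\bigr),
\]
where $\chi_{T'}(v_0) - \chi_T(v_0) = \bigl(2 - (\val(v_0)-1)\bigr) - \bigl(2 - \val(v_0)\bigr) = 1$. Substituting $\chi_T(\ell) = 1$ gives $\chi_{T'}(T') = \chi_T(T) - 1 - 1$, so $\chi_T(T) = \chi_{T'}(T') + 2$. Since $T'$ has $n$ vertices, the induction hypothesis yields $\chi_{T'}(T') = 2$, and therefore $\chi_T(T) = 2$, completing the induction.

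The main obstacle, and the only point requiring genuine care, is verifying that $T'$ is a tree after removing the leaf; this relies on the observation that a leaf lies on no path between two other vertices, so connectivity is preserved, while acyclicity is inherited automatically from $T$. Once this structural fact is in hand, the arithmetic bookkeeping of characteristics is routine: one simply tracks that deleting a leaf decreases the total by the leaf's own characteristic ($+1$) plus the induced drop of $+1$ at its neighbor, totalling $2$. An alternative I would keep in mind, if one prefers a non-inductive argument, is to use the standard relation $\card(E) = \card(V) - 1$ for a tree together with the handshake identity $\sum_{v} \val(v) = 2\,\card(E)$, which immediately gives $\chi_T(T) = \sum_v (2 - \val(v)) = 2\,\card(V) - 2\,\card(E) = 2\,\card(V) - 2(\card(V)-1) = 2$. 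This closed-form computation is arguably cleaner, but it presupposes the edge-count formula for trees, whereas the inductive approach is self-contained.
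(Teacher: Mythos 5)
Your inductive strategy (delete a leaf, induct on the number of vertices) is sound in principle, but the bookkeeping in your inductive step contains a sign error that makes the argument as written incorrect, and in fact internally inconsistent. When you delete the leaf $\ell$, the neighbor $v_0$ loses one edge, so its valence drops by one and its characteristic $2-\val(v_0)$ \emph{increases} by one: $\chi_{T'}(v_0)=\chi_T(v_0)+1$. Summing over the vertices of $T'$ therefore gives
\[
\chi_{T'}(T') \;=\; \chi_T(T) - \chi_T(\ell) + \bigl(\chi_{T'}(v_0)-\chi_T(v_0)\bigr) \;=\; \chi_T(T) - 1 + 1 \;=\; \chi_T(T),
\]
that is, the characteristic is \emph{invariant} under leaf deletion. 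Your displayed identity subtracts the difference $\chi_{T'}(v_0)-\chi_T(v_0)$ instead of adding it, which yields the false recursion $\chi_T(T)=\chi_{T'}(T')+2$. This recursion cannot be right: combined with your own induction hypothesis $\chi_{T'}(T')=2$ it gives $\chi_T(T)=4$, not the $2$ you assert, and iterated from the one-vertex base case it would give $\chi_T(T)=2\,\card(V)$, contradicting for instance the two-vertex tree, where both vertices have characteristic $1$ and the total is $2$. The fix is immediate: with the corrected identity, leaf deletion preserves $\chi$, and induction from the base case gives $\chi_T(T)=2$.

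Separately, the alternative you mention in passing --- the handshake identity $\sum_{v}\val(v)=2\,\card(E)$ combined with the tree relation $\card(E)=\card(V)-1$ --- is precisely the proof given in the paper, and it is correct as you state it. Had you led with that computation, the proof would have been complete; as it stands, your primary argument needs the sign correction above.
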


\begin{proof}
Observe first that on a graph, each vertex $v$ is connected to $\val(v)$ edges and that each edge is connected to two vertices. Then we have
\[\sum_{v\in V} \val(v) = 2\card(E).\]
Moreover, in a tree, we have $\card{V}=\card{E}+1$ (see \cite[corollary 1.5.3]{graphtheory} for example). It follows that   
\[\chi_T(T)=\sum_{v\in V} \bigl(2-\val(v) \bigr)= 2\card{V} - 2\card{E} = 2.\]
 \end{proof}

Recall that the closure of a set is the smallest closed set containing it (cf Figure \ref{That}).

\begin{definition}
If $T'\subseteq T$, we denote by 
\begin{enumerate}
\item $\overline T'$ the closure of $T'$  in $T$ and 
\item $\partial_T T':=\overline T'-T'$ the boundary of $T'$ in $T$.
\end{enumerate}
\end{definition}

\begin{lemma}\label{sarbre}
If $T'$ is open an connected in $T$, then the boundary $\partial_T T'$ is the set of vertices $v\in T-T'$ lying to an edge of $T'$. 
The closure $\overline T'$ is a sub-tree of $T$ for which the set of internal vertices is $IV\cap T'$.
\end{lemma}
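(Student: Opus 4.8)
The plan is to work out the topology of $T$ explicitly and then read off all three assertions from the resulting description of the closure operator. Since closed sets are exactly the unions of sub-trees, a subset $C\subseteq T$ is closed if and only if every edge belonging to $C$ has both its endpoints in $C$: such a $C$ is the disjoint union of its connected components, each of which is a connected acyclic sub-graph, hence a sub-tree, and conversely any union of sub-trees has this endpoint-stability property. Dually, $U$ is open if and only if every edge of $T$ incident to a vertex of $U$ already lies in $U$. From the closed-set description the closure operator is immediate: the smallest closed set containing $T'$ is obtained by adjoining to $T'$ the endpoints of the edges of $T'$, so $\overline{T'}=T'\cup\{v\in V:\ v\text{ is an endpoint of some edge of }T'\}$. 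As the only points thereby added are vertices, $\partial_T T'=\overline{T'}-T'=\{v\in T-T':\ v\text{ is incident to an edge of }T'\}$, which is the first assertion.

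For the second assertion I would observe that $\overline{T'}$ is closed by construction and connected because the closure of a connected set is connected. A closed connected subset satisfies the endpoint-stability property above, hence is a genuine sub-graph of $T$; being a sub-graph of a tree it is acyclic, and being connected it is a sub-tree. Note that passing to the closure creates no new edges, so the edges of $\overline{T'}$ are precisely the edges of $T'$, a fact I will reuse below.

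For the last assertion I would compute, for each vertex of $\overline{T'}$, its valence inside $\overline{T'}$, splitting into two cases. If $v\in T'$, openness of $T'$ forces every edge of $T$ incident to $v$ to lie in $T'\subseteq\overline{T'}$, so $\val_{\overline{T'}}(v)=\val_T(v)$ and $v$ is internal in $\overline{T'}$ exactly when $v\in IV$; these vertices contribute precisely $IV\cap T'$. If instead $v\in\partial_T T'$, I claim $v$ is a leaf of $\overline{T'}$, and this is the step I expect to be the most delicate. Suppose $v$ were incident to two distinct edges $e_1,e_2$ of $T'$; since $T$ is a tree, their far endpoints lie in two distinct branches, i.e.\ $e_1\in B_v(e_1)$ and $e_2\in B_v(e_2)$ are in distinct connected components of $T-\{v\}$. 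As $T-\{v\}$ has only finitely many components, each is open and closed in $T-\{v\}$, and this yields a separation of the connected set $T'\subseteq T-\{v\}$ meeting both $B_v(e_1)$ and $B_v(e_2)$, a contradiction. Hence each boundary vertex is incident to exactly one edge of $T'$ and is a leaf of $\overline{T'}$. Combining the two cases shows that the internal vertices of $\overline{T'}$ are precisely $IV\cap T'$, completing the proof.
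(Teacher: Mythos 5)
Your proof is correct and follows essentially the same route as the paper's: the closure is obtained by adjoining exactly the vertices incident to edges of $T'$, boundary vertices are shown to be leaves of $\overline{T'}$ by a connectivity (separation) argument, and openness of $T'$ preserves the valence of its vertices, giving $IV\cap T'$ as the internal vertices. Your write-up is just a more explicit version of the paper's terser argument (spelling out the characterization of closed sets and the branch-separation step that the paper compresses into ``otherwise $T'=\overline{T}'-\partial_T T'$ would not be connected'').
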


\begin{proof}
The closure of $T'$ is the smallest sub-graph of $T$ containing $T'$. 
It has to contain all vertices  $v\in T$ lying on an edge of $T'$. It is not necessary to add other vertices or other edges in order to obtain a graph. This proves that $\partial_T T'$ is the set of vertices $v\in T-T'$ lying on an edge of $T'$.

The closure of a connected open set is a sub-graph of $T$. So it is a sub-tree of $T$. The vertices of $\partial_T T'$ are the leaves of $\overline T'$. If it is not the case then $T'=\overline T'-\partial_T T'$ would not be connected. 
The set $T'$ is open, so for all vertex $v$ of $T'$, we have $E_v\subset T'$. Consequently the valence of $v$ in $\overline T'$ is the same as the one of $v$ in $T$. 
This proves that internal vertices of $\overline T'$ are internal vertices of $T$ contained in $T'$. 
 \end{proof}

  \begin{figure}
  \centerline{\includegraphics[width=12cm]{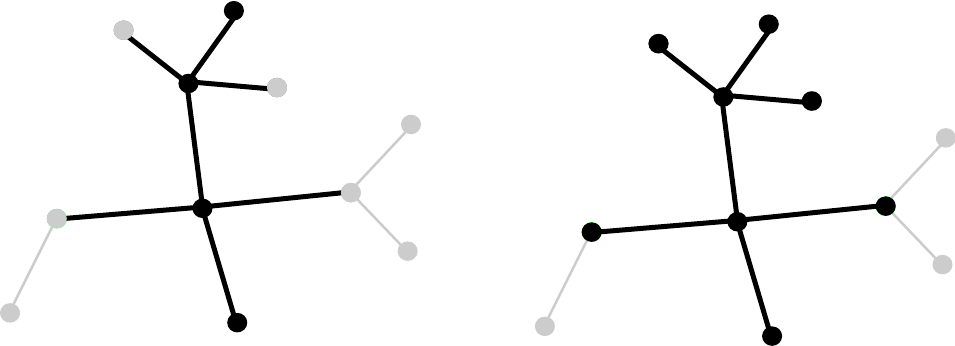}}
   \caption{ An open connected sub-graph of $T'$ in dark of characteristic $-2$ on the left and $\overline T'$ in dark on the right.}\label{That}
 \end{figure}

\begin{lemma}\label{carautre}
If $T'$ is a non empty sub-graph of $T$, open and connected, then 
\[\chi_T(T')=2-\card \partial_T T'.\]
\end{lemma}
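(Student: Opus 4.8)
The plan is to combine the two previous lemmas. By Lemma \ref{sarbre}, since $T'$ is open and connected, its adherence $\overline{T'}$ is a sub-tree of $T$, and the boundary $\partial_T T'=\overline{T'}-T'$ consists precisely of the leaves of $\overline{T'}$ that were added to $T'$. I would therefore express $\chi_T(T')$ as the characteristic of the full sub-tree $\overline{T'}$ minus the contribution of the boundary vertices, and then evaluate the characteristic of $\overline{T'}$ using Lemma \ref{val}.

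The first step is to write
\[
\chi_T(T')=\sum_{v\in V\cap T'}\chi_T(v)=\sum_{v\in V\cap \overline{T'}}\chi_T(v)-\sum_{v\in\partial_T T'}\chi_T(v),
\]
which is just the additivity of the characteristic together with the decomposition $V\cap\overline{T'}=(V\cap T')\sqcup\partial_T T'$ coming from Lemma \ref{sarbre}. The key point in this step is that the characteristic appearing here is taken with respect to the valences in $T$, i.e. $\chi_T(v)=2-\val_T(v)$, so I must be careful that the sum over $v\in V\cap \overline{T'}$ of $\chi_T(v)$ is \emph{not} the intrinsic characteristic of $\overline{T'}$ as an abstract tree, because the valences differ at the boundary vertices.

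The second step is the crux, and it is where I expect the main subtlety to lie: I want to relate $\sum_{v\in V\cap\overline{T'}}\chi_T(v)$ to $\chi_{\overline{T'}}(\overline{T'})=2$ (by Lemma \ref{val} applied to the tree $\overline{T'}$). For an internal vertex $v$ of $\overline{T'}$, Lemma \ref{sarbre} guarantees $v\in T'$ and $E_v\subset T'$, so $\val_T(v)=\val_{\overline{T'}}(v)$ and the characteristics agree. For a boundary vertex $v\in\partial_T T'$, however, $v$ is a leaf of $\overline{T'}$, so $\val_{\overline{T'}}(v)=1$ and $\chi_{\overline{T'}}(v)=1$, whereas $\chi_T(v)=2-\val_T(v)$ may be different. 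Thus
\[
\sum_{v\in V\cap\overline{T'}}\chi_T(v)=\chi_{\overline{T'}}(\overline{T'})+\sum_{v\in\partial_T T'}\bigl(\chi_T(v)-1\bigr)=2+\sum_{v\in\partial_T T'}\bigl(1-\val_T(v)\bigr).
\]

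Finally I would substitute this into the expression from the first step. The boundary terms combine as
\[
\chi_T(T')=2+\sum_{v\in\partial_T T'}\bigl(1-\val_T(v)\bigr)-\sum_{v\in\partial_T T'}\bigl(2-\val_T(v)\bigr)=2-\sum_{v\in\partial_T T'}1=2-\card\,\partial_T T',
\]
which is the claimed formula. The main obstacle is purely bookkeeping: keeping the two notions of valence/characteristic (relative to $T$ versus relative to $\overline{T'}$) distinct and correctly accounting for their difference exactly at the boundary leaves; once that is handled, the telescoping of the $\val_T(v)$ terms is automatic and the count reduces to the cardinality of $\partial_T T'$.
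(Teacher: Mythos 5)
Your proof is correct and follows essentially the same route as the paper: both apply Lemma \ref{val} to the sub-tree $\overline{T'}$ (via Lemma \ref{sarbre}) and split the vertex sum into the vertices of $T'$, whose valences in $\overline{T'}$ agree with those in $T$, and the boundary vertices, which are leaves of $\overline{T'}$ of characteristic $1$. Your version merely makes the bookkeeping between $\chi_T$ and $\chi_{\overline{T'}}$ more explicit before the cancellation, which the paper's one-line computation handles implicitly.
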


\begin{proof} 
In $\overline T'$, each vertex $v$ of $T'$ has valence $\val(v)$ and each vertex of $\partial_T T'$ has characteristic $1$. According to Lemma \ref{val}, we conclude that
\[2 = \chi_T(\overline T') = \sum_{v\in V\cap T'}\chi_T(v) + \sum_{v\in \partial_T T'} \chi_T(v) = \chi_T(T') + \card \partial_T{T'}.\]
 \end{proof}

\begin{lemma}\label{branch}
If $T'$ is a non empty sub-graph of $T$, open and connected, then 
\begin{enumerate}
\item $\chi_T(T')\leq 2$ ;
\item $\chi_T(T')=2$ iff $T'=T$ ;
\item $\chi_T(T')=1$ iff $T'$ is a branch of $T$. 
\end{enumerate}
\end{lemma}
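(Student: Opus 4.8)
The whole statement rests on the formula $\chi_T(T')=2-\card\partial_T T'$ of Lemma \ref{carautre}, which applies because $T'$ is non-empty, open and connected. The plan is therefore to translate each of the three claims about $\chi_T(T')$ into an equivalent statement about the integer $\card\partial_T T'\geq 0$, and then to read the geometry of $T'$ off the size of its boundary.

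Claims (1) and (2) I expect to be immediate. For (1), since $\card\partial_T T'\geq 0$ the formula yields $\chi_T(T')\leq 2$. For (2), $\chi_T(T')=2$ is equivalent to $\partial_T T'=\emptyset$, i.e. to $\overline{T'}=T'$, so that $T'$ is simultaneously open and closed in $T$. As $T$ is connected (being a tree) and $T'\neq\emptyset$, this forces $T'=T$; conversely $\partial_T T=\emptyset$ and $\chi_T(T)=2$ by Lemma \ref{val}.

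The substance is in (3), where the formula reduces the claim to showing that $\card\partial_T T'=1$ if and only if $T'$ is a branch. For the easy direction, if $T'=B_v(e)$ is a branch, then $T'$ is the connected component of the open set $T-\{v\}$ containing $e$, and the only vertex lying on an edge of $T'$ but not in $T'$ is $v$; Lemma \ref{sarbre} then gives $\partial_T T'=\{v\}$, so $\card\partial_T T'=1$. For the converse I would start from $\partial_T T'=\{v\}$, use Lemma \ref{sarbre} to locate an edge $e\in E_v$ belonging to $T'$, and observe that, being connected, avoiding $v$, and containing $e$, the set $T'$ is contained in the branch $B_v(e)$.

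The main obstacle is upgrading this inclusion $T'\subseteq B_v(e)$ to an equality. Here I would argue by connectedness: $T'$ is open in $T$, hence open in the connected subspace $B_v(e)$, so if $T'$ were a proper non-empty subset of $B_v(e)$ it could not also be closed there, and its closure in $B_v(e)$, which is $\overline{T'}\cap B_v(e)$, would contain a point $p\neq v$ lying in $\overline{T'}-T'=\partial_T T'$, contradicting $\partial_T T'=\{v\}$. Hence $T'=B_v(e)$ is a branch. The only delicate points to verify are that branches are genuinely open and connected in the graph topology, so that the connectedness argument applies, and that the subspace closure of $T'$ in $B_v(e)$ is indeed $\overline{T'}\cap B_v(e)$.
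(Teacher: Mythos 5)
Your proposal is correct and follows essentially the same route as the paper: both reduce the lemma to $\chi_T(T')=2-\card\,\partial_T T'$ via Lemma \ref{carautre}, and for part (3) both trap $T'$ inside the branch $B_v(e)$ through the unique boundary vertex $v$ and conclude $T'=B_v(e)$ by a connectedness argument on the branch. Your clopen/closure phrasing of that last step is just a cosmetic variant of the paper's decomposition of $B_v(e)$ into the disjoint open sets $T'$ and $B_v(e)-\overline{T'}$.
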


\begin{proof}
According to the previous lemma, $\chi_T(T') = 2-\card\partial_T{T'}$.

1) clear.  

2) $\chi_T(T')=2$ iff $\partial_T T'=\emptyset$ iff $T'$ is open and closed iff $T'=T$. 

3) If $T'$ is a branch, we have a vertex $v$, then $\partial_T T'=\{v\}$ and $\chi_T(T')=1$. Conversely,  if $\chi_T(T')=1$, then $\partial_T T'$ contains a unique vertex $v$. Let $e:=\{v,v'\}$ be the edge of $T'$ containing $v$ and define $B:=B_v(e)$. As $T'$ is connected, contained in $T-\{v\}$ and contains $e$, we have $T'\subseteq B$. Given that $T'\cap B=\emptyset$, the branch $B$ is the disjoint union of two open sets $T'$ and $B-\overline T' = B-T'$. As $B$ is connected, we have $B-T'=\emptyset$ and it follows that $B=T'$. 

 \end{proof}


\newpage

\subsection{Combinatorial trees maps}

\begin{definition}[Trees map]   A map $F:T\to T'$ is a trees map if
\begin{enumerate}
\item $T$ and $T'$ are trees;
\item vertices map to vertices : $F(V)\subseteq V'$;
\item every edge connecting two vertices maps to an edge connecting the image of these vertices : if $\{v,w\}\in E$, then $F(\{v,w\}) = \{F(v),F(w)\}\in E'$.
\end{enumerate} 
\end{definition}

For an example of such a map see Figure \ref{ex1}.

Observe that if $U$ is a sub-graph of $T$, then $F(U)$ is a sub-graph of $T'$ and, conversely, if $U'$ is a sub-graph of $T'$, then $F^{-1}(U')$ is a sub-graph of $T$. Particularly, the preimage of closed sets are closed:

 \begin{figure}
 \centerline{\includegraphics[width=10.15cm]{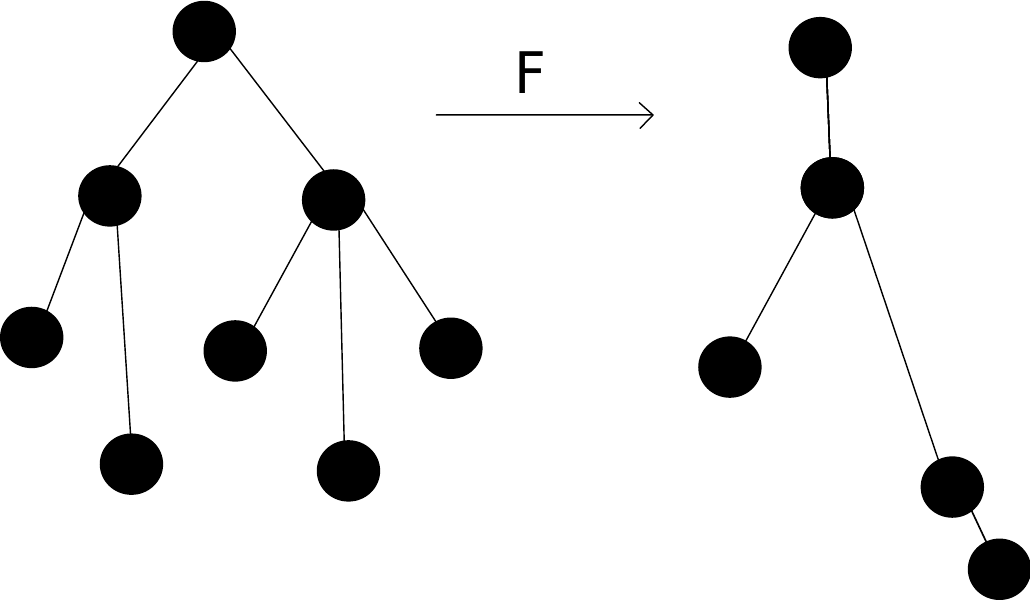}}
   \caption{Example of a trees map: the image of a vertex is the vertex at the same horizontal level. In this example the map is not surjective.}\label{ex1} \end{figure}

\begin{proposition} \label{imageconnexe}
Trees maps are continuous and the image of a sub-tree is a sub-tree.
\end{proposition}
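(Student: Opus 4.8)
The plan is to prove the two assertions of Proposition \ref{imageconnexe} separately, relying on the characterization of the topology of a tree given earlier: closed sets are exactly the unions of sub-trees, so a map is continuous if and only if the preimage of every sub-tree is a union of sub-trees. I would first dispose of continuity. As already observed in the remark preceding the statement, for a trees map $F:T\to T'$ and any sub-graph $U'\subseteq T'$, the preimage $F^{-1}(U')$ is a sub-graph of $T$: this is immediate from the definition, since $F$ sends vertices to vertices and edges to edges compatibly. Since closed sets in $T'$ are unions of sub-trees, and the preimage of a sub-graph is a sub-graph, the preimage of any closed set is a sub-graph, hence closed. This gives continuity directly from the definition of the topology.

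For the second assertion, let $S$ be a sub-tree of $T$; I must show $F(S)$ is a sub-tree of $T'$, i.e.\ that it is connected and without cycle. That $F(S)$ is a sub-graph is part of the earlier observation. Connectedness is the heart of the matter. The clean way is to argue via paths: take two vertices $w_1,w_2$ in $F(S)$, with $w_i=F(v_i)$ for some vertices $v_i\in S$. Since $S$ is a sub-tree it is connected, so there is a path $[v_1,v_2]$ in $S$ connecting $v_1$ to $v_2$, say passing through consecutive vertices $v_1=u_0,u_1,\dots,u_m=v_2$ where each $\{u_j,u_{j+1}\}$ is an edge of $S$. Applying $F$, each $\{F(u_j),F(u_{j+1})\}$ is an edge of $T'$ (by the trees map axiom), unless $F(u_j)=F(u_{j+1})$, in which case that step is collapsed. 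Deleting the collapsed steps, the images $F(u_0),F(u_1),\dots,F(u_m)$ trace out a connected walk in $F(S)$ from $w_1$ to $w_2$. Hence any two vertices of $F(S)$ are joined by a walk inside $F(S)$, so $F(S)$ is connected.

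Finally, $F(S)$ is a connected sub-graph of the tree $T'$, and a connected sub-graph of a tree contains no cycle (a cycle in $F(S)$ would be a cycle in $T'$, contradicting that $T'$ is a tree). Therefore $F(S)$ is a connected graph without cycle, i.e.\ a sub-tree of $T'$, as claimed.

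The main subtlety I expect is the bookkeeping around collapsed edges: because a trees map need not be injective, an edge $\{u_j,u_{j+1}\}$ of $S$ may be sent to a single vertex when $F(u_j)=F(u_{j+1})$, so the literal image of a path need not be a path. The fix is precisely to pass from the notion of a path to that of a walk (a connected sequence of vertices and edges, allowing repetitions), and to observe that deleting the degenerate steps still leaves a connected walk inside $F(S)$ joining the two chosen vertices; connectedness of a sub-graph only requires the existence of such walks, so this suffices. Everything else is a direct unwinding of the definitions of trees map and of the tree topology.
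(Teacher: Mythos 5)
Your proof is correct and follows essentially the same route as the paper: continuity comes from the preceding observation that preimages of sub-graphs are sub-graphs, and the image of a sub-tree is a connected sub-graph of the tree $T'$, hence a sub-tree --- the paper's entire proof is the one-liner ``a connected set maps to a connected one,'' which your walk argument simply makes explicit. One small remark: the collapsed-edge case you guard against cannot actually occur, since the trees map axiom requires $F(\{v,w\})=\{F(v),F(w)\}$ to be an edge of $T'$ and a tree has no loops, so $F(v)\neq F(w)$ whenever $\{v,w\}$ is an edge; the image of a path can still backtrack, so passing to walks is the right fix for connectedness.
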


\begin{proof} 
A connected set maps to a connected one. \end{proof}


\subsection{Trees of spheres}

From now on, $X$, $Y$ and $Z$ denote finite sets with at least 3 elements.

\begin{definition}[Marked tree]
A tree $T$ marked by $X$ is a tree such that the leaves are the elements of $X$.
 \end{definition}

A \tmb\,$X$ will be denoted by $T^X$. Later in this paper we will consider many trees that are marked by different sets. When an object $O$ is associated with a tree marked by a set $X'$, we will denote it by $O^{X'}$ to help the reading. 
For example, the set of edges of $T^X$ will be denoted by $E^X$ instead of simply $E$.

\begin{definition} [Marked tree of spheres]\label{TOFdef}
A (topological) \ts ${\cal T^X}$ (marked by $X$) is  the data of: 
\begin{enumerate}
\item a combinatorial tree $T^X$ and 
\item for every internal vertex $v$ of $T^X$, 
\begin{enumerate}
    \item a $2$-dimensional topological sphere $\St_v$ and
    \item a one-to-one map $i_v:E_v\to \St_v$. 
    \end{enumerate}
\end{enumerate}
\end{definition}

For $e\in E_v$, we say that $i_v(e)$ is the {\it attaching point} of $e$ on $v$. We will often use the notation $e_v := i_v(e)$ sometimes even $i_v(v'):=e_v$ if $v'\in B_v(e)$.
We define $X_v : =i_v(E_v)$ to be the set of attaching points on the sphere ${\cal S}_v$.

 \begin{remark} Giving a one-to-one map $i_v:E_v\to S_v$, is the same as giving a map
  $a_v:X\to S_v$ such that $a_v(x_1)=a_v(x_2)$ if and only if $x_1$ and $x_2$ are in the same corresponding branch of $v$. This means $a_v(x) := i_v(e)$ if $x$ lies in $B_v(e)$. 
   \end{remark}
 
\begin{example}[Marked spheres]
A \ts marked by $X$ with a unique internal vertex $v$ is the same data as this vertex $v$ and the map $i_v$. We call it a marked sphere or a sphere marked by $X$.
\end{example} 

 \begin{figure}
   \centerline{\includegraphics[width=7.15cm]{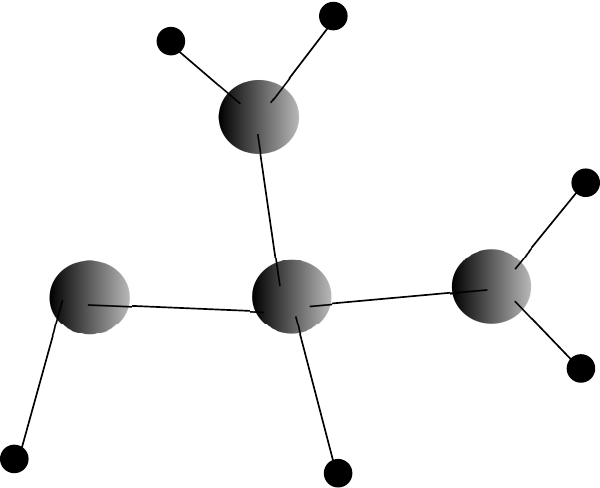}}
   \caption{ A \ts having the same combinatorial tree as the one on Figure \ref{ex0}.}
 \label{Fig5}\end{figure}


\subsection{Covers between trees of spheres}

\paragraph{Definitions and degree}
A cover between trees of spheres is the extension to trees of spheres of the notion of combinatorial trees cover. We add the data of a branched cover for each internal vertex and require that the branching locus is contained in the set of the edges attaching points.

\begin{definition} [Cover]
A cover between trees of spheres  ${\cal F}:{\cal T}^Y\to {\cal T}^Z$ is the following data : 
\begin{enumerate}
\item a trees map $F:T^Y\to T^Z$ mapping leaves to leaves and internal vertices to internal vertices ( $F(Y)\subseteq Z$ and $F(IV^Y)\subseteq IV^Z$ ) and
\item for every internal vertex $v\in IV^Y$ and $w:=F(v)\in IV^Z$, 
a topological branched covering $f_v: \St_v\to \St_w$ such that
\begin{enumerate}
\item the restriction $f_v: \St_v-Y_v\to \St_w-Z_w$ is a cover;
\item $f_v\circ i_v=i_w\circ F$ on $E_v$;
\item if $e=\{v_1,v_2\}\in E^Y$ is an edge connecting two internal vertices, 
then ${\rm deg}_{e_{v_1}} f_{v_1} = {\rm deg}_{e_{v_2}} f_{v_2}$.  
\end{enumerate}
\end{enumerate}
\end{definition}


For every internal vertex $v\in IV^Y$, we denote $\deg(v):=\deg(f_v)$ in order to simplify the expression. 
As well, for all $x\in \St_v$ we define $\deg(x):=\deg_xf_v$.
 The condition 3 assures that we can define a degree for every edge $e$ connecting two internal vertices $v_1$ and $v_2$ of $T^Y$, that will be denoted by \[\deg(e):=\deg_{e_{v_1}} f_{v_1} = \deg_{e_{v_2}} f_{v_2}.\]

Each leaf $y\in Y$ is connected to a unique internal vertex $v$ by an edge $e$, so we can define  \[\deg(y):=\deg(e):=\deg_{e_v} f_v.\]
This define a degree map $\deg_{F}|_Y$ for the map $F|_Y:Y\to Z$. We will see that $(F|_Y,\deg_{F}|_Y)$ is a portrait in the following sense.

\begin{definition}\label{Defportrait}
A portrait ${\bf F}$ of degree $d\geq 2$ is a pair $(F,\deg)$ where 
\begin{enumerate}
\item $F:Y\to Z$ is a map between finite sets $Y$ and $Z$ and
\item $\deg:Y\to \N-\{0\}$ is a map that verifies
\[\sum_{a\in Y}\bigl(\deg(a) -1\bigr) = 2d-2\quad\text{and}\quad \sum_{a\in F^{-1}(b)} \deg(a) = d\quad\text{ for all } b\in Z.\] 
\end{enumerate}
\end{definition}

\begin{definition}
A critical vertex (resp. critical leaf) of $\F$ is a vertex of $T^Y$ (resp. a leaf $y\in Y$) having degree more than one.
We then define $\mult (y):=\deg (y)-1$ to be the multiplicity of $y$. We denote by $\Crit \F$ the set of critical leaves of $\F$. 
 \end{definition}

For each vertex $v$ of $T^Z$ and each leave $e$ of $T^Z$ we can define
\[D_v:=\sum_{v'\in F^{-1}(v)} {\rm deg}(v')\quad\text{and}\quad 
D_e:=\sum_{e'\in F^{-1}(e)} {\rm deg}(e').
\]

\begin{lemma} If $e\in E_v$, then $D_e=D_v$. \label{defdeg}
\end{lemma}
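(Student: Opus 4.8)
The plan is to compute both $D_v$ and $D_e$ by counting, with multiplicity, the preimages of a single point. I would fix the attaching point $e_v := i_v(e)\in\St_v$ and, for each internal vertex $v'\in F^{-1}(v)$, examine its fibre under the ramified cover $f_{v'}:\St_{v'}\to\St_v$, which has degree $\deg(v')=\deg(f_{v'})$.

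First I would locate the fibre. Since condition~1 makes $f_{v'}:\St_{v'}-Y_{v'}\to\St_v-Z_w$ a genuine cover, no point outside $Y_{v'}$ maps into $Z_w$; as $e_v\in Z_w$ (here $w=v$ viewed in $T^Z$), the whole fibre $f_{v'}^{-1}(e_v)$ lies in $Y_{v'}=i_{v'}(E_{v'})$. Using condition~2, namely $f_{v'}\circ i_{v'}=i_v\circ F$, together with the injectivity of $i_v$, an attaching point $i_{v'}(e')$ belongs to the fibre exactly when $F(e')=e$. Counting preimages with multiplicity, and recalling that the local degree of $f_{v'}$ at $i_{v'}(e')$ is by definition $\deg(e')$, would give
$$\deg(v')=\sum_{\substack{e'\in E_{v'}\\ F(e')=e}}\deg(e').$$
Summing over all $v'\in F^{-1}(v)$ then expresses $D_v$ as a double sum over pairs $(v',e')$ with $v'\in F^{-1}(v)$, $e'\in E_{v'}$ and $F(e')=e$.

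The last step, and the only place I expect to need care, is to identify this double sum with $D_e=\sum_{e'\in F^{-1}(e)}\deg(e')$ via the reindexing $(v',e')\mapsto e'$. I would argue this is a bijection onto $F^{-1}(e)$: since $F$ is a trees map, any $e'=\{v_1',v_2'\}$ with $F(e')=e$ satisfies $\{F(v_1'),F(v_2')\}=e=\{v,w\}$ with $v\neq w$, so exactly one endpoint of $e'$ maps to $v$. This determines $v'$ uniquely (injectivity) and shows every $e'\in F^{-1}(e)$ arises (surjectivity), so each edge of $F^{-1}(e)$ is counted exactly once; hence $D_v=D_e$. Finally I would dispose of the degenerate case in which $v$ is a leaf of $T^Z$, where there is no sphere $\St_v$ and the fibre argument does not apply: here the same bijection between $F^{-1}(v)$ and $F^{-1}(e)$ matches each leaf $v'$ to its unique incident edge $e'$, and $\deg(v')=\deg(e')$ holds by the definition of the degree of a leaf, so the identity persists.
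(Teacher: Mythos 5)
Your proof is correct and follows essentially the same route as the paper's: both identify the fibre $f_{v'}^{-1}(e_v)$ with the attaching points of edges $e'$ satisfying $F(e')=e$ (using the cover condition and $f_{v'}\circ i_{v'}=i_v\circ F$), then sum local degrees over each preimage sphere to convert $D_v$ into $D_e$, treating the case where $v$ is a leaf separately. Your treatment is if anything slightly more explicit than the paper's about why $(v',e')\mapsto e'$ is a bijection onto $F^{-1}(e)$, but the underlying counting argument is the same.
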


\begin{proof} If $v$ is a leaf, then preimages  $v'$ of $v$ are leaves on which there are attached preimages $e'$ of $e$. The lemma is clear because by definition we have ${\rm deg}(v')= {\rm deg}(e')$. 

When $v$ is an internal vertex, let $X$ be the set of points $x$ lying in the sphere $\St_{v'}$ with $F(v')=v$ and $f_{v'}(x) = e_v$. 
Let $x\in X$. Given that $e_v\in Z_v$ and that ${f_{v'}:\St_{v'}-Y_{v'}\to \St_v-Z_v}$ is a cover, then $x\in Y_{v'}$. Consequently, $x$ is the attaching point of an edge $e'$ of $T^Y$ mapped to $e$. Inversely, if $F(e')=e$, then $e'$ is attached to a sphere $v'\in F^{-1}(v)$ at a point $x\in X$. So we have
\begin{align*}
D_e = \sum_{e'\in F^{-1}(e)} {\rm deg}(e') &= \sum_{x\in X}  {\rm deg}(x) \\
&= \sum_{v'\in F^{-1}(v)} \sum_{x\in f_{v'}^{-1}(e_v)} {\rm deg}(x)= \sum_{v'\in F^{-1}(v)}  {\rm deg}(v') = D_v.\end{align*}
 \end{proof}

Thus if $e$ is an edge connecting two vertices $v$ and $w$, then $D_v = D_e = D_w$. 
This number is constant, because the tree $T^Z$ is connected. It does not depend on $e$ neither on $v$. We denote by $D$ this number and call it the degree of ${\cal F}$. 

 \begin{figure}
 \centerline{\includegraphics[width=12cm]{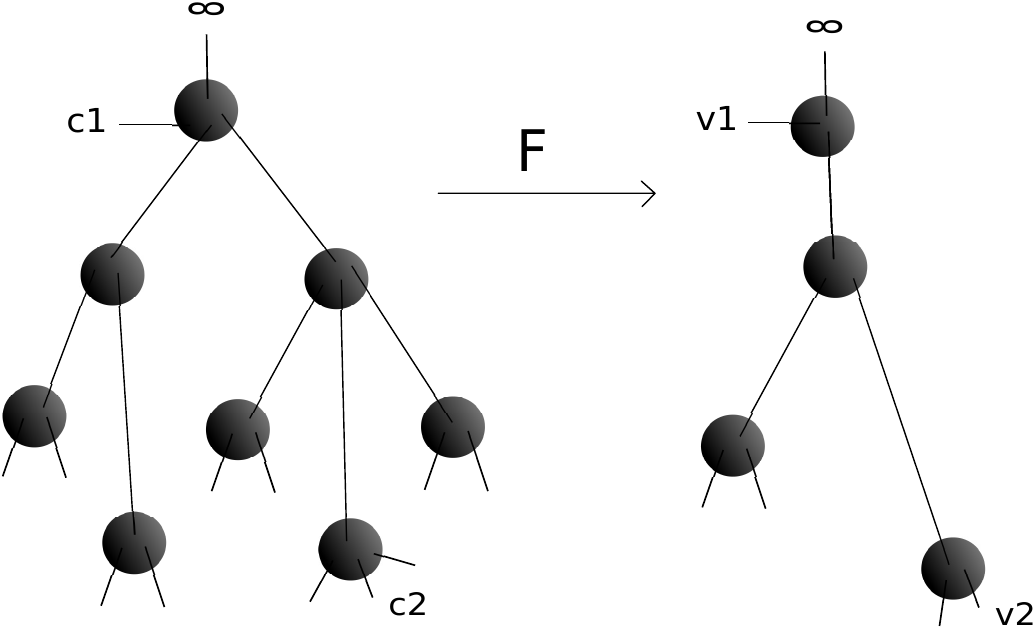}}
   \caption{A cover between trees of spheres of degree 3. The spheres on the left tree map horizontally to the spheres on the right tree. The sphere at the top on the left  maps by a degree three cover with a branching point of degree three at the attaching point of the branch containing $\infty$. The two spheres connecting $c2$ to the top sphere are mapped by covers of degree two to their images. The others are mapped by a degree one cover.}\label{excovertreepo}
 \end{figure}

\begin{corollary}
The map $F:T^Y\to T^Z$ is surjective.  
\end{corollary}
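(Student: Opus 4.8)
The plan is to leverage the fact, just established, that the quantity $D$ is well defined and \emph{independent} of the chosen vertex or edge of $T^Z$. Once we know that $D_v = D_e = D$ for every vertex $v$ and every edge $e$ of $T^Z$, surjectivity of $F$ follows almost immediately from the positivity of local degrees, so there is no genuine obstacle remaining; all the real work was done in Lemma \ref{defdeg} and in the observation that $D$ is constant along the connected tree $T^Z$.

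First I would check that $D \geq 1$. Since $\T^Y$ is a tree of spheres marked by a set $Y$ with at least three elements, $T^Y$ has at least three leaves and hence at least one internal vertex $v$; its image $w := F(v)$ is then an internal vertex of $T^Z$ carrying the ramified cover $f_v : \St_v \to \St_w$ of degree $\deg(v) \geq 1$. Therefore
\[
D = D_w = \sum_{v'\in F^{-1}(w)} \deg(v') \geq \deg(v) \geq 1,
\]
so $D$ is a positive integer. Next I would argue vertex by vertex and edge by edge. Fixing any vertex $u$ of $T^Z$, the constancy of $D$ together with Lemma \ref{defdeg} gives $\sum_{v'\in F^{-1}(u)} \deg(v') = D_u = D \geq 1$; since each local degree $\deg(v')$ is a positive integer, this sum can be positive only if the index set is nonempty, whence $F^{-1}(u) \neq \emptyset$. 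The identical argument applied to an arbitrary edge $e$ of $T^Z$, using $D_e = D \geq 1$ and $\deg(e') \geq 1$, yields $F^{-1}(e) \neq \emptyset$. As $F$ sends vertices to vertices and edges to edges, every vertex and every edge of $T^Z$ lies in the image, i.e. $F$ is surjective.

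The only points requiring any care are confirming that $D$ is strictly positive (so that an empty preimage is genuinely impossible) and treating edges alongside vertices; both are routine given Lemma \ref{defdeg}. Alternatively, one could package the conclusion through Proposition \ref{imageconnexe}: the image $F(T^Y)$ is a sub-tree of $T^Z$, and the positivity of $D_u$ for every $u$ forbids it from omitting any vertex, forcing $F(T^Y) = T^Z$. I expect the inequality $D \geq 1$ to be the only step worth spelling out, and even that reduces to the existence of an internal vertex in $T^Y$.
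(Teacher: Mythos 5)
Your proposal is correct and follows essentially the same route as the paper: the paper's proof is the one-line observation that $D_v\neq 0$ for every vertex $v$ of $T^Z$, which is exactly your argument that $D_u = D \geq 1$ forces $F^{-1}(u)\neq\emptyset$. You merely spell out the details the paper leaves implicit (why $D\geq 1$, and the parallel treatment of edges), so there is no substantive difference in approach.
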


\begin{proof}
For every vertex $v$ of $T^Z$, we have $D_v\neq0$.
 \end{proof}

The following lemma and its corollary help to visualize the set of critical vertices distribution on a tree.

\begin{lemma}\label{ptscrit}    
Let ${\cal F}:{\cal T}^Y\to {\cal T}^Z$ be a cover between trees of spheres. Then every critical vertex lies in a path connecting two critical leaves, and each vertex on this path is critical.
\end{lemma}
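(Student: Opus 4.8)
The plan is to follow ``critical directions'' out of a critical vertex until reaching leaves, using the Riemann--Hurwitz formula on each sphere to guarantee that criticality propagates.

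First I would record the local Riemann--Hurwitz identity at each internal vertex. Since $f_v:\St_v\to\St_w$ is a topological ramified cover of degree $\deg(v)$ between spheres and its ramification locus is contained in $Y_v=\{e_v : e\in E_v\}$ (because $f_v:\St_v-Y_v\to\St_w-Z_w$ is a genuine cover), every point outside $Y_v$ is unramified, so
$$\sum_{e\in E_v}\bigl(\deg(e)-1\bigr)=\sum_{x\in\St_v}\bigl(\deg_x f_v-1\bigr)=2\deg(v)-2.$$
Call an edge $e$ \emph{critical} if $\deg(e)\geq 2$. The key elementary inequality I would use repeatedly is $\deg(e)\leq\deg(v)$ for every $e\in E_v$, since a local degree never exceeds the global degree.

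Next, two observations drive the whole argument. First, if $v$ is a critical vertex, then $2\deg(v)-2=2(\deg(v)-1)$ while each summand above is at most $\deg(v)-1$; hence at least two distinct edges of $E_v$ are critical. Second, if we enter an internal vertex $v_1$ along a critical edge $e_1$, then $f_{v_1}$ has local degree $\deg(e_1)\geq 2$ at $i_{v_1}(e_1)$, so $v_1$ is itself critical, and moreover, subtracting the contribution of $e_1$,
$$\sum_{e\in E_{v_1},\,e\neq e_1}\bigl(\deg(e)-1\bigr)=2\deg(v_1)-2-\bigl(\deg(e_1)-1\bigr)\geq\deg(v_1)-1\geq 1,$$
so there is a critical edge $e_2\neq e_1$ at $v_1$ along which to continue. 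Starting from a critical vertex $v$, the first observation provides two distinct critical edges $e,e'$; following each one and repeatedly applying the second, I build a non-backtracking walk of critical vertices joined by critical edges. A non-backtracking walk in a tree is a simple path, and since the tree is finite it must terminate --- necessarily at a leaf $y$, which is then a critical leaf because the incoming edge forces $\deg(y)\geq 2$.

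Finally I would assemble the two resulting paths $[v,y_1]$ and $[v,y_2]$, emanating from the two distinct edges $e,e'$ of $v$, into the single path $[y_1,y_2]$; since $y_1$ and $y_2$ lie in different branches of $v$, this path passes through $v$, and by construction all of its vertices are critical. The main obstacle is the second observation: one must be sure that criticality does not ``dead-end'' at an internal vertex but always has a fresh outgoing critical edge, and this is exactly where the bound $\deg(e_1)\leq\deg(v_1)$ together with Riemann--Hurwitz is essential.
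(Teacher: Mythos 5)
Your proof is correct and follows essentially the same route as the paper's: both rest on the Riemann--Hurwitz count on each sphere, which shows that a critical internal vertex has at least two critical edges (and, when entered along a critical edge, always has a fresh critical edge to exit by), together with the fact that criticality propagates across an edge via the equality of local degrees at its two ends. The only difference is presentational --- you construct the path by an explicit non-backtracking walk, whereas the paper takes a maximal path of critical vertices and uses the same two-critical-points fact to force its endpoints to be leaves.
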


\begin{proof} Let $v$ be a critical vertex of ${\cal F}$. Then $f_v$ has at least two distinct critical points. There are at least two distinct edges attached to $v$. So $v$ is on a path of critical vertices.

Let $[v_1,v_2]$ be such a path with a maximal number of vertices. From this maximality property, we see that there is only one critical edge (edge with degree strictly greater than one) attached to $v_1$. If $v_1$ is not a leaf then $f_{v_1}$ has just one critical point and that is not possible. So $v_1$ is a leaf. As well, $v_2$ is a leaf. 
 \end{proof}

Recall that the characteristic of a vertex $v$ of $T^X$ is $\chi_{T^X}(v):=2-\val(v)$, and thus it is equal to the Euler characteristic of ${\cal S}_v-X_v$.

We have a natural  Riemann-Hurwitz formula for covers between trees of spheres where $\chi_\T$ plays the same role as the Euler characteristic. 

\begin{proposition}[Riemann Hurwitz formula]\label{RH}
Let ${\cal F}:{\cal T}^Y\to {\cal T}^Z$ be a cover between trees of spheres of degree $D$, and $T''$ be a sub-graph of $T^Z$ and $T':= F^{-1}(T'')$. Then
\[\chi_{T^Y}(T') = D\cdot \chi_{T^Z}(T'') - \sum_{y\in \Crit{\cal F}\cap T'} \mult(y).\]
\end{proposition}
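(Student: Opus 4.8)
The plan is to treat internal vertices and leaves separately, since the local ramified covers $f_v$ are defined only at internal vertices. Recall from the remark preceding the statement that for an internal vertex $v$ one has $\chi_{T^Y}(v)=\chi(\St_v-Y_v)$, while every leaf has characteristic $1$. So I would write both $\chi_{T^Y}(T')$ and $D\cdot\chi_{T^Z}(T'')$ as the sum of an ``internal part'' and a ``leaf part'', show the internal parts agree, and read off the correction term from the discrepancy of the leaf parts.

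First I would handle the internal vertices. For $v\in IV^Y\cap T'$ with $w:=F(v)$, condition $(1)$ in the definition of a cover says that $f_v:\St_v-Y_v\to\St_w-Z_w$ is an honest (unramified) covering map; its degree is $\deg(v)$ because, the restriction being a cover, all ramification of $f_v$ sits in $Y_v$, so over the connected base $\St_w-Z_w$ the fibre cardinality is the full degree $\deg(f_v)$. Multiplicativity of the Euler characteristic under finite coverings then gives
\[
\chi_{T^Y}(v)=\chi(\St_v-Y_v)=\deg(v)\cdot\chi(\St_w-Z_w)=\deg(v)\cdot\chi_{T^Z}(w).
\]
Summing over $v\in IV^Y\cap T'$ and regrouping according to $w=F(v)$, I would use that $F$ sends internal vertices to internal vertices and leaves to leaves (so $F^{-1}(w)\subseteq IV^Y$), that $F^{-1}(w)\subseteq F^{-1}(T'')=T'$ for $w\in T''$, and surjectivity of $F$. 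Then $\sum_{v\in F^{-1}(w)}\deg(v)=D_w=D$ by Lemma~\ref{defdeg} and the definition of $D$, which yields
\[
\sum_{v\in IV^Y\cap T'}\chi_{T^Y}(v)=D\sum_{w\in IV^Z\cap T''}\chi_{T^Z}(w).
\]

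Next I would compare the leaf contributions. Every leaf of $T'$ contributes $1$ to $\chi_{T^Y}(T')$, so the leaf part on the left is $\card(Y\cap T')$. On the right, since $T'=F^{-1}(T'')$ and $F$ maps leaves to leaves, $Y\cap T'$ is the disjoint union of the fibres $F^{-1}(z)$ over $z\in Z\cap T''$, each consisting of leaves; using $D_z=D$ I would rewrite
\[
D\cdot\card(Z\cap T'')=\sum_{z\in Z\cap T''}D_z=\sum_{z\in Z\cap T''}\sum_{y\in F^{-1}(z)}\deg(y)=\sum_{y\in Y\cap T'}\deg(y).
\]
Subtracting the two expressions for the full characteristics, the internal parts cancel and I am left with
\[
D\cdot\chi_{T^Z}(T'')-\chi_{T^Y}(T')=\sum_{y\in Y\cap T'}\deg(y)-\card(Y\cap T')=\sum_{y\in Y\cap T'}\bigl(\deg(y)-1\bigr),
\]
which is exactly $\sum_{y\in\Crit\F\cap T'}\mult(y)$ since non-critical leaves have multiplicity $0$. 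Rearranging gives the claimed formula.

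The hard part will be the internal-vertex step: one must justify that the restricted map $f_v$ is a covering of degree exactly $\deg(v)$ and invoke multiplicativity of the Euler characteristic for the punctured spheres, while carefully tracking which vertices are internal and which are leaves through the regrouping. The leaf bookkeeping, though elementary, is where the correction term is produced; it is precisely the failure of the relation $\chi_{T^Y}(v)=\deg(v)\cdot\chi_{T^Z}(F(v))$ at leaves (where it would force $1=\deg(y)$) that accounts for $\sum\mult(y)$.
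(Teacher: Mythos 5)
Your proof is correct and takes essentially the same approach as the paper: the paper likewise splits the characteristic into internal-vertex and leaf contributions, uses $\chi_{T^Y}(v)=\deg(v)\cdot\chi_{T^Z}(F(v))$ (classical Riemann--Hurwitz, i.e.\ multiplicativity of the Euler characteristic for the punctured-sphere covers) for the internal part, and produces the correction term from $\chi_{T^Y}(y)=1=\deg(y)-\mult(y)$ at leaves. The only difference is organizational: the paper sums fiber-by-fiber over the vertices and leaves of $T''$, while you sum over $T'$ and regroup by fibers, which amounts to the same computation.
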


\begin{proof} If $v'' \in IV^Z$, then from the Riemann-Hurwitz formula we have
\begin{equation}\label{eqRH1}
\sum_{v'\in F^{-1}(v'')} \chi_{T^Y}(v') =  \sum_{v'\in F^{-1}(v'')}  \deg(v')\cdot \chi_{T^Z}(v) = D\cdot \chi_{T^Z}(v'').
\end{equation}
Otherwise, a leaf has characteristic $1$, so for every leaf $y$ of $T^Y$, we have 
\[\chi_{T^Y}(y) = \deg(y)-\mult(y).\]
Then, for every leaf $z\in Z$, we deduce that
\begin{equation}\label{eqRH2}
\sum_{y\in F^{-1}(z)} \chi_{T^Y}(z) = 
\sum_{y\in F^{-1}(z)}  \deg(y)-\mult(y) = D \cdot \chi_{T^Z}(z) - \sum_{y\in F^{-1}(z)}  \mult(y).
\end{equation}
By adding (\ref{eqRH1}) and (\ref{eqRH2}) for all vertices $v''\in IV^Z\cap T''$ and leaves $z\in T''\cap Z$, we get the formula. 
 \end{proof}

\paragraph{Consequences of the Riemann-Hurwitz formula}

\begin{corollary}\label{ptcrit}
 If ${\cal F} : {\cal T}^Y\to {\cal T}^Z$ is a cover between trees of spheres of degree D, then the tree $T^Y$ has $2D-2$ critical leaves counted with multiplicities.
\end{corollary}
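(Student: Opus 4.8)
The plan is to read this off directly from the Riemann--Hurwitz formula of Proposition \ref{RH} by feeding it the whole target tree. First I would set $T'' := T^Z$ and correspondingly $T' := F^{-1}(T^Z)$. Since the corollary following Lemma \ref{defdeg} tells us that $F$ is surjective, the preimage of the entire tree is the entire tree, so $T' = T^Y$. This is the one place where I need to be slightly careful: I must justify that taking $T''$ to be all of $T^Z$ is legitimate (it is a sub-graph, trivially) and that its full preimage recovers $T^Y$ rather than a proper sub-graph.

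With these choices Proposition \ref{RH} reads
\[\chi_{T^Y}(T^Y) = D\cdot \chi_{T^Z}(T^Z) - \sum_{y\in \Crit{\cal F}\cap T^Y} \mult(y).\]
Next I would invoke Lemma \ref{val}, which asserts $\chi_T(T)=2$ for any tree. Applying it to both $T^Y$ and $T^Z$ gives $\chi_{T^Y}(T^Y)=\chi_{T^Z}(T^Z)=2$, so the identity collapses to
\[2 = 2D - \sum_{y\in \Crit{\cal F}} \mult(y),\]
where I have also used that every element of $\Crit{\cal F}$ is by definition a (critical) leaf of $T^Y$, so $\Crit{\cal F}\cap T^Y = \Crit{\cal F}$.

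Rearranging yields $\sum_{y\in \Crit{\cal F}} \mult(y) = 2D-2$. Since $\mult(y)=\deg(y)-1$ is precisely the weight with which each critical leaf is counted, this sum is by definition the number of critical leaves of $T^Y$ counted with multiplicity, which gives the claimed value $2D-2$ and completes the argument. I do not anticipate any genuine obstacle here: the entire content is the clever substitution $T''=T^Z$ together with the two evaluations $\chi=2$, so the proof is essentially a one-line consequence of the machinery already assembled, and the only thing worth spelling out is the bookkeeping identifying $\sum_y \mult(y)$ with the multiplicity count of critical leaves.
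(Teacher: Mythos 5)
Your proof is correct and is essentially the paper's own argument: apply the Riemann--Hurwitz formula of Proposition \ref{RH} with $T'=T^Y$, $T''=T^Z$ and use Lemma \ref{val} to evaluate both characteristics as $2$. One tiny remark: the appeal to surjectivity is superfluous, since $F^{-1}(T^Z)=T^Y$ holds automatically because $F$ is a map defined on all of $T^Y$.
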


\begin{proof}We use the Riemann-Hurwitz Formula for $T'=T^Y$ and $T''=T^Z$, and the fact that $\chi_{T^Y}(T^Y) = \chi_{T^Z}(T^Z)=2$ (cf Lemma  \ref{val}.)
 \end{proof}


We have proven that the pair $(F|_Y,\deg_{F}|_Y)$ defines a portrait (Definition \ref{Defportrait}).

\begin{corollary}\label{bornport}
If ${\cal F}:{\cal T}^Y\to {\cal T}^Z$ is a cover between trees of spheres of degree $D$, then
\[2-\card(Y) = D \cdot (2-\card(Z)).\]
\end{corollary}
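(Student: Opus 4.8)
The plan is to recast both sides of the identity in terms of the characteristic $\chi$ and then to localize the Riemann--Hurwitz formula of Proposition \ref{RH} over each internal vertex of $T^Z$.

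First I would record the elementary bookkeeping identity relating $\card(Y)$ to a sum of characteristics. By Lemma \ref{val} we have $\chi_{T^Y}(T^Y)=2$. Splitting the vertices of $T^Y$ into its $\card(Y)$ leaves, each of characteristic $1$, and its internal vertices, this reads
\[2=\card(Y)+\sum_{v\in IV^Y}\chi_{T^Y}(v),\qquad\text{so}\qquad \sum_{v\in IV^Y}\chi_{T^Y}(v)=2-\card(Y).\]
The same computation on $T^Z$ gives $\sum_{w\in IV^Z}\chi_{T^Z}(w)=2-\card(Z)$. Thus it suffices to prove the equality $\sum_{v\in IV^Y}\chi_{T^Y}(v)=D\sum_{w\in IV^Z}\chi_{T^Z}(w)$.

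Next I would apply Proposition \ref{RH} to the one-vertex sub-graph $T''=\{w\}$ for each $w\in IV^Z$, with $T'=F^{-1}(w)$. Because $F$ sends leaves to leaves ($F(Y)\subseteq Z$), the preimage of an internal vertex contains no leaf, so $\Crit\F\cap T'=\emptyset$ and the multiplicity correction disappears; the formula becomes $\sum_{v\in F^{-1}(w)}\chi_{T^Y}(v)=D\cdot\chi_{T^Z}(w)$. Summing this over all $w\in IV^Z$ and using that $F$ maps internal vertices to internal vertices and leaves to leaves — so that the family $\{F^{-1}(w)\}_{w\in IV^Z}$ is a partition of $IV^Y$ — the left-hand side collapses to $\sum_{v\in IV^Y}\chi_{T^Y}(v)$ and we obtain exactly the required equality.

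The only genuinely delicate point, and the step I would check most carefully, is this partition claim: that the $F$-fibers of the internal vertices of $T^Z$ cover $IV^Y$ exactly once each and never meet a leaf of $T^Y$. Disjointness is automatic since $F$ is a map, so everything rests on the two inclusions $F(IV^Y)\subseteq IV^Z$ and $F(Y)\subseteq Z$ built into the definition of a cover. As a consistency check one can instead argue by counting leaves directly: for a leaf $z\in Z$ one has $F^{-1}(z)\subseteq Y$ and $\sum_{y\in F^{-1}(z)}\deg(y)=D$ by Lemma \ref{defdeg}, whence $\card F^{-1}(z)=D-\sum_{y\in F^{-1}(z)}\mult(y)$; summing over $z\in Z$ and invoking Corollary \ref{ptcrit} (which gives $\sum_{y\in\Crit\F}\mult(y)=2D-2$) yields $\card(Y)=D\cdot\card(Z)-(2D-2)$, which is the asserted identity after rearrangement.
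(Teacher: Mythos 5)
Your main argument is correct, and it is essentially the paper's own proof carried out at a finer granularity: the paper applies Proposition \ref{RH} once, globally, to $T''=T^Z-Z$ and $T'=T^Y-Y=F^{-1}(T'')$, kills the critical term because $T'$ contains no leaf of $T^Y$, and reads off $\chi_{T^Y}(T')=2-\card(Y)$ and $\chi_{T^Z}(T'')=2-\card(Z)$ from Lemma \ref{carautre}. Your per-vertex identity $\sum_{v\in F^{-1}(w)}\chi_{T^Y}(v)=D\cdot\chi_{T^Z}(w)$ is precisely equation (\ref{eqRH1}) in the paper's proof of Proposition \ref{RH}, and summing it over $w\in IV^Z$ reproduces the same computation, while your bookkeeping identity $\sum_{v\in IV^Y}\chi_{T^Y}(v)=2-\card(Y)$ is Lemma \ref{carautre} for the open subgraph $T^Y-Y$, re-derived from Lemma \ref{val}; the partition point you single out as delicate is exactly the paper's remark that $T'$ contains no leaves, and it holds for the reasons you give. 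The one genuinely different ingredient is your closing ``consistency check'', which is in fact a complete second proof of a different flavour: it works at the leaves, where all the ramification concentrates, rather than at the internal vertices, where none of it does, combining the fibre count $\sum_{y\in F^{-1}(z)}\deg(y)=D$ from Lemma \ref{defdeg} with the total $\sum_{y\in\Crit\F}\mult(y)=2D-2$ from Corollary \ref{ptcrit}. That route is more elementary in that it needs no open subgraphs or boundary counts, at the cost of invoking Corollary \ref{ptcrit}, which is itself the global Riemann--Hurwitz formula in disguise.
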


\begin{proof} 
We apply the Riemann-Hurwitz formula for $T' = T^Y-Y$ and $T'' = T^Z-Z$, using the fact that $\chi_{T^Y}(T') = 2-\card Y$ and $\chi_{T^Z}(T'') = 2-\card(Z)$ (cf Lemma \ref{carautre}). Given that $T'$ doesn't have any leaf of $T^Y$, this proves the result.
 \end{proof}

We proved that the degree of ${\cal F}$ is bounded relatively to $\card(Y)$ and $\card(Z)$.

\begin{lemma} \label{restric}
Let ${\cal F}:{\cal T}^Y\to {\cal T}^Z$ be a cover between trees of spheres, $T''$ be an open, non empty and connected subset of $T^Z$ and $T'$ be a connected component of $F^{-1}(T'')$. Then the map $\overline {\cal F}:\overline {\cal T}'\to \overline {\cal T}''$ defined by 
\begin{enumerate}
\item $\overline F:= F: \overline T'\to \overline T''$ and 
\item $\overline f_v:= f_v$ if $v\in V'-Y'$
\end{enumerate}
is a cover between trees of spheres. 
\end{lemma}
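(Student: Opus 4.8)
The plan is to reduce everything to the structural description of adherences of open connected subsets provided by Lemma~\ref{sarbre}, and then to observe that all the data defining $\overline{\cal F}$ are simply inherited from ${\cal F}$.

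First I would check that $T'$ is open and connected, so that Lemma~\ref{sarbre} applies. Connectedness is part of the hypothesis. For openness, note that $F$ is continuous (Proposition~\ref{imageconnexe}), so $F^{-1}(T'')$ is open; and a connected component of an open subset of a tree is open, since for any vertex $v$ of $T'$ every edge $e\in E_v$ is connected to $v$ and hence lies in the same component $T'$, giving $E_v\subseteq T'$. Applying Lemma~\ref{sarbre} to $T'\subseteq T^Y$ and to $T''\subseteq T^Z$ then yields that $\overline T'$ and $\overline T''$ are sub-trees whose internal vertices are $IV^Y\cap T'$ and $IV^Z\cap T''$ respectively, that their leaves are the original leaves lying in $T'$ (resp. $T''$) together with the frontier vertices $\partial_{T^Y}T'$ (resp. $\partial_{T^Z}T''$), and --- this is the point I will use repeatedly --- that the valence of each internal vertex is unchanged. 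In particular, for an internal vertex $v$ of $\overline T'$ one has $E_v^{\overline T'}=E_v^{T^Y}$, so the attaching-point set $Y'_v:=i_v(E_v^{\overline T'})$ equals $Y_v$, and similarly $Z''_w:=i_w(E_w^{\overline T''})$ equals $Z_w$ for internal vertices $w$ of $\overline T''$.

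Next I would verify that $\overline F=F|_{\overline T'}$ is a trees map into $\overline T''$ sending leaves to leaves and internal vertices to internal vertices. An internal vertex $v$ of $\overline T'$ lies in $T'\subseteq F^{-1}(T'')$, so $F(v)\in IV^Z\cap T''$, an internal vertex of $\overline T''$. An original leaf $y\in Y\cap T'$ maps to $F(y)\in Z\cap T''$, a leaf of $\overline T''$. The delicate case is a frontier vertex $v\in\partial_{T^Y}T'$: it lies on an edge $e$ of $T'$, so $F(e)$ is an edge of $T''$ having $F(v)$ as an endpoint; and $F(v)\notin T''$, for otherwise $v\in F^{-1}(T'')$ would be connected to $T'$ through $e\in T'$ and would thus lie in the connected component $T'$, contradicting $v\notin T'$. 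Hence $F(v)\in\partial_{T^Z}T''$, a leaf of $\overline T''$. Since $\overline T'$ contains no edges beyond those of $T'$, the image of every edge lies in $T''\subseteq\overline T''$, so $\overline F$ is well defined with values in $\overline T''$.

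Finally, for each internal vertex $v$ of $\overline T'$ (that is, $v\in V'-Y'$), with $w:=\overline F(v)$, I set $\overline f_v:=f_v$ and check the three defining conditions of a cover. Because $Y'_v=Y_v$ and $Z''_w=Z_w$, the restriction $\overline f_v:\St_v-Y'_v\to\St_w-Z''_w$ coincides with $f_v:\St_v-Y_v\to\St_w-Z_w$, which is a cover by the definition of ${\cal F}$; likewise the identity $f_v\circ i_v=i_w\circ F$ and the local-degree compatibility $\deg_{e_{v_1}}f_{v_1}=\deg_{e_{v_2}}f_{v_2}$ on interior edges hold verbatim, since they involve exactly the same edges, attaching points, and maps as in ${\cal F}$. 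I expect the only genuine obstacle to be the two topological points isolated above --- that $T'$ is open and that frontier vertices map to frontier vertices --- both of which rest essentially on $T'$ being a \emph{connected component} of the preimage; once the preservation of valences is established, every sphere-level condition transfers for free.
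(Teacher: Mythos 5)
Your proof is correct and follows essentially the same route as the paper's: inherit all data from ${\cal F}$, use Lemma~\ref{sarbre} to see that internal vertices of $\overline{\cal T}'$ keep exactly their edges from $T^Y$ (so every sphere-level condition transfers verbatim), and check that leaves of $\overline{\cal T}'$ --- both original leaves and frontier vertices --- map to leaves of $\overline{\cal T}''$. You merely spell out two details the paper leaves implicit (that $T'$ is open, and that a frontier vertex cannot map into $T''$ by maximality of the connected component), so this is the same argument, carried out more carefully.
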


\begin{proof}
Indeed, for every vertex $v\in V'-Y'$, edges on $v$ in $\overline T'$ are the same as the one on $T$ so $\overline f_v$ satisfies the required conditions. Moreover, leaves of $\overline T'$ are either leaves of $T$ and map to leaves of $T^Z$, so leaves of $\overline T''$, or are elements of $\overline T'-T'$. In this case they map to elements of $\overline T''-T''$ which are leaves of $\overline T''$ because adjacent vertices are mapped to adjacent vertices.
 \end{proof}

We define $\deg(\F|_{T'_Y}):=\deg\overline \F$ and $\mult T'_Y:=\deg(\F|_{T'_Y})-1$.

Then we have the restriction of the Riemann-Hurwitz formula to a connected component of the preimage.

\begin{proposition}\label{RH} Let ${\cal F}:{\cal T}^Y\to {\cal T}^Z$ be a cover between trees of spheres, $T''$ be a sub-tree of $T^Z$ and $T'$ be a connected component of $F^{-1}(T'')$. Then we have \[\chi_{T^Y}(T')=\deg(\F|_{\overline{\cal T'}})\cdot \chi_{T^Z}(T'')-\sum_{y\in \Crit\F\cap T'}\mult(y).\]
\end{proposition}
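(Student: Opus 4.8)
The plan is to re-run the computation from the proof of the global Riemann--Hurwitz formula, but to sum only over the fibers that meet the single component $T'$ and to replace the total degree $D$ by the localized degree $D':=\deg(\F|_{\overline{\T}'})$. The whole point is that, after restricting, the three ingredients of the global proof survive unchanged: a cover structure on the component, the local Riemann--Hurwitz identity at each internal vertex, and the constancy of the fiber degree along the base.

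First I would regard the restriction $\overline{\F}\colon\overline{\T}'\to\overline{\T}''$ as a cover between trees of spheres, as furnished by Lemma \ref{restric} (its statement is phrased for an open $T''$, but either its proof applies verbatim to a sub-tree, or one applies it to a small open neighbourhood and passes to closures). By definition its degree is $D'=\deg(\F|_{\overline{\T}'})$. Since $T''$ is a sub-tree it is closed, hence $F^{-1}(T'')$ and its connected component $T'$ are closed, so $\overline{\T}'=\T'$ and the fiber of $\overline{\F}$ over any vertex $w$ of $T''$ is exactly $F^{-1}(w)\cap T'$. Applying Lemma \ref{defdeg} to $\overline{\F}$ then yields the key constancy: for every vertex $w$ of $T''$ one has $\sum_{v\in F^{-1}(w)\cap T'}\deg(v)=D'$. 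This is precisely the property of $D$ used in the global proof, now carried by $D'$ on the component.

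Next I would split $\chi_{T^Y}(T')=\sum_{v\in T'}\chi_{T^Y}(v)$ according to the image $w=F(v)\in T''$, using that a cover sends internal vertices to internal vertices and leaves to leaves. For an internal $v$ over an internal $w$ the local identity $\chi_{T^Y}(v)=\deg(v)\,\chi_{T^Z}(w)$ holds verbatim; crucially $\chi_{T^Y}(v)$ is taken with the valence in the ambient tree $T^Y$, so the localization does not affect it, and summing over the fiber together with the constancy above contributes $D'\,\chi_{T^Z}(w)$. For a leaf $w=z\in Z$, each preimage leaf $y\in T'$ satisfies $\chi_{T^Y}(y)=1=\deg(y)-\mult(y)$, so that fiber contributes $D'\,\chi_{T^Z}(z)-\sum_{y\in F^{-1}(z)\cap T'}\mult(y)$. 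Adding the two kinds of contributions over all $w\in T''$ produces the stated formula, the surviving multiplicities being exactly those of $\Crit\F\cap T'$.

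The delicate point will be the boundary bookkeeping just invoked: I must check that for each $w\in T''$ the fiber seen by the component is precisely $F^{-1}(w)\cap T'$, with no preimage lost to, nor borrowed from, a neighbouring component, and that this common fiber is the one to which Lemma \ref{defdeg} assigns the constant $D'$. It is exactly the closedness of $T'$, giving $\overline{\T}'=\T'$, that guarantees this clean identification of fibers and thereby legitimizes replacing $D$ by $D'$ in both summations; once it is in place, the rest combines as in the global formula.
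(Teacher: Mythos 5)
Your summation is the right engine: splitting $\chi_{T^Y}(T')$ fiber by fiber, using $\chi_{T^Y}(v)=\deg(v)\,\chi_{T^Z}(w)$ at internal vertices, $\chi_{T^Y}(y)=\deg(y)-\mult(y)$ at leaves, and the constancy of the localized fiber degree $D'$, is exactly the computation that proves the paper's global Riemann--Hurwitz formula, rerun on one component. The paper itself argues differently: it does not redo the summation, but applies the already-proven global formula to the cover $\overline{\F}:\overline{\T}'\to\overline{\T}''$ furnished by Lemma \ref{restric}, and then transfers characteristics via the identities $\chi_{T^Y}(T')=\chi_{\overline{T}'}(T')$ and $\chi_{T^Z}(T'')=\chi_{\overline{T}''}(T'')$, which hold in the open setting of Lemma \ref{restric} because every vertex of an open $T'$ keeps its ambient valence inside $\overline{T}'$ (Lemma \ref{sarbre}).

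The genuine gap in your proposal is the step you flag and then dismiss: producing the restricted cover when $T''$ is a closed sub-tree. Lemma \ref{restric} does \emph{not} apply verbatim: its proof rests on openness, namely that every vertex of $T'$ retains all of its $T^Y$-edges, so that the local maps $f_v$ and their puncture sets are unchanged. For a closed component this fails: a vertex $v\in T'$ loses exactly those edges whose images leave $T''$, so the restricted local map is a cover of $\St_{F(v)}$ punctured at fewer points, and one must prove that every $f_v$-preimage of an attaching point of a $T''$-edge is an attaching point of a $T'$-edge. That statement follows not from openness but from maximality of the connected component (an edge of $T^Y$ at $v$ mapping into $T''$ lies in $F^{-1}(T'')$ and touches $T'$, hence lies in $T'$); the same argument is what makes preimages of $T''$-internal vertices internal in $T'$, i.e.\ what sends leaves to leaves and keeps fibers from being shared with neighbouring components. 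Your fallback also fails: the smallest open set containing a closed sub-tree $T''$ is $T''$ together with all adjacent edges of $T^Z$, and its closure strictly contains $T''$ (unless $T''=T^Z$), so applying Lemma \ref{restric} to a neighbourhood and ``passing to closures'' does not land you back on $T''$ and $T'$. Note finally that your closed reading is in tension with the paper's own proof: if $\overline{\T}'=\T'$ then $\chi_{\overline{T}'}(T')=2$ by Lemma \ref{val}, so the characteristic-transfer identities above would be false; the closure in the notation $\deg(\F|_{\overline{\T}'})$, and the use of the proposition in Theorem \ref{npcf} on branches $B_k^Z$ and the open sets $\widetilde B_k$, indicate that the intended setting is the open one of Lemma \ref{restric} --- in which case your premise $\overline{\T}'=\T'$ is unavailable, and the fiber identification you rely on must instead be deduced from $F(\partial_{T^Y} T')\subseteq \partial_{T^Z} T''$. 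Either way, a genuine argument is needed at the point where you currently cite Lemma \ref{restric}.
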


\begin{proof} 
Given that $\chi_{T^Y}(T') = \chi_{\widehat T'}(T')$ and $\chi_{T^Z}(T'') = \chi_{\widehat T''}(T'')$, the result follows immediately by using the Riemann-Hurwitz formula on the cover $\overline {\cal F} : \overline {\cal T}'\to \overline {\cal T}''$.
 \end{proof}


\section{Dynamics on stable trees}\label{chap3}

 In this section we suppose that $X\subseteq Y\cap Z$.

\subsection{Stable tree and dynamical system}

\begin{definition}[Stable tree]
A tree $T$ is stable if every internal vertex has valence greater than two. 
 \end{definition}
 
From now on, we suppose that all {\bf trees are stable}.

\begin{definition} 
 In a tree $T$, we say that a vertex $v$ separates three vertices $v_1$, $v_2$ and $v_3$ if the $v_i$ are in distinct connected components of $T-\{v\}$. 
  \end{definition}

 Note that three distinct vertices of $T$ lie either on a same path or they are separated by a unique vertex.

\begin{definition}[Compatible tree]\label{compdef}
A tree $T^X$ is compatible with a tree $T^Y$ if 
\begin{enumerate}
\item   $X\subseteq Y$, $IV^X\subseteq IV^Y$ and
\item for all vertices $v$, $v_1$, $v_2$ and $v_3$ of $V^X$, the vertex $v$ separates $v_1$, $v_2$ and $v_3$ in $T^X$ if and only if it does the same in $T^Y$.
\end{enumerate}
\end{definition}

Later in this article, it will be useful to know if a vertex is in $T^X$. The two following lemmas give a way to do this in some particular cases.

\begin{lemma} \label{definiX}
If $T^X$ is compatible with $T^Y$ and if an internal vertex $v\in IV^Y$ separates three vertices of $V^X$, then $v\in T^X$. 
\end{lemma}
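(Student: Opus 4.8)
# Proof Plan for Lemma \ref{definiX}

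The plan is to exploit the defining property of compatibility, which is precisely an equivalence of separation relations between $T^X$ and $T^Y$ on the vertices of $V^X$. The statement to prove is that if $v \in IV^Y$ separates three vertices $v_1, v_2, v_3$ of $V^X$ inside $T^Y$, then $v$ must actually be a vertex of $T^X$. Since by definition of compatibility we already have $IV^X \subseteq IV^Y$, the content of the lemma is that separation data forces the separating vertex to be recognized combinatorially inside the smaller tree.

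First I would observe that in any stable tree, three distinct vertices either lie on a common path or are separated by a unique vertex (this is stated just after the definition of separation). So I would apply this trichotomy to the three vertices $v_1, v_2, v_3$ viewed inside $T^X$. Because $v_1, v_2, v_3 \in V^X$, the paths $[v_i, v_j]$ make sense in $T^X$. The key step is to rule out the possibility that $v_1, v_2, v_3$ lie on a common path in $T^X$: if they did, then since $T^X$ is compatible with $T^Y$ and the leaves/internal structure is preserved, their images would also lie on a common path in $T^Y$, contradicting the hypothesis that some vertex $v \in IV^Y$ separates them in $T^Y$. Hence in $T^X$ there is a unique vertex $w \in V^X$ that separates $v_1, v_2, v_3$.

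Next I would show that this separating vertex $w \in V^X$, when considered inside $T^Y$, still separates $v_1, v_2, v_3$: this is exactly the ``only if'' direction of the compatibility condition applied to $w, v_1, v_2, v_3 \in V^X$. Thus both $w$ and $v$ separate the triple $v_1, v_2, v_3$ in $T^Y$. By the uniqueness part of the trichotomy applied in $T^Y$, we conclude $v = w$. Since $w \in V^X$, this gives $v \in T^X$, which is the desired conclusion.

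The main obstacle I anticipate is not the logical skeleton but making the separation-transfer between the two trees fully rigorous: one must be careful that the ``separates'' relation used in the compatibility definition is quantified only over vertices of $V^X$, so I must ensure the candidate separating vertex $w$ produced in $T^X$ genuinely lies in $V^X$ (which it does, being the unique separator of a triple of $V^X$-vertices in the stable tree $T^X$) before I am entitled to invoke compatibility for the pair $(w; v_1, v_2, v_3)$. Once that bookkeeping is in place, the uniqueness of the separating vertex in a stable tree closes the argument immediately.
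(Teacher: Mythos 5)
Your proof follows the same route as the paper's own argument: produce the vertex $w$ of $T^X$ separating $v_1,v_2,v_3$ in $T^X$, transfer the separation to $T^Y$ by compatibility, and conclude $v=w$ by uniqueness of the separating vertex; the bookkeeping point you worry about at the end is indeed automatic, since $V^X$ is by definition the vertex set of $T^X$. The genuine gap is in your elimination of the common-path case. You claim that if $v_1,v_2,v_3$ lay on a common path in $T^X$ then, ``since $T^X$ is compatible with $T^Y$'', they would lie on a common path in $T^Y$. But compatibility, as defined, only transfers statements of the form ``$u$ separates $u_1,u_2,u_3$'' in which all four vertices belong to $V^X$; the hypothetical separator $v$ is only known to lie in $IV^Y$, so neither direction of the equivalence applies to it, and ``lying on a common path'' is not a relation the definition lets you move from one tree to the other. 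The failure is not merely formal: your justification nowhere uses stability of $T^X$, and without stability the implication (and the lemma itself) is false. Take $T^X$ with edges $a{-}m$, $m{-}u$, $u{-}b$, $u{-}c$ (so $m$ has valence $2$), and $T^Y$ with edges $w{-}a$, $w{-}m$, $w{-}u$, $m{-}d$, $m{-}e$, $u{-}b$, $u{-}c$: one checks that $T^X$ is compatible with $T^Y$, that $a,m,u$ lie on a common path in $T^X$, and yet $w$ separates $a,m,u$ in $T^Y$ while $w\notin T^X$.

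Here is how to close the gap using stability (a standing hypothesis in this section, and exactly what excludes the example above). Suppose $v_1,v_2,v_3$ lie on a common path in $T^X$, and relabel so that $v_2\in[v_1,v_3]$ with $v_2\neq v_1,v_3$. Then $v_2$ has valence at least $2$ in $T^X$, hence is internal, hence by stability carries a third edge $e$ not used by the path $[v_1,v_3]$, and the branch $B_{v_2}(e)$ contains some leaf $x\in X$. Thus $v_2$ separates $v_1,v_3,x$ in $T^X$, and now all four vertices lie in $V^X$, so compatibility yields that $v_2$ separates $v_1,v_3,x$ in $T^Y$; in particular $v_1$ and $v_3$ lie in distinct components of $T^Y-\{v_2\}$, so $[v_1,v_2]\cap[v_2,v_3]=\{v_2\}$ in $T^Y$. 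On the other hand, if $v$ separates $v_1,v_2,v_3$ in $T^Y$, then $v$ lies on both $[v_1,v_2]$ and $[v_2,v_3]$ and $v\neq v_2$ --- a contradiction. So the common-path configuration cannot occur, and your main argument (separator $w$ in $T^X$, compatibility, uniqueness in $T^Y$) finishes the proof. Note that the paper's own proof simply asserts the existence of $w$ without comment; the argument above is precisely the missing justification, and it requires stability, not bare compatibility.
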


\begin{proof}
Let $v_1$, $v_2$ and $v_3$ be these three vertices.
There is an internal vertex $v^X$ of $T^X$ separating $v_1$, $v_2$ and $v_3$ in $T^X$. From the compatibility we conclude that this vertex separates $v_1$, $v_2$ and $v_3$ in $T^Y$. It follows that $v^X=v$. 
 \end{proof}

Now we focus on \tss.

\begin{definition}
A \ts ${\cal T}^X$ is compatible with a \ts ${\cal T}^Y$ if 
\begin{enumerate}
\item $T^X$ is compatible with $T^Y$, 
\item for all internal vertex $v$ of $T^X$, we have 
\begin{enumerate}
\item $\St^X_v=\St^Y_v$ and
\item$a_v^X=a_v^Y|_X $. 
\end{enumerate}
\end{enumerate}
\end{definition}

If it is the case we write $\T^X\lhd \T^Y$. Now we can define a dynamical system of \tss. Note that when the spheres are equipped with a projective structure, then we will require in addition that  $\St^X_v$ and $\St^Y_v$ have the same one.

\begin{definition}[Dynamical systems]
A dynamical system of trees of spheres is a pair $(\F,\T^X)$ such that
\begin{enumerate}
\item ${\cal F}:{\cal T}^Y\to {\cal T}^Z$ is a cover between trees of spheres ;
\item $\T^X\lhd \T^Y$ and $\T^X\lhd\T^Z$. 
\end{enumerate}
\end{definition}

In \cite{A1} we prove that if such a $\T^X$ exists then it is unique.
Figure \ref{demarco} gives such an example of dynamical system.

%
%


\subsection{Dynamics on combinatorial trees}   \label{notations}
   
 \begin{figure}
  \centerline{\includegraphics[width=11.8cm]{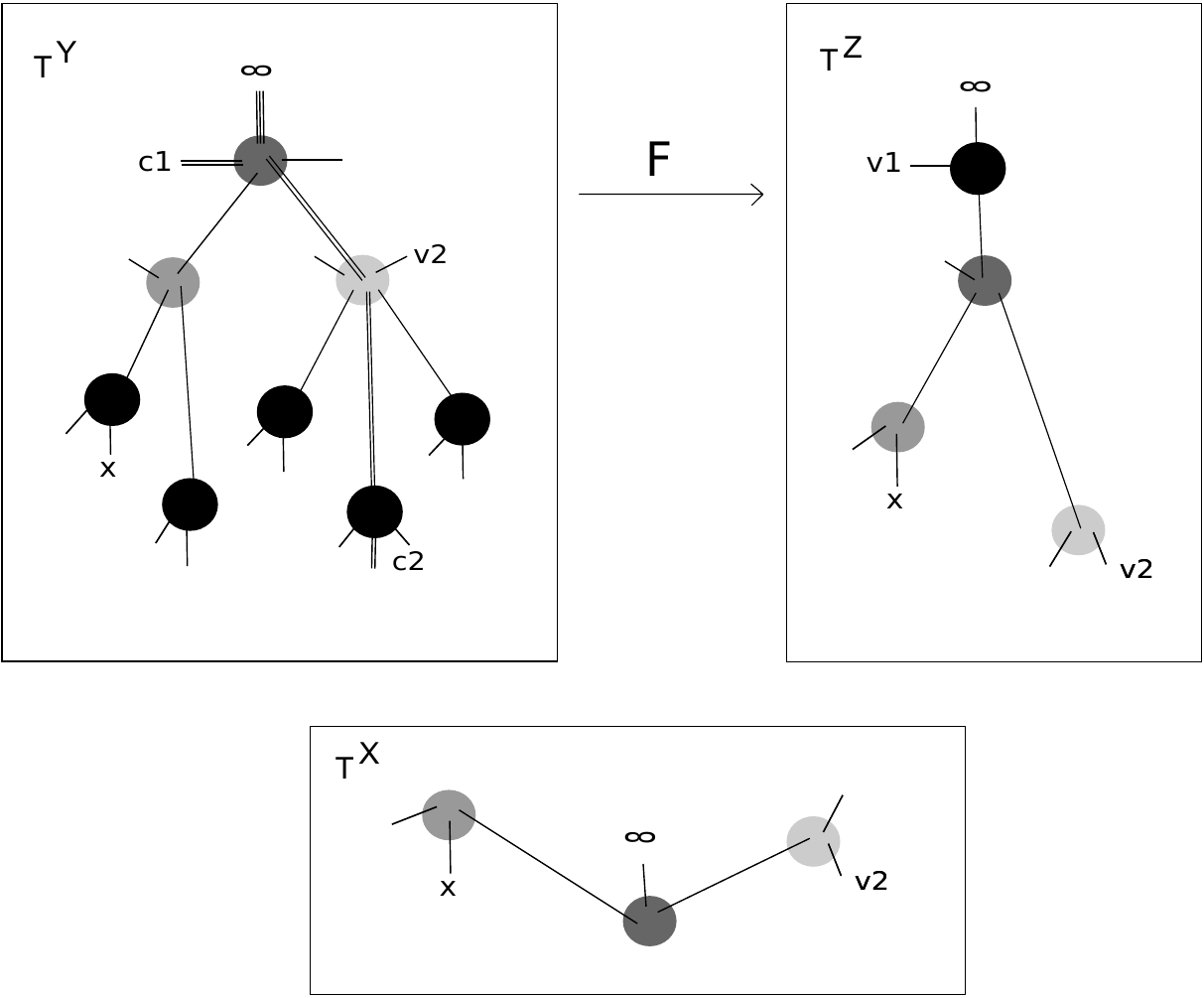}}
   \caption{The map $F$ is the one described on Figure \ref{excovertreepo}. On this example, internal vertices which are not in $\T^X$ are in black whereas internal vertices which are the same in $T^X$, $T^Y$ and $T^Z$ have the same color. The pair $(\F,\T^X)$ is a dynamical system. The internal vertex adjacent to $c_2$ maps to the closest vertex by a degree two cover. Then it maps to the dark grey one by the same kind of cover. Then it maps to the higher black one on $T^Z$ by a degree three cover. All other vertices map with degree one. On this example, each vertex cannot be iterated more than three times. }
\label{demarco} \end{figure}

As we have a common set $V^X$ in the trees $T^Y$ and $T^Z$, 
we can try now to iterate $F$ as soon as images stay in $V^X$. Recursively we define for $k\geq 1$
\[IV(F):=IV^X\quad\text{and}\quad 
IV(F^{k+1}):=\{v\in IV(F^k)~|~F^k(v)\in IV^X\}.\]
Let
\[\Prep(F):=\bigcap_{k\geq 1} IV(F^k).\]
If $v\in \Prep(F)$, then $F^k(v)$ is well defined and lies in $IV^X$ for all $k\geq 0$.

The set $\Prep(F)$ is finite and invariant under the map $F$, each vertex $v$ of $\Prep(F)$ is (pre)periodic under $F$. It may happen that $\Prep(F)$ is empty as we can see on the example on Figure \ref{demarco}.

 If $v\in IV^Y{-} \Prep(F)$, then there exists a smallest integer  $k\in \N$ such that $F^k(v)\notin V^X$. We say that $v$ is forgotten by $F^k$ or simply that $v$ is forgotten if $k=0$.
On Figure \ref{demarco}, internal vertices of $T^Y$ at the bottom on $\T^Y$ are forgotten by $F^3$.

Restricting the dynamic on vertices would be ignoring the tree structure. The following lemma shows a strong restriction coming from the compatibility.

\begin{lemma}\label{identifiable}
Let $B\subset T^Z$ be a branch on $v\in V^X$. If $B$ contains a vertex of $V^X$ then its attaching point $i^Z_v(B)$ lies in $X_v$. 
\end{lemma}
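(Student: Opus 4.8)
The plan is to reduce the assertion to a purely combinatorial claim about the marked trees $T^X$ and $T^Z$. Write $B=B_v^Z(e)$ for the edge $e\in E_v^Z$ attaching $B$, so that $i_v^Z(B)=i_v^Z(e)$. Since $\T^X\lhd\T^Z$ we have $\St_v^X=\St_v^Z$ and $a_v^X=a_v^Z|_X$, hence $X_v=a_v^X(X)=a_v^Z(X)$. Moreover $a_v^Z(z)=i_v^Z(e)$ exactly when $z$ lies in the branch $B$. Therefore it suffices to produce a single leaf $x\in X$ with $x\in B$: for such an $x$ we get $i_v^Z(e)=a_v^Z(x)=a_v^X(x)\in X_v$.

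To produce this leaf, let $w\in V^X$ be a vertex contained in $B$ (so in particular $w\neq v$). If $w$ is a leaf of $T^X$, then $w\in X$ and $x:=w$ works immediately. Otherwise $w\in IV^X$, and by stability $w$ has valence at least $3$ in $T^X$; hence $T^X-\{w\}$ has at least three branches, each of which contains a leaf of $T^X$, i.e.\ an element of $X$. Since $v\neq w$, the vertex $v$ lies in exactly one branch of $w$ in $T^X$, so at least two branches avoid $v$; choose leaves $x,x'\in X$ in two distinct such branches. Then $v$, $x$ and $x'$ lie in three distinct components of $T^X-\{w\}$, that is, $w$ separates $v$, $x$, $x'$ in $T^X$.

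It remains to transfer this to $T^Z$ and conclude. All four vertices $v,w,x,x'$ belong to $V^X$, so by compatibility of $T^X$ with $T^Z$ the vertex $w$ also separates $v$, $x$, $x'$ in $T^Z$; in particular $v$ and $x$ lie in different components of $T^Z-\{w\}$, which means $w$ lies on the path $[v,x]$ in $T^Z$. On the other hand $w\in B=B_v^Z(e)$ says that the path $[v,w]$ in $T^Z$ begins with the edge $e$. Since $[v,x]$ passes through $w$, its initial segment from $v$ to $w$ is exactly $[v,w]$, so $[v,x]$ also begins with $e$; hence $x\in B_v^Z(e)=B$, as required.

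The main obstacle is precisely the mismatch between the combinatorics of $T^X$ and of $T^Z$: a branch of $w$ in the coarser tree $T^X$ need not be a branch of $w$ in the possibly finer tree $T^Z$, so one cannot transport branches directly. The device that bridges the two trees is the separation relation, which is the only structure the compatibility hypothesis preserves; the slightly delicate step is deducing membership in the $v$-branch $B$ (a statement about branches of $v$) from the separation statement about $w$ (a statement about branches of $w$), which is why I first locate $w$ on the path $[v,x]$ and only then read off the initial edge.
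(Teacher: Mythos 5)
Your proof is correct and follows essentially the same route as the paper: both reduce the statement to exhibiting a leaf of $T^X$ (an element of $X$) inside $B$, treating the case where the given vertex of $V^X$ is itself a leaf as immediate, and handling the internal-vertex case by producing such a leaf. The paper's proof merely asserts that an internal vertex of $T^X$ lying in $B$ forces $B$ to contain a leaf of $T^X$; your stability-plus-separation argument transferring from $T^X$ to $T^Z$ via compatibility is precisely the justification the paper leaves implicit.
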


\begin{proof}
Either this vertex is a leaf and the result is trivial, or it is not a leaf and $B$ contains a leaf of $T^X$, then we are in the previous case.
 \end{proof}
 

\subsection{Dynamics on \tss}

If ${\cal F}:{\cal T}^Y\to {\cal T}^Z$ is a cover between trees of spheres and if in addition $v\in T^Y$ then we will denote by $f_v^k$ the composition $f_{F^{k-1}(v)}\circ\ldots\circ f_{F(v)}\circ f_v$ as soon as $v\in IV(F^{k'})$ for some $k'>k$.
We define $\Sigma$ to be the disjoint union of the $\St_v$ for $v\in T^Y$.
The orbit of any point $z$ in $\Sigma$ is the set 
\[\cal{O}(z) :=\{f_v^k(z)\;|\;k\geq0,z\in\St_v,v\in IV(F^{k})\}.\] 
Two points of $\Sigma$ are in the same grand orbit if their orbits intersect. 
Let $\mathcal {GOC}^{\infty}$ denote the set of grand orbits containing a critical point with infinite forward orbit.

 \begin{theorem}[Spheres periodic cycles]\label{npcf}
Let $(\F,\T^X)$ be a dynamical system of \tss. Then, 
$\card\mathcal {GOC}^{\infty} \leq 2 \deg(\F)-2$. 
 \end{theorem}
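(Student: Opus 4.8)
The plan is to obtain the bound $2\deg(\F)-2$ directly from the count of critical leaves supplied by Corollary \ref{ptcrit}, by matching each infinite grand orbit (with multiplicity) to critical leaves sitting in a well-chosen branch. First I would reduce the dynamics to cycles of spheres. Since $\Prep(F)$ is finite and $F$-invariant with every vertex (pre)periodic, any point of $\Sigma$ with infinite orbit must eventually land on a periodic sphere: its forward orbit enters a unique cycle of spheres $C=(v_0,\dots,v_{p-1})$ with $F(v_i)=v_{i+1}$, and the first return map $g:=f_{v_0}^p:\St_{v_0}\to\St_{v_0}$ is a branched self-cover. Thus each grand orbit in $\mathcal{GOC}^\infty$ is attached to exactly one such cycle. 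Note, however, that the critical point it is required to contain may well sit on a transient (strictly pre-periodic) sphere rather than on $C$ itself; this is the source of the difficulty, and it forces me to work with the tree rather than only with the return maps.

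Second, I would prove a local-to-global lemma converting critical points into critical leaves. A critical point of the system is an attaching point $i_v(e)$ with $\deg(e)\geq 2$. One checks that the branch $B_v(e)$ is a connected component of $F^{-1}\bigl(B_{F(v)}(F(e))\bigr)$ (the neighbour $v$ maps out of the target branch, so it is a boundary leaf, and any sibling branch $B_v(e')$ with $F(e')=F(e)$ is separated from $B_v(e)$ by $v$). Both branches have characteristic $1$ by Lemma \ref{branch}, so applying the restricted Riemann--Hurwitz formula (Proposition \ref{RH}) to $\overline{\cal F}:\overline{B_v(e)}\to\overline{B_{F(v)}(F(e))}$ gives $\sum_{y\in\Crit\F\cap B_v(e)}\mult(y)=\deg\bigl(\F|_{\overline{B_v(e)}}\bigr)-1\geq\deg(e)-1\geq 1$, using that the degree of the restricted cover is at least the local degree $\deg(e)$ at the boundary leaf. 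Hence the branch cut out by any critical point always carries critical-leaf multiplicity at least $\deg(e)-1$.

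Finally I would set up the counting. To each $\gamma\in\mathcal{GOC}^\infty$ I would assign a critical point $c_\gamma\in\gamma$ together with its branch $B_\gamma$, arranged so that the branches chosen for distinct grand orbits are pairwise disjoint; the lemma then attaches to each $\gamma$ at least one unit of critical-leaf multiplicity, and disjointness together with Corollary \ref{ptcrit} yields $\card\,\mathcal{GOC}^\infty\leq\sum_{y\in\Crit\F}\mult(y)=2\deg(\F)-2$.

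The hard part is exactly this disjointness. A single grand orbit contains many critical points whose naive branches overlap, and because grand orbits are two-sided one cannot simply select the ``last'' critical point along a forward orbit (on a recurrent critical cycle the critical points recur forever). I expect to resolve this cycle by cycle: the transient branches feeding a fixed cycle are separated in the tree, so the critical multiplicity they trap is additive, while on each recurrent critical cycle the classical Riemann--Hurwitz bound $2\deg(g)-2$ for the return map $g$ controls the critical-leaf multiplicity held in the branches hanging off $C$. Summing these disjoint local contributions over the distinct cycles, and checking that no critical leaf is counted for two different grand orbits, should recover the global bound; verifying that last non-overlap, in the presence of the grand-orbit (backward as well as forward) identifications, is where the real work lies.
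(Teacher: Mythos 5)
Your overall counting frame (pairwise disjoint open sets, one per infinite critical grand orbit, each forced by Riemann--Hurwitz to contain a critical leaf, compared against the $2\deg(\F)-2$ critical leaves of Corollary \ref{ptcrit}) is the paper's frame, but the step you explicitly defer --- making the chosen sets pairwise disjoint --- \emph{is} the proof, and your sketch of it does not close. Taking for each grand orbit the branch $B_v(e)$ cut off at a critical point itself cannot work as a selection rule: two distinct grand orbits can pass through the same spheres, a single branch can contain critical points of both, and nothing in your text prevents the chosen branches from overlapping or coinciding. Your fallback of summing ``classical'' bounds $2\deg(g_C)-2$ over cycles also fails as stated, since that sum can exceed $2\deg(\F)-2$ (already for two invariant subtrees each carrying a fixed sphere of degree $\deg(\F)$), so no purely cycle-local count can give the global bound. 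The device the paper uses, and which is absent from your proposal, is to slide the branch \emph{along the forward orbit of the critical point until the orbit escapes the attaching points}: because the orbit of $c$ is infinite and $\T^Y$ has only finitely many attaching points, there is a first $k_0$ with $c_{k_0}$ not an attaching point, and the paper's set $B_c$ is the union of the pulled-back branches $\widetilde B_k=B^Y_k\cap F^{-1}(B^Z_{k+1})$ along the tail of the orbit ending at $k_0$. This construction guarantees $B_c\cap\Prep(F)=\emptyset$, and it is exactly that property, combined with disjointness of the orbits and Lemma \ref{identifiable}, which yields $B_c\cap B_{c'}=\emptyset$ for critical points in distinct grand orbits. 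Without some substitute for this orbit-following construction, the disjointness you need remains unproved.

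There is also a pointwise error in your local lemma: the claim that $B_v(e)$ \emph{is} a connected component of $F^{-1}\bigl(B_{F(v)}(F(e))\bigr)$ is false in general, because tree maps can fold. If $u$ is the endpoint of $e$ inside the branch and $e_1\in E_u$ satisfies $f_u(i_u(e_1))=f_u(i_u(e))$, then injectivity of $i_{F(u)}$ forces $F(e_1)=F(e)$, so the far endpoint of $e_1$ maps to $F(v)$, outside the target branch; this is precisely why the paper works with $\widetilde B_k$ rather than with $B^Y_k$, and why its first sub-lemma splits into the cases $\chi=1$ and $\chi\leq 0$. What is true is only that the component $T'$ of $F^{-1}\bigl(B_{F(v)}(F(e))\bigr)$ containing $e$ is \emph{contained} in $B_v(e)$. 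Your conclusion survives after repair: $v\in\partial_{T^Y}T'$ gives $\chi_{T^Y}(T')\leq 1$ by Lemma \ref{carautre}, and Proposition \ref{RH} applied to $T'$ (the object to which Lemma \ref{restric} actually applies) gives $\sum_{y\in\Crit\F\cap T'}\mult(y)\geq \deg(\F|_{\overline{T'}})-1\geq\deg(e)-1\geq 1$. So the local lemma is repairable; the missing global disjointness argument is the genuine gap.
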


\begin{proof}
Let $c\in \Sigma$ be a critical point of the map $f$ with grand orbit in $\mathcal {GOC}^{\infty}$. Then $c$ lies in a sphere ${\cal S}_v$ with $v\in \Prep(F)$.
For $k\geq 0$, we define $v_k:= F^{ k}(v)$ and $c_k:=f^k(c)$. 
We have $\card \{c_k\}=\infty$, so there exists $k_0\geq 1$ such that 
\begin{enumerate}
\item $c_k$ is an attaching point of an edge in ${\cal T}^Y$ for $k <k_0$ and 
\item $c_{k_0}$ is not the attaching point of an edge of $T^Y$
\end{enumerate} (indeed, the number of edges attaching points in ${\cal T}^Y$ is finite).

For $k\in [0,{k_0}-1]$, we define 
\begin{enumerate}
\item $B_k^Y$ to be the branch of $T^Y$ on $v_k$ attached to $c_k$, 
\item $B_{k+1}^Z$ to be the branch of $T^Z$ on $v_{k+1}$ attached to $c_{k+1}$ and 
\item $\widetilde B_k = B_k^Y\cap F^{-1}(B_{k+1}^Z)$. 
\end{enumerate}
Let $k_1\geq 1$ be the minimal integer such that $\widetilde B_k = B_k^Y$ for $k\in [k_1,{k_0}-1]$. We define
\[B_c:=\bigcup_{k_1-1}^{{k_0}-1} \widetilde B_k.\]
Given that $c_{k_0}$ is not an attaching point of $T^Y$, every vertex of $B_c$ is forgotten by an iterate of $F$. In other words, $B_c\cap \Prep(F)=\emptyset$. 

\begin{lemmaint}
The open set $B_c$ contains a critical leaf. 
\end{lemmaint}

\begin{proof}
Either $k_1=0$ and $B_0^Y= \widetilde B_0$. 
From the Riemann-Hurwitz formula, we have 
\begin{align*}1=\chi_{T^Y}(\widetilde B_0) &= \deg(\F:\widetilde B_0\to B_1^Z) \cdot \chi_{T^Z}(B_1^Z) - \mult(\widetilde B_0) \\&\geq \deg(\F:\widetilde B_0\to B_1^Z)- \mult(B_c).\end{align*}

Given that $c$ is a critical point of $f_v$, we have \[\deg(F:\widetilde B_0\to B_1^Z)\geq \deg(v)\geq 2.\] 
So $\mult(B_c)\geq 1$ and $B_c$ contains at least a critical leaf. 

We know that $k_1\geq 1$ and that $\widetilde B_{k_1-1}$ is not a branch. According to Lemma \ref{branch}, we decuct that ${\chi_{T^Y}(\widetilde B_{k_1-1})\leq 0}$. 
From the Riemann-Hurwitz formula, we have
\begin{align*}
0\geq \chi_{T^Y}(\widetilde B_{k_1-1})&= \deg(\F:\widetilde B_{k_1-1}
\to B_{k_1}^Z)\cdot \chi_{T^Z}(B_{k_1}^Z)  - \mult(\widetilde B_{k_1-1})\\& \geq 1- \mult(B_c).
\end{align*}

So $\mult(B_c)\geq 1$ and $B_c$ contains at least a critical leaf.  \end{proof}

\begin{lemmaint}
Let $c\in \Sigma$ and $c'\in \Sigma$ be two attaching points with infinite disjoint orbits. Then $B_c\cap B_{c'}=\emptyset$. 
\end{lemmaint}

\begin{proof}
If $B_c\cap B_{c'}\neq \emptyset$, then $F(B_c)\cap F(B_{c'})\neq \emptyset$ and we can find two integers $k$ and $k'$ such that the branch of $T^Z$ attached to $c_{k}$ intersect the branch of $T^Z$ attached to $c'_{k'}$. In this case, 
\begin{enumerate}
\item Either $v_k=v'_{k'}$ and $c_k=c'_{k'}$, which contradicts the fact that orbits of $c$ and $c'$ are disjoint; 
\item either $v_{k}$ lies in the branch of $T^Z$ attached to $v'_{k'}$. As $\Prep(F)\cap V^Z\subset V^X$, we have $B_{c'}\cap \Prep(F)=\emptyset$ which contradicts Lemma \ref{identifiable}; 
\item or $v'_{k'}$ lies in the branch of $T^Z$ attached to $v_{k}$. As $\Prep(F)\cap V^Z\subset V^X$, we have $B_{c}\cap \Prep(F)=\emptyset$ which contradicts Lemma \ref{identifiable}. 
\end{enumerate}
 \end{proof}

Now we finish proof of Theorem \ref{npcf}.
Let $c_1$, \ldots, $c_N$ be critical points with disjoint and infinite orbits. The open sets $B_{c_1}$, \ldots, $B_{c_N}$ are disjoint and each one contains a critical leaf of $T^Y$. The number of critical leaves counted with multiplicity is $2\deg({\cal F})-2$ so $N\leq 2\deg({\cal F})-2$. 
 \end{proof}

\begin{definition}\label{deftriviall} 
Let $(\F,\T^X)$ be a dynamical system of \tss.
A cycles of spheres is of $(\F,\T^X)$ is not post-critically finite if one of the spheres of this cycle contains a critical point with an infinite orbit.
\end{definition}

\begin{proof}(Theorem \ref{thmkiw1}) 
There is a critical point with grand orbit in $\mathcal {GOC}^{\infty}$ lying on each cycles of spheres which are not post-critically finite so by Theorem \ref{npcf} there are at most $2D-2$ cycles of spheres which are not post-critically finite.
 \end{proof}
 


\section{Convergence notions}\label{chap4}

\label{chapconv}

Recall that we are supposing that trees are {\bf stable}. Here we also require that the trees are {\bf projective} and that all the covers are {\bf holomorphic} in a sense that we define below. We denote by ${\S:=P^1(\C)}$ the Riemann sphere.

In this chapter, we define a notion of convergence on the set of trees of spheres. This notion is not Hausdorff, but in \cite{A1} we show that it corresponds to a Hausdorff topology on the natural quotient of this set under the action of isomorphisms of trees of spheres. 


\subsection{Holomorphic covers}

\begin{definition}[Projective structure]
A projective structure on a \ts $\T$ marked by $X$ is the data for every $v\in IV$ of a projective structure on $\St_v$. 
\end{definition}

According to the Uniformization Theorem, giving a complex structure on $\St_v$ is the same as giving a class of homeomorphisms $\sigma: \St_v \to  \S $ where $\sigma$ is equivalent to $\sigma'$ when $\sigma'\circ\sigma^{-1}$ is a Moebius transformation. Such a $\sigma$ is called a projective chart on $\St_v$.
When the topological sphere $\cal S_v$ has such a projective structure, we will denote it by $\mathbb S_v$.

\begin{definition}[Holomorphic covers]
A cover between trees of spheres $\F:\T^Y\to\T^Z$ with a given projective structure is holomorphic if for all internal vertex $v$, the map $f_v:\S_v\to\S_{F(v)}$ is holomorphic.
\end{definition}

If $f_v$ is holomorphic then its expression in projective charts is a rational map.

When a tree of spheres is compatible to another one, we require that the projective structures on a common sphere are the same.


\subsection{Marking rational maps}\label{herearedef}

 For $d\geq 1$, we denote by $\Rat_d$ the set of rational maps of degree $d$. In particular, $\Aut(\S):=\Rat_1$ is the set of Moebius transformations. This set acts on $\Rat_d$ by conjugacy :
\[\Aut(\S)\times \Rat_d\ni (\phi,f)\mapsto \phi\circ f\circ \phi^{-1}\in \Rat_d.\]  
We denote by $\rat_d$ the quotient of $\Rat_d$ by this action.

\begin{definition}[Marked sphere]
A sphere marked (by $X$) is an injection \[x:X\to \mathbb S.\]
\end{definition}

Each tree of spheres having a single internal vertex $v$ is identified with the marked sphere $i_v$.

\begin{recall}
A portrait ${\bf F}$ of degree $d\geq 2$ is a pair $(F,\deg)$ where 
\begin{enumerate}
\item $F:Y\to Z$ is a map between finite sets $Y$ and $Z$ and
\item $\deg:Y\to \N-\{0\}$ is a function verifying
\[\sum_{a\in Y}\bigl(\deg(a) -1\bigr) = 2d-2\quad\text{and}\quad \sum_{a\in F^{-1}(b)} \deg(a) = d\quad\text{ for all } b\in Z.\] 
\end{enumerate}
\end{recall}

\begin{definition}[Marked rational maps]\label{exfondsph00}
A rational map marked by the portrait ${\bf F}$ is a triple $(f,y,z)$ where
\begin{enumerate}
\item $f\in \Rat_d$
\item $y:Y\to \S$ and $z:Z\to \S$ are marked spheres, 
\item $f\circ y = z\circ F$ on $Y$ and 
\item $\deg_{y(a)}f = \deg(a)$ for $a\in Y$, 
\end{enumerate}
\end{definition}

%
%

If $(f,y,z)$ is marked by ${\bf F}$, we have the following commutative diagram : 

\centerline{
$\xymatrix{
 Y \ar[r]^{y}    \ar[d] _{{ F}} &\S  \ar[d]^{f} \\
      Z \ar[r]_{z}  &\S.
  }$}

Typically, $Z\subset \S$ is a finite set, $F:Y\to Z$ is the restriction of a rational map $F:\S\to \S$ to $Y:=F^{-1}(Z)$, and $\deg(a)$ is the local degree of $F$ at $a$. In this case, the Riemann-Hurwitz formula and the conditions on the function $\deg$ imply that $Z$  contains the set $V_F$ of the critical values of $F$ so that $F:\S-Y\to \S-Z$ is a cover. 


A cover between trees of spheres ${{\cal F}:{\cal T}^Y\to {\cal T}^Z}$ such that $\T^Y$ and $\T^Z$ are marked spheres (with respective unique internal vertices  $v$ and $v'$) is the same data as a branched cover between $\S_v$ and $\S_{v'}$ such that the set of attaching points on $\S_v$ is the pre-image of the set of attaching points on $\S_{v'}$ and contains the branching locus.
Such a cover is called a marked spheres cover. We identify $\F$ with the marked rational map $(f_v,a_v^Y,a_{v'}^Z)$.

\begin{definition}[Dynamically marked rational map]\label{defdefdef06}
A rational map dynamically marked by  $({\bf F},X)$ is a rational map $(f,y,z)$ marked by  ${\bf F}$ such that  $y|_X=z|_X$. 
\end{definition}

We denote by 
 $\Rat_{{\bf F},X}$ the set of rational maps dynamically marked by $({\bf F},X)$.

Let $(\F:\T^Y\to\T^Z,\T^X)$ be a dynamical system such that $\F$ is a cover of marked spheres. The tree $T^X$ has a unique internal vertex and given that $T^Y$ and $T^Z$ have only one internal vertex, then the one of $T^X$ is the same as $v$, the one of $T^Y$, and of $T^Z$. Then we identify $(\F,\T^X)$ and the dynamically marked rational map $(f_v,a_v^Y,a_{v}^Z)$. We say that it is a dynamical system of spheres marked by ${\bf F}:= (F,\deg)$.



\subsection{Convergence of marked spheres}


\begin{definition}
A sequence of marked spheres $x_n:X\to{\mathbb S}_n$
  converges to a \ts ${\cal T}^X$
  if for all internal vertex $v$
 of $\T^X$ , there exists a (projective) isomorphism $\phi_{n,v}:{\mathbb S}_n\to{\S}_v$
    such that $\phi_{n,v} \circ x_n$
     converges to $a_v$. 
     \end{definition}
(We prefer to use the notation $\S_n$ instead of $\S$ because the $\S_n$ can be distinct.)
We will use the notation ${x}_n\to {\cal T}^X$ or $\displaystyle {x}_n\underset{\phi_n}\longrightarrow {\cal T}^X$. 

In this paper we assume the following result that appears in \cite{FT,A,A1}:

\begin{theorem}\cite[Corollaire 4.22]{A}\label{thmcomp}
Given a finite set $X$ with at least three elements, every sequence of spheres marked by $X$ converges, after extracting a subsequence, to a tree of spheres marked by $X$.
\end{theorem}

\begin{example}Suppose that $X:=\{\chi_1,\chi_2,\chi_3,\chi_4\}$. For all $n\geq 1$, let ${x}_n:X\to \S$ be the marked sphere defined by :
\[x_n(\chi_1):=0, \quad x_n(\chi_2):=1,\quad x_n(\chi_3):=n\quad\text{and}\quad x_n(\chi_4):=\infty.\]
Let ${ \T}^X$ be the tree of projective spheres marked by $X$ with two distinct internal vertices $v$ and $v'$ of valence $3$ with $\S_v:=\S_{v'}:=\S,$
\[a_v(\chi_1):=0,\quad a_v(\chi_2):=1,\quad a_v(\chi_3):=a_v(\chi_4):=\infty,\]
\[a_{v'}(\chi_1):=a_{v'}(\chi_2):=0,\quad a_{v'}(\chi_3):=1\quad\text{and}\quad a_{v'}(\chi_4):=\infty.\]

Considering the isomorphisms $\phi_{n,v}:\S\to \S_v$ and $\phi_{n,v'}:\S\to \S_{v'}$ defined by : 
\[\phi_{n,v}(z):=z\quad\text{and}\quad \phi_{n,v'}(z):=z/n \text{ (cf Figure \ref{remfond}), }\]
we prove that $\displaystyle  {x}_n\underset{\phi_n}\longrightarrow { \T}^X$. 
\end{example}

 \begin{figure}
  \centerline{\includegraphics[width=7cm]{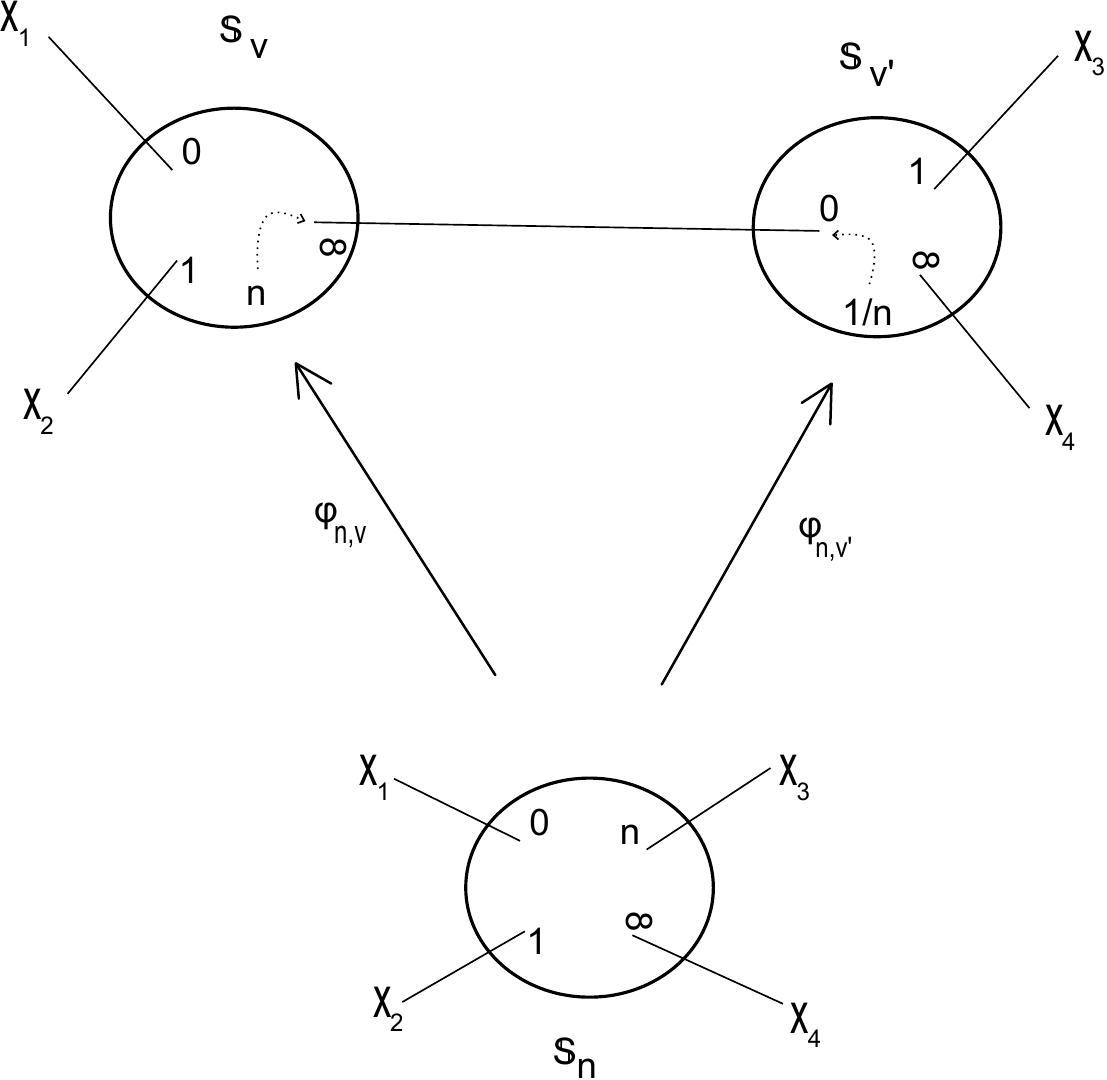}}
   \caption{ }
\label{remfond} \end{figure}

\begin{lemma} \label{noncomp}
Let $v$ and $v'$  be two distinct internal vertices of $\T^X$,  
and consider a sequence of marked spheres $(x_n)_n$ such that $x_n\underset{\phi_n}\longrightarrow \T^X$. 
Then the sequence of isomorphisms $(\phi_{n,v'} \circ \phi_{n,v}^{-1})_n$
converges locally uniformly outside $i_v(v')$ to the constant $ i_{v'}(v)$. 
\end{lemma}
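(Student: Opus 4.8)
The plan is to study the Moebius transformations $g_n := \phi_{n,v'}\circ\phi_{n,v}^{-1}:\S_v\to\S_{v'}$ directly, to extract the degenerate limit forced on any sequence of Moebius maps that does not stay in a compact family, and then to read off the exceptional point and the limiting constant from the combinatorics of the tree. Set $p:=i_v(v')$ and $q:=i_{v'}(v)$, and let $e_0\in E_v$ be the edge with $v'\in B_v(e_0)$ (so $p=i_v(e_0)$) and $e_1\in E_{v'}$ the edge with $v\in B_{v'}(e_1)$ (so $q=i_{v'}(e_1)$). The starting point is the identity $g_n\bigl(\phi_{n,v}(x_n(x))\bigr)=\phi_{n,v'}(x_n(x))$ valid for every leaf $x\in X$, together with the hypotheses $\phi_{n,v}(x_n(x))\to a_v(x)$ and $\phi_{n,v'}(x_n(x))\to a_v'(x)$. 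Thus $g_n$ carries a point tending to $a_v(x)$ onto a point tending to $a_{v'}(x)$.

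The combinatorial heart of the argument is the following relation between the two markings: since $v\neq v'$, a leaf $x$ lying outside $B_v(e_0)$ must lie in $B_{v'}(e_1)$, because its path to $v'$ runs through $v$ and enters $v'$ along $e_1$; hence $a_v(x)\neq p$ forces $a_{v'}(x)=q$, and symmetrically $a_{v'}(x)\neq q$ forces $a_v(x)=p$. Using stability ($\val(v),\val(v')\geq 3$) I would choose two leaves $x_1,x_2$ in two distinct branches at $v$ other than $B_v(e_0)$, so that $a_v(x_1),a_v(x_2)$ are distinct points different from $p$ while $a_{v'}(x_1)=a_{v'}(x_2)=q$; and one leaf $x_3$ in a branch at $v'$ other than $B_{v'}(e_1)$, so that $a_v(x_3)=p$ while $a_{v'}(x_3)\neq q$.

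Next I would invoke the standard dichotomy for a sequence of Moebius transformations: after passing to a subsequence, either $g_n$ converges locally uniformly to a Moebius transformation $g$, or there are points $p',q'$ with $g_n\to q'$ locally uniformly on $\S_v\setminus\{p'\}$ (this follows from the Cartan, or $KAK$, decomposition). The first alternative is impossible, since it would give $g(a_v(x_1))=q=g(a_v(x_2))$ with $a_v(x_1)\neq a_v(x_2)$, contradicting the injectivity of $g$. In the degenerate case, since $a_v(x_1)\neq a_v(x_2)$ at least one of them, say $a_v(x_1)$, differs from $p'$; evaluating along $\phi_{n,v}(x_n(x_1))\to a_v(x_1)$ then gives $q'=q$. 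Finally, if $p'$ were different from $p$, then evaluating along $\phi_{n,v}(x_n(x_3))\to a_v(x_3)=p\neq p'$ would force $\phi_{n,v'}(x_n(x_3))\to q'=q$, contradicting $a_{v'}(x_3)\neq q$; hence $p'=p$. Thus every subsequence admits a further subsequence along which $g_n\to q$ locally uniformly on $\S_v\setminus\{p\}$, and since the limit data $(p,q)=(i_v(v'),i_{v'}(v))$ is independent of the subsequence, the whole sequence converges as claimed.

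I expect the main obstacle to be the clean handling of the degeneration of $g_n$: one must be certain that a sequence of Moebius maps leaving every compact family genuinely collapses to a single constant off one exceptional point, and then match that analytic exceptional point $p'$ and constant $q'$ with the combinatorial points $i_v(v')$ and $i_{v'}(v)$. The bridge is precisely the branch relation of the second paragraph, which is what forces the two marked limits $a_v$ and $a_{v'}$ to be ``antipodal'' in the sense needed to pin down both $p'$ and $q'$.
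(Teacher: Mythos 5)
Your proof is correct, and it reaches the conclusion by a genuinely different analytic route than the paper, built on the same combinatorial backbone. Both arguments exploit the same key fact about the tree: every leaf outside the branch $B_v(v')$ satisfies $a_{v'}(x)=i_{v'}(v)$, and symmetrically, so stability provides marked points $x_1,x_2$ (two distinct branches at $v$ avoiding $v'$) and $x_3$ (a branch at $v'$ avoiding $v$) that ``test'' the maps $\phi_{n,v'}\circ\phi_{n,v}^{-1}$. Where you diverge is in the analytic engine. The paper never invokes any compactness or degeneration theory for $\mathrm{PSL}(2,\C)$: it first replaces $\phi_{n,v}$ and $\phi_{n,v'}$ by post-compositions with automorphisms tending to the identity so that the marked points sit exactly at their limits, then chooses projective charts sending these points to $0,1,\infty$; in these charts the transition map becomes literally $M_n(z)=z/\lambda_n$ with $\lambda_n\to\infty$, and the full-sequence convergence to the constant drops out of this explicit formula with no subsequence extraction (this is also why the paper uses four marked points rather than your three: it needs to normalize a full chart on each side). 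You instead invoke the $KAK$ dichotomy for divergent sequences of Moebius transformations, rule out the compact alternative by injectivity (two test points forced to the same limit $q$), pin down the exceptional point $p'$ and the limit constant $q'$ against the marked points, and finish with the sub-subsequence uniqueness argument, which is legitimate since locally uniform convergence on $\S_v\setminus\{p\}$ is metrizable. Your approach is softer and more conceptual --- it makes transparent that the mechanism is the collapse of degenerating Moebius sequences, and it avoids any normalization or choice of charts --- at the cost of relying on the (standard, but nontrivial) degeneration dichotomy and of subsequence bookkeeping; the paper's computation is self-contained, yields the explicit degeneration rate $\lambda_n=\sigma(\xi_{4,n})$, and gives convergence of the whole sequence directly.
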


\begin{proof}
Each vertex $v$ and $v'$ has three edges and every branch has at least a leaf, so there exist four marked points $\chi_1,\chi_2,\chi_3,\chi_4\in X$ 
such that $v$ separates $\chi_1$, $\chi_2$ and $v'$, and the vertex $v'$ separates $\chi_3$, $\chi_4$ and $v$.

We define for $j\in \{1,2,3,4\}$, 
\[\xi_j:=a_v(\chi_j),\quad \xi'_j:=a_{v'}(\chi_j),\quad 
\xi_{j,n}:=\phi_{n,v}\circ x_n(\chi_j)\quad \text{and}\quad 
\xi'_{j,n}:=\phi_{n,v'}\circ x_n(\chi_j).\]
From the hypothesis, $\xi_{j,n}\to \xi_j$ and $\xi'_{j,n}\to \xi'_j$ when $n\to \infty$. Moreover, $\xi_3=\xi_4=i_v(v')$ and $\xi'_1=\xi'_2=i_{v'}(v)$. 
Even if we must post-compose $\phi_{n,v}$ and $\phi_{n,v'}$ by automorphisms of $\S_v$ and $\S_{v'}$ that are converging to the identity when $n\to \infty$ and don't change the limit of $\phi_{n,v'}\circ \phi_{n,v}^{-1}$, we can suppose that for all $n$,
\[\xi_{1,n}=\xi_1,~\xi_{2,n}=\xi_2,~\xi_{3,n}=\xi_3,~\xi'_{1,n}=\xi'_1,~\xi'_{3,n}=\xi'_3\text{ and }\xi'_{4,n}=\xi'_4.\]

Now we consider  the projective charts $\sigma$ on $\S_v$ and $\sigma'$ on $\S_{v'}$ defined by :
\begin{enumerate}
\item $\sigma(\xi_1)=0$, $\sigma(\xi_2)=1$ and $\sigma(\xi_3)=\infty$;
\item $\sigma'(\xi'_1)=0$, $\sigma'(\xi'_4)=1$ and $\sigma'(\xi'_3)=\infty$.
\end{enumerate}
The Moebius transformation $M_n:=\sigma'\circ \phi_{n,v'}\circ \phi_{n,v}^{-1}\circ \sigma^{-1}$ fixes $0$ and $\infty$ and maps $\sigma(\xi_4)$ to $1$. Thus 
\[M_n(z) = \frac{z}{\lambda_n}\quad\text{with}\quad
\sigma(\xi_{4,n}) \underset{n\to \infty}\longrightarrow \infty.\]
Consequently, $M_n$ converges locally uniformly outside infinity to the constant map equal to zero. Then, $\phi_{n,v'}\circ \phi_{n,v}^{-1}={\sigma'}^{-1}\circ M_n\circ\sigma$ converges locally uniformly to the constant $({\sigma'})^{-1}(0)=i_{v'}(v)$ outside $\sigma^{-1}(\infty)=i_v(v')$. 
 \end{proof}


\subsection{Convergence of marked spheres covers}

To each marked rational map $(f,y,z)$, we can associate a cover between trees of spheres from a sphere marked by $Y$, via the map $y$, to a sphere marked by $Z$ via the map $z$.


\begin{definition}[Non dynamical convergence]
Let ${ \F}:{\T}^Y\to { \T}^Z$ be a cover between trees of spheres of portrait ${\bf F}$. A sequence $(f_n,y_n,z_n)_n$ of marked spheres covers converges to ${ \F}$ if their portrait is ${\bf F}$ and if for all pair of internal vertices $v$ and $w:=F(v)$, there exists sequences of isomorphisms $\phi_{n,v}^Y:\S_n^Y\to \S_v$ and $\phi_{n,w}^Z:\S_n^Z\to \S_w$ such that 
\begin{enumerate}
\item $\phi_{n,v}^Y\circ y_n:Y\to \S_v$ converges to $a_v^Y:Y\to \S_v$, 
\item $\phi_{n,w}^Z\circ z_n:Z\to \S_w$ converges to $a_w^Z:Z\to \S_w$ and 
\item $\phi_{n,w}^Z\circ f_n\circ (\phi_{n,v}^Y )^{-1}:\S_v\to \S_w$ converges locally uniformly outside $Y_v$ to ${f_v:\S_v\to \S_w}$. 
\end{enumerate}
\end{definition}

We use the notation $(f_n,y_n,z_n)\rightarrow \F$ or $(f_n,y_n,z_n)\underset{(\phi^Y_n,\phi^Z_n)}\longrightarrow  \F.$

In this paper we assume the following result appearing in \cite{A1}, as a reformulation of \cite[Th\'eor\`eme 5.9]{A}.

\begin{theorem}\label{thmcomp00}
Let $y_n,z_n$ be two sequences of spheres marked respectively by the finite sets $Y$ and $Z$ each one containing at least three elements and converging to the trees of spheres $\T^Y$ and $\T^Z$. 

Then, every sequence of marked spheres covers $(f_n,y_n,z_n)_n$ of a given portrait converges to a cover between the trees of spheres $\T^Y$ and $\T^Z$.
\end{theorem}

Note that we have the following lemma.

\begin{lemma}\label{cvu}
Let ${\cal F}:{\cal T}^Y\to{\cal T}^Z$ be a cover between trees of spheres with portrait ${\bf F}$ and of degree $D$. Let
$v\in IV^Y$ with $\deg(v)=D$ and ${(f_n,y_n,z_n)_n}$ be a sequence of marked spheres covers that satisfies $\displaystyle (f_n,y_n,z_n)\underset{\phi^Y_n,\phi^Z_n}\longrightarrow{\cal F}$. Then the sequence $\phi_{n,F(v)}^Z\circ f_n\circ (\phi_{n,v}^Y)^{-1}:\S_v\to \S_{F(v)}$ converges uniformly
to $f_v:\S_v\to \S_{F(v)}$.
\end{lemma}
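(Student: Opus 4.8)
The plan is to show that the rescaled maps $g_n := \phi^Z_{n,F(v)}\circ f_n\circ (\phi^Y_{n,v})^{-1}:\S_v\to\S_{F(v)}$, which by the definition of convergence already tend to $f_v$ locally uniformly on $\S_v\setminus Y_v$, in fact converge uniformly on the whole sphere $\S_v$. The only possible obstruction is the behaviour near the finitely many attaching points of $Y_v$, and the whole point of the hypothesis $\deg(v)=D$ is to forbid any loss of degree there. First I record the arithmetic that makes the degrees match: since $\mathcal F$ realises the portrait ${\bf F}$, the degree $d$ of ${\bf F}$ equals the degree $D$ of $\mathcal F$ (compare $\sum_{a\in Y}(\deg(a)-1)=2d-2$ with Corollary \ref{ptcrit}, which gives $2D-2$ critical leaves counted with multiplicity), so each $f_n$ lies in $\Rat_d=\Rat_D$; as the $\phi$'s are isomorphisms, each $g_n$ is a rational map of degree exactly $D$, and $\deg f_v=\deg(v)=D$ as well.

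The key localisation step uses Hurwitz's theorem. Choose a value $c\in\S_{F(v)}$ avoiding the finite set consisting of $f_v(Y_v)$ and of the critical values of $f_v$. Then $f_v^{-1}(c)$ consists of $D$ distinct points $\zeta_1,\dots,\zeta_D$, all lying in $\S_v\setminus Y_v$. Enclosing each $\zeta_j$ in a small disk contained in $\S_v\setminus Y_v$ and using that $g_n\to f_v$ uniformly on these disks, Hurwitz's theorem shows that for $n$ large $g_n$ has exactly one preimage of $c$ in each disk, hence at least $D$ preimages there. As $\deg g_n=D$, these are all the preimages, so $g_n$ omits the value $c$ outside the union of these disks. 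Repeating this for three distinct such values $c_1,c_2,c_3$ produces, for each attaching point $a\in Y_v$, a disk neighbourhood $U_a$ (chosen disjoint from all the finitely many preimage disks) on which $g_n$ omits $c_1,c_2,c_3$ for all large $n$.

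By Montel's theorem the family $\{g_n|_{U_a}\}$ is then normal. Any locally uniform limit of a subsequence is holomorphic on $U_a$ and coincides with $f_v$ on the punctured neighbourhood $U_a\setminus\{a\}$, where convergence was already known; by continuity it equals $f_v$ on all of $U_a$. Since every subsequence has a sub-subsequence converging locally uniformly to $f_v$, the full sequence $g_n\to f_v$ locally uniformly on each $U_a$, in particular uniformly on a smaller closed disk around $a$. Covering $\S_v$ by these finitely many neighbourhoods of the attaching points together with the compact complement of small neighbourhoods of $Y_v$ (on which convergence is uniform by the definition of convergence) yields uniform convergence of $g_n$ to $f_v$ on all of $\S_v$.

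The main obstacle is precisely ruling out the escape of degree to the points of $Y_v$; this is where $\deg(v)=D$ is indispensable, since otherwise the limit could have degree strictly smaller than that of the $g_n$ and the convergence would remain merely local. The same phenomenon can be packaged differently by working in the natural compactification of $\Rat_D$: writing $g_n=[P_n:Q_n]$ in normalised homogeneous coordinates and extracting convergent coefficients, a common factor in the limit would force the algebraic limit to be a rational map of degree $<D$, contradicting that it must agree with $f_v$ (of degree $D$) off $Y_v$; hence no degeneration occurs and the convergence is uniform.
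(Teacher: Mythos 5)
Your proof is correct, and its engine is the same one that drives the paper's proof: since the portrait forces $\deg f_n = D$ and the hypothesis gives $\deg f_v = \deg(v) = D$, a Hurwitz-type count of preimages of a well-chosen value shows that all $D$ of them are trapped near the preimages under $f_v$, which lie away from $Y_v$, so no degree can escape to the attaching points. Where the two arguments genuinely diverge is in what value is tracked and how the conclusion near $Y_v$ is extracted. The paper chooses projective charts $\sigma_v$ on $\S_v$ and $\sigma_w$ on $\S_{F(v)}$ so that $\infty$ avoids $\sigma_v(Y_v)$ and $\sigma_w(Z_{F(v)})$, tracks the single value $\infty$ (i.e.\ the poles of $g_n$), concludes that $g_n$ is pole-free, hence holomorphic, on a small neighbourhood $U$ of $\sigma_v(Y_v)$, and then finishes with the maximum modulus principle applied to $g_n-g$: smallness on $\partial U$ propagates to all of $U$, giving uniform convergence near the attaching points in one quantitative stroke. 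You instead track three generic values $c_1,c_2,c_3$, conclude that $g_n$ omits them on a neighbourhood of each attaching point, and then invoke Montel's theorem together with the subsequence-uniqueness argument to identify every normal limit with $f_v$. Both routes are valid; the paper's finish is more elementary and direct (one value, one estimate, no normal-families machinery or extraction of subsequences), while yours is chart-free, works with sphere-valued maps throughout, and generalizes easily to situations where a linear difference $g_n-g$ makes no sense --- at the cost of appealing to Montel. Your closing remark about homogeneous coordinates and degeneration in the compactification of $\Rat_D$ is a third, equivalent packaging of the same non-escape-of-degree phenomenon, but as written it is only a sketch; the Hurwitz--Montel argument is what actually carries your proof.
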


\begin{proof}
We define $w:=F(v)$. We choose the projective charts ${\sigma_v:\S_v\to \S}$ and $\sigma_w:\S_w\to \S$ such that no point of $Y_v$ or of
$Z_w$ maps to infinity. We define
\[g_n:=\sigma_w\circ \phi_{n,w}\circ f_n\circ \phi_{n,v}^{-1}\circ
\sigma_v^{-1}\quad
\text{and}\quad g:=\sigma_w\circ f_v\circ \sigma_v^{-1}.\]
We supposed that the sequence $(g_n)_n$ converges locally uniformly to $g$ out
of $\sigma_v(Y_v)$. All the $D$ poles of $g_n$ (counting
multiplicities) converge to the $D$ poles of $g$. In particular, if $U$ is a sufficiently small neighborhood of
$\sigma_v(Y_v)$, then
\begin{enumerate}
\item for $n$ large enough, $g_n$ is holomorphe without poles in $U$ and
\item $g_n-g$ converges uniformly to $0$ on the boundary of $U$.
\end{enumerate}
By the maximum modulus principle, $(g_n-g)_n$ converges uniformly
to $0$ in $U$. So $(g_n)_n$ converges locally
uniformly to $g$ in the neighborhood of points of $\S$ and given that $\S$
is compact, then $g_n$ converges uniformly to $g$ on $\S$.
 \end{proof}

\subsection{Dynamical convergence of marked spheres covers}

\begin{definition}[Dynamical convergence]\label{defcvdyn}
Let $({\F}:{ \T}^Y\to { \T}^Z,{ \T}^X)$ be a dynamical system of trees of spheres with portrait ${\bf F}$. A sequence $(f_n,y_n,z_n)_n$ of dynamical systems between spheres marked by $({\bf F},X)$
converges to $(\F,\T^X)$ if  
\[\displaystyle (f_n,y_n,z_n)\underset{\phi_n^Y,\phi_n^Z}\longrightarrow{ \F}\quad\text{with}\quad\phi_{n,v}^Y=\phi_{n,v}^Z\] for all vertex $v\in IV^X$. We say that $(\F,\T^X)$ is dynamically approximable by $(f_n,y_n,z_n)_n$.
\end{definition}
We use the notation $\displaystyle (f_n,y_n,z_n)\overset{\lhd}{\underset{\phi_n^Y,\phi_n^Z}\longrightarrow}{ \F}$ or simply $\displaystyle (f_n,y_n,z_n)\overset{\lhd}{\longrightarrow}{ \F}.$

We denote by $\partial\Rat_{{\bf F},X}$ the set of dynamical systems of trees of spheres which are approximable by a sequence in $\Rat_{{\bf F},X}$ and which are not in $\Rat_{{\bf F},X}$.
We use the notation $\phi_n$ instead of $\phi^\star_n$ when there is not possible confusion.

Note that requiring a dynamical convergence is not something very strong because we can prove the following:

\begin{lemma}\label{dynconvbis}
If $\displaystyle(f_n,y_n,z_n)\underset{\phi_n^Y,\phi_n^Z}\longrightarrow{ \F}$ and $(\F,\T^X)\in\Rat_{{\bf F},X}$, then there exists $(\tilde\phi_n^Z)_n$
such that $\displaystyle (f_n,y_n,z_n)\overset{\lhd}{\underset{\phi_n^Y,\tilde\phi_n^Z}\longrightarrow}{ \tilde\F}$ with $\tilde F=F.$
\end{lemma}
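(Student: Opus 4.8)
The plan is to exploit that the hypothesis $(\F,\T^X)\in\Rat_{{\bf F},X}$ forces the limit to be non-degenerate: each of $\T^X$, $\T^Y$, $\T^Z$ reduces to a single internal vertex, which is common to all three. Denote it by $v$, write $\S_v$ for the common sphere $\S_v^X=\S_v^Y=\S_v^Z$, and note $f_v:\S_v\to\S_v$ with $F(v)=v$. Since $IV^X=\{v\}$, the only thing separating the given (non-dynamical) convergence from a dynamical one is the requirement $\phi_{n,v}^Y=\phi_{n,v}^Z$. I would therefore simply set $\tilde\phi_{n,v}^Z:=\phi_{n,v}^Y$, keep the source charts $\phi_n^Y$, and check that the three defining conditions of convergence survive this replacement with an unchanged limit. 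Because the source sphere of each $f_n$ equals its target sphere ($\S_n^Y=\S_n^Z=\S$, as $(f_n,y_n,z_n)\in\Rat_{{\bf F},X}$), this will automatically yield $\tilde\phi_{n,v}^Y=\tilde\phi_{n,v}^Z$, hence dynamical convergence, and in particular $\tilde F=F$.

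The key step is to compare the two target charts through the automorphism $M_n:=\phi_{n,v}^Z\circ(\phi_{n,v}^Y)^{-1}\in\Aut(\S_v)$ and to prove that $M_n\to\Id$ uniformly on $\S_v$. First I would pick three distinct marked points $x_1,x_2,x_3\in X$ (possible since $X$ has at least three elements); because $\T^X$ is a marked sphere, $a_v^X$ is injective, so the points $\alpha_i:=a_v^X(x_i)$ are distinct. Using that $(f_n,y_n,z_n)\in\Rat_{{\bf F},X}$ gives $y_n|_X=z_n|_X$, a direct computation shows $M_n\bigl(\phi_{n,v}^Y(y_n(x_i))\bigr)=\phi_{n,v}^Z(z_n(x_i))$, where by the convergence hypotheses the source points $\phi_{n,v}^Y(y_n(x_i))\to\alpha_i$ and the image points $\phi_{n,v}^Z(z_n(x_i))\to a_v^Z(x_i)=\alpha_i$ (the last equality because $\T^X\lhd\T^Z$). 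Thus $M_n$ is the unique Moebius transformation carrying one triple of distinct points to another, both triples converging to $(\alpha_1,\alpha_2,\alpha_3)$; by continuity of the Moebius transformation determined by three point-image pairs, $M_n\to\Id$ uniformly on the compact sphere $\S_v$. This is the step I expect to be the crux, even though it is classical.

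Finally I would transport the remaining two conditions through $M_n^{-1}$. Writing $\tilde\phi_{n,v}^Z=\phi_{n,v}^Y=M_n^{-1}\circ\phi_{n,v}^Z$, the marking sequence becomes $\tilde\phi_{n,v}^Z\circ z_n=M_n^{-1}\circ(\phi_{n,v}^Z\circ z_n)$ and the conjugated map becomes $\tilde\phi_{n,v}^Z\circ f_n\circ(\phi_{n,v}^Y)^{-1}=M_n^{-1}\circ\bigl(\phi_{n,v}^Z\circ f_n\circ(\phi_{n,v}^Y)^{-1}\bigr)$. Since $\phi_{n,v}^Z\circ z_n\to a_v^Z$ and $\phi_{n,v}^Z\circ f_n\circ(\phi_{n,v}^Y)^{-1}\to f_v$ locally uniformly outside $Y_v$, while $M_n^{-1}\to\Id$ uniformly on $\S_v$, both limits are preserved: $\tilde\phi_{n,v}^Z\circ z_n\to a_v^Z$ and $\tilde\phi_{n,v}^Z\circ f_n\circ(\phi_{n,v}^Y)^{-1}\to f_v$. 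The source condition $\phi_{n,v}^Y\circ y_n\to a_v^Y$ is untouched. Hence $(f_n,y_n,z_n)$ converges to $\tilde\F$ with $\tilde\phi_{n,v}^Y=\tilde\phi_{n,v}^Z$, i.e. dynamically, and in fact $\tilde\F=\F$, so in particular $\tilde F=F$, as required.
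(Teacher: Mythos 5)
Your proof is correct for the statement as written, and its engine is the same as the paper's: replace the target chart at each vertex of $IV^X$ by the source chart $\phi_{n,v}^Y$, and control the effect of this swap through the transition Moebius transformations, which converge because they are pinned by (at least) three common marked points. The differences are ones of scope and sharpness. You reduce to a single common internal vertex by reading $(\F,\T^X)\in\Rat_{{\bf F},X}$ literally; the paper's own proof instead performs the swap at \emph{every} $w\in IV^X$ of a general tree $\T^X$ --- and this generality is what is actually used later (in the proof of Theorem \ref{intro3} the lemma is applied to a limit $(\F,\T^X)$ produced by Theorem \ref{thmcomp00}, which is a genuine tree of spheres; the hypothesis that really belongs in the statement is $(f_n,y_n,z_n)_n\in\Rat_{{\bf F},X}$, i.e. $y_n|_X=z_n|_X$, which you correctly identified as the indispensable ingredient). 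Fortunately your argument generalizes verbatim: at a vertex $w\in IV^X$ one can no longer invoke injectivity of $a_w^X$, but stability of $T^X$ gives $w$ valence at least three and every branch contains a leaf, so there exist $x_1,x_2,x_3\in X$ with distinct attaching points $a_w^X(x_i)\in X_w$, and your three-point argument runs at each such $w$ separately. Conversely, your conclusion is sharper than the paper's: you prove that the transition maps tend to the identity, hence $\tilde\F=\F$, whereas the paper only records convergence of $\phi_{n,w}^Y\circ(\phi_{n,w}^Z)^{-1}$ to some Moebius transformation $M$ and accordingly claims only $\tilde F=F$, with the limiting sphere maps changed to $M\circ f_v$. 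Your sharper form is in fact the right one: since $M$ fixes the three limit attaching points it must equal $\Id$, and this is what guarantees that $(\tilde\F,\T^X)$ is again a dynamical system compatible with the same $\T^X$, as the notion of dynamical convergence requires.
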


\begin{proof}
It is sufficient to change for every $v\in IV^X$ the map $\phi_{n,v}^Z$ for $\phi_{n,v}^X$ in the collection $\phi_n^Z$. 

Indeed, take $w\in IV^X$.
We have $a_w^X=a_w^Z|_X $ so as $X_w$ contains at least three elements, we deduce that $\phi_{n,w}^X\circ (\phi_{n,w}^Z)^{-1}$ converges uniformly to a Moebius transformation $M$. Then, as $w=F(v)$, the map $\phi_{n,w}^Z\circ f_n\circ (\phi_{n,v}^Y )^{-1}$ converges locally uniformly outside $Y_v$ to $f_v$, it is the same for $\phi_{n,w}^X\circ f_n\circ (\phi_{n,v}^Y )^{-1}$ which converges uniformly to $M\circ f_v$.
 \end{proof}

\begin{corollary}[of Lemma \ref{cvu}]\label{cvuutil}
Let $({\cal F},{\cal T}^X)$ be a dynamical system of trees of spheres of degree $d$, dynamically approximable by $(f_n,y_n,z_n)_n$. Suppose that $v\in IV^X$ is a fixed vertex such that $\deg(v) = d$. Then the sequence $[f_n]\in \rat_d$ converges to the conjugacy class $[f_v]\in \rat_d$. 
\end{corollary}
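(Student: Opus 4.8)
The plan is to reduce the statement directly to Lemma~\ref{cvu} applied at the vertex $v$, where the hypothesis $\deg(v)=d$ is exactly the condition $\deg(v)=D$ of that lemma, since here $D=\deg(\F)=d$. First I would record that a fixed vertex satisfies $F(v)=v$, so $f_v\colon\S_v\to\S_v$ is a self-map and the charts $\phi^Z_{n,F(v)}$ and $\phi^Z_{n,v}$ coincide. Next, because the convergence is dynamical and $v\in IV^X$, Definition~\ref{defcvdyn} forces $\phi^Y_{n,v}=\phi^Z_{n,v}$; I write $\phi_n$ for this common isomorphism $\S\to\S_v$. Lemma~\ref{cvu} then applies verbatim and yields that
\[
\phi_n\circ f_n\circ\phi_n^{-1}\colon\S_v\to\S_v
\]
converges uniformly to $f_v$.

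Second, I would pass to a projective chart $\sigma_v\colon\S_v\to\S$. Since $(f_n,y_n,z_n)\in\Rat_{{\bf F},X}$, the map $f_n$ is a genuine rational map of $\S$ and $M_n:=\sigma_v\circ\phi_n$ is a Moebius transformation of $\S$. Conjugating and using that composition with the fixed homeomorphism $\sigma_v$ preserves uniform convergence, the map $M_n\circ f_n\circ M_n^{-1}=\sigma_v\circ(\phi_n\circ f_n\circ\phi_n^{-1})\circ\sigma_v^{-1}$ converges uniformly on $\S$ to $\sigma_v\circ f_v\circ\sigma_v^{-1}$. Thus a Moebius representative of the conjugacy class $[f_n]$ converges uniformly to a representative of $[f_v]$.

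Finally, I would check that this uniform convergence actually takes place inside $\Rat_d$ and projects correctly to $\rat_d$. Because $\deg(v)=d$, the limit $\sigma_v\circ f_v\circ\sigma_v^{-1}$ is a rational map of full degree $d$, so no degree drop occurs: uniform convergence of degree-$d$ rational maps toward a degree-$d$ limit forces the $d$ poles to converge to the $d$ poles of the limit (exactly as exploited in the proof of Lemma~\ref{cvu}), hence convergence of the defining data in $\Rat_d$. Applying the continuous quotient map $\Rat_d\to\rat_d$, and using that $M_n$ is a Moebius transformation so that $[M_n\circ f_n\circ M_n^{-1}]=[f_n]$, gives $[f_n]\to[f_v]$ in $\rat_d$. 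The main obstacle is precisely this last verification: it is the hypothesis $\deg(v)=d$ that prevents a loss of degree in the limit and thereby keeps the limiting map inside $\Rat_d$ rather than in a stratum of smaller degree, which is what makes the convergence genuinely take place in $\rat_d$.
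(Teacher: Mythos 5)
Your proof is correct and follows exactly the route the paper intends: the corollary is stated without a written proof precisely because it is the immediate combination of the dynamical-convergence condition $\phi^Y_{n,v}=\phi^Z_{n,v}$ at the fixed vertex $v\in IV^X$ with the uniform convergence given by Lemma \ref{cvu}, followed by passing to a chart and noting that the hypothesis $\deg(v)=d$ keeps the limit in $\Rat_d$ so the conjugacy classes converge in $\rat_d$. Your write-up simply makes these implicit steps explicit, and they are all sound.
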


\begin{lemma}\label{convit}
Let $(\F,\T^X)\in\Rat_{{\bf F},X}$ be dynamically approximable by $(f_n,y_n,z_n)_n$. If $v\in IV(F^k)$ and if $w:=F^k(v)$, then $(\phi_{n,w}\circ f_n^k\circ \phi_{n,v}^{-1})_n$ converges locally uniformly to $f_v^k$ outside a finite number of points.
\end{lemma}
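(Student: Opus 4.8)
The plan is to argue by induction on $k$, reducing the statement about the $k$-th iterate to a composition of the one-step convergence with the $(k{-}1)$-step convergence. For the base case $k=1$ we have $v\in IV(F)=IV^X$ and $w=F(v)$. Since $v\in IV^X$, the common isomorphism $\phi_{n,v}$ equals $\phi_{n,v}^Y$, so the last clause of the (non-)dynamical convergence built into the hypothesis (cf. Definition \ref{defcvdyn}) says precisely that $\phi_{n,w}\circ f_n\circ\phi_{n,v}^{-1}$ converges locally uniformly outside the finite set $Y_v$ to $f_v=f_v^1$.

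For the inductive step I assume the statement for $k$ and take $v\in IV(F^{k+1})$. Then $v\in IV(F^k)$ and $u:=F^k(v)\in IV^X$, and I set $w:=F^{k+1}(v)=F(u)$. Inserting $\phi_{n,u}^{-1}\circ\phi_{n,u}=\mathrm{id}$, which is legitimate because $u\in IV^X$ so that $\phi_{n,u}$ is the common isomorphism of the dynamical convergence, I would write
$$\phi_{n,w}\circ f_n^{k+1}\circ\phi_{n,v}^{-1}=\bigl(\phi_{n,w}\circ f_n\circ\phi_{n,u}^{-1}\bigr)\circ\bigl(\phi_{n,u}\circ f_n^{k}\circ\phi_{n,v}^{-1}\bigr)=:g_n\circ h_n.$$
By the inductive hypothesis $h_n\to h:=f_v^k$ locally uniformly outside a finite set $S_1\subset\S_v$, and by the base case applied at $u$ (with $F(u)=w$) the sequence $g_n\to g:=f_u$ locally uniformly outside the finite set $Y_u\subset\S_u$. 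Since $f_v^{k+1}=f_u\circ f_v^k=g\circ h$, it remains to show $g_n\circ h_n\to g\circ h$ locally uniformly outside a finite set.

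I expect the only delicate point to be this last step, namely composing two sequences each of which converges merely locally uniformly away from a finite set: one must control where $h_n$ sends its arguments. The key remark is that $h=f_v^k$ is a composition of non-constant ramified covers, hence a non-constant holomorphic map, so $h^{-1}(Y_u)$ is finite, and I claim the convergence holds locally uniformly outside the finite set $S:=S_1\cup h^{-1}(Y_u)$. Fixing $z_0\notin S$, one has $h(z_0)\notin Y_u$, so I can choose a compact neighborhood $U$ of $z_0$ disjoint from $S_1$ with $h(\overline U)$ at positive distance from the finite set $Y_u$; since $h_n\to h$ uniformly on $\overline U$, for $n$ large $h_n(\overline U)$ lies in a fixed compact set $K\subset\S_u\setminus Y_u$, on which $g_n\to g$ uniformly and $g$ is uniformly continuous. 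The triangle inequality
$$|g_n(h_n(z))-g(h(z))|\le |g_n(h_n(z))-g(h_n(z))|+|g(h_n(z))-g(h(z))|$$
then forces both terms to tend to $0$ uniformly for $z\in U$, giving locally uniform convergence of $g_n\circ h_n$ to $g\circ h=f_v^{k+1}$ outside the finite set $S$ and completing the induction.
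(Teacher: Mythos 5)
Your proof is correct and takes essentially the same approach as the paper's: the paper inserts the telescoping identities $\phi_{n,F^j(v)}^{-1}\circ\phi_{n,F^j(v)}$ at the intermediate vertices (legitimate precisely because those vertices lie in $IV^X$, where dynamical convergence forces $\phi^Y_{n,\cdot}=\phi^Z_{n,\cdot}$) and concludes in one line, while you organize the identical decomposition as an induction on $k$. Your careful handling of the composition of two sequences converging locally uniformly away from finite sets, with the exceptional set enlarged by the pullback $(f_v^k)^{-1}(Y_u)$, is exactly the detail the paper compresses into ``there is local uniform convergence as soon as the domain iterated does not intersect any attaching point of any edge.''
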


\begin{proof}
Indeed, it is sufficient to note that 
\[\phi_{n,v'}\circ f_n^k\circ \phi_{n,v}^{-1}=\phi_{n,v'}\circ f_n\circ \phi_{n,F^{k-1}(v)}^{-1}\ldots\circ\phi_{n,F^2(v)}\circ f_n\circ \phi_{n,F(v)}^{-1}\circ\phi_{n,F(v)}\circ f_n\circ \phi_{n,v}^{-1}\] 
so there is local uniform convergence as soon as the domain iterated does not intersect any attaching point of any edge.
 \end{proof}


\section{Rescaling-limits}\label{chap5}

\subsection{Definitions}\label{indepresclim}

In this section we recall the definition of rescaling limits introduced by Jan Kiwi in \cite{Kiwi2}.

%
%
%

\begin{definition}
For a sequence of rational maps $(f_n)_n$ of a given degree, a rescaling is a sequence of Moebius transformations $(M_n)_n$ such that there exist $k\in\N$ and a rational map $g$ of degree $\geq 2$ such that
\[M_n\circ f_n^k\circ M_n^{-1}\to g\]
uniformly on compact subsets of $\S$ with finitely many points removed.

If this $k$ is minimum, then it is called the rescaling period for $(f_n)_n$ at $(M_n)_n$ and $g$ is called a rescaling limit for $(f_n)_n$. Moreover $[g]\in\rat_{\deg\, g}$ is called a rescaling limit of the sequence $([f_n])_n$ in $\rat_d$.
\end{definition}

For example the family
$[f_\epsilon:z\mapsto\epsilon (z+1/z)]$ diverges in ${\rm rat}_d$ as $\epsilon$ tends to zero. For these representatives, we have $f_\epsilon\to 0$ as $\epsilon\to 0$ but the family of the second iterates has a non-constant limit:
\[f^2_\epsilon(z)=\epsilon\left(\epsilon (z+{1}/{z})+\frac{1}{\epsilon (z+1/z)}\right)\underset{\varepsilon\to 0}{\longrightarrow} \frac{z}{z^2+1}.\]

This phenomenon is possible because $\rat_d$ is not compact for $d\geq2$. More generally, consider a diverging sequence of conjugacy classes of rational maps in ${\rm rat}_d$. 
The limits of sub-sequences of representatives $(f_n)_n$ are constant maps or maps with degree strictly less than $d$. Sometimes we can have an integer $k\geq 1$ such that $(f_n^k)_n$ converges to a function $f$ which is not constant (thus dynamically interesting) even if every sub-sequence converges to a constant. 

In \cite{Kiwi2}, Jan Kiwi gives various examples of such behaviors and a  historical account on this topic. For his study he uses the formalism of Berkovich spaces in the spirit of \cite{Juan}, \cite{Kiwi3} and \cite{Kiwi1}.

Note that naturally we are interested in sequences in $ \rat_d$, so there is an equivalence relation associated to rescalings if we want to look at rescaling limits in their natural quotient space ($[g]\in \rat_{\deg \,g}$) which is the one defined below.

\begin{definition}[Independence and equivalence of rescalings]
Two rescalings $(M_n)_n$ and $(N_n)_n$ of a sequence of rational maps $(f_n)_n$ are independent if $N_n\circ M_n^{-1}\to \infty$ in $\Rat_1$. That is, for every compact set $K$ in $\Rat_1$, the sequence $N_n\circ M_n^{-1}\notin K$ for $n$ big enough.
They are said to be equivalent if $N_n\circ M_n^{-1}\to M$ in $\Rat_1$.
\end{definition}

\begin{definition}[Dynamical dependence]
Given a sequence $(f_n)_n\in\Rat_d$ and given $(M_n)_n$ and $(N_n)_n$ of period dividing $q$. We say that $(M_n)_n$ and $(N_n)_n$ are dynamically dependent if, for some subsequences $(M_{n_k})_{n_k}$ and $(N_{n_k})_{n_k}$, there exist $1\leq m\leq q$, finite subsets $S_1,S_2$ of $\S$ and non constant rational maps $g_1,g_2$ such that
 \[L^{-1}_{n_k}\circ f^m_{n_k}\circ M_{n_k}\to g_1\]
uniformly on compact subsets of $\S\setminus S_1$ and
\[M^{-1}_{n_k}\circ f^{q-m}_{n_k}\circ L_{n_k}\to g_2\]
uniformly on compact subsets of $\S\setminus S_2$.
\end{definition}


\subsection{From trees of spheres to rescaling-limits}

 Below we explain the relation between rescaling limits and dynamical systems between trees of spheres approximable by a sequence of dynamical systems of marked spheres.

\begin{theorem}\label{lienkiwi0}
Let ${\bf F}$ be a portrait, $(f_n,y_n,z_n)_n\in \Rat_{{\bf F},X}$ and $(\F,\T^X)$ be a dynamical system of trees of spheres. Suppose that
\[\displaystyle { f}_n\overset{\lhd}{\underset{\phi_n^Y,\phi_n^Z}\longrightarrow}{ \F}.\]

If $v$ is a periodic internal vertex in a critical cycle with exact period $k$, then ${f^k_v:\S_v\to\S_v}$ is a rescaling limit corresponding to the rescaling $(\phi^Y_{n,v})_n$. 

In addition, for every $v'$ in the cycle, $(\phi^Y_{n,v'})_n$ and $(\phi^Y_{n,v})_n$ are dynamically dependent rescalings.
\end{theorem}

%
%
%
%
%

\begin{proof}
If $v$ is a periodic internal vertex in a critical cycle with exact period $k$, then according to Lemma \ref{convit}, $(\phi^Y_{n,v}\circ f_n^k\circ (\phi^Y_{n,v})^{-1})_n$ converges locally uniformly to
${f^k_v:\S_v\to\S_v}$ so $(\phi^Y_{n,v})_n$ is a rescaling and the rescaling limit is ${f^k_v}$.

Again, according to  Lemma \ref{convit}, if $0<k'<k$, then $(\phi_{n,F{k'}(v)}\circ f_n^{k'}\circ \phi_{n,v}^{-1})_n$ and $(\phi_{n,v}\circ f_n^{k-k'}\circ \phi_{n,F^{k'}(v)}^{-1})_n$ converge respectively and locally uniformly outside finite sets to 
$f^{k'}_{v}$ and $f^{k-k'}_{F^{k}(v)}$, so the rescalings $( \phi_{n,v})_n$ and $(\phi_{n,F{k'}(v)})_n$ are dynamically dependent.
 \end{proof}


\subsection{From rescaling-limits to trees of spheres}

In this section, we explore the reciprocal question: if there exist rescaling limits, does there exists a dynamical systems between trees of spheres such that these rescalings correspond to spheres in critical periodic cycles as described in the previous section? 
The following theorem gives the answer.

\begin{theorem}\label{intro3}
Given a sequence $(f_n)_n$ in $\Rat_d$ for ($d\geq2$) with $p\in \N^*$ classes $M_1,\ldots,M_p$ of rescalings.
Then, passing to a subsequence, there exists a portrait ${\bf F}$, a sequence ${(f_n,y_n,z_n)_n\in\Rat_{{\bf F},X}}$ and a dynamical system between trees of spheres $(\F,\T^X)$ such that
\begin{enumerate}
\item $\displaystyle { f}_n\overset{\lhd}{\underset{\phi_n^Y,\phi_n^Z}\longrightarrow}{ \F}$ and
\item $\forall i\in[\!\![1,p]\!\!], \exists v_i\in\T^Y$, $M_i\sim(\phi^Y_{n,v_i})_n$.
\end{enumerate}
\end{theorem}

%

\begin{proof}
After passing to a subsequence, we can suppose that the number of critical values of the $f_n$ and the number of their preimages, and their respective multiplicities are constant.

Suppose that $\forall n\in\N, M_n=Id$.
Denote by $g$ the corresponding rescaling limit. The map $g$ has at least three periodic repelling cycles. Take one point, on each of these cycles, $x^1,x^2$ and $x^3$. As the cycles are repelling, they still exist on a neighborhood of $g$. 
We can take $x^i_n$ of fixed period $p_i\in\N$ such that $(x^i_n)\to x^i$.
 Let
\begin{enumerate}
\item  $X_n$ be the union of the cycles of $x^1_n,x^2_n$ and $x^3_n$ ;
\item $Z_n$ be the union of $X_n$ and the set of critical values of $f_n$ and 
\item $Y_n$ be $f_n^{-1}(Z_n)$.
\end{enumerate}

After passing to a subsequence, we can suppose that the cardinals of $X_n, Y_n$ and $Z_n$ don't depend on $n$. After changing the representative, we can suppose that $x_0^i=x^i$. Define $x_n:X_0\to\S$ by $x_n(x^i)=x_n^i$. Then, passing to a subsequence, we define  $y_n$ and $z_n$ to be such that the following diagram commutes :

\centerline{
$\xymatrix{
 Y_0 \ar[rrr]^{y_n}    \ar[dd] _{{ f_0}} &&&Y_n\subset\S  \ar[dd]^{f_n} \\
& \ar@{_{(}->}[lu] \ar@{_{(}->}[ld] X_0 \ar[r]^{x_n} &X_n \ar@{^{(}->}[ru] \ar@{^{(}->}[rd]&\\
      Z_0 \ar[rrr]_{z_n}  &&&Z_n\subset\S.
  }$}
  
It follows that the $(f_n,y_n,x_n)$ are dynamical systems between marked spheres of portrait given by the restriction of $f_0$, and its corresponding degree function (again after extracting).
From Theorem \ref{thmcomp00}, there exist dynamical systems between trees of spheres $(\F,\T^X)$ which are approximable by this sequence, so dynamically approximable by this sequence according to Lemma \ref{dynconvbis}.

Let $v$ be the vertex separating $x^1,x^2$ and $x^3$ in $T^Y$.
Using Lemma \ref{ext22}, that we will prove later, we can suppose that the vertex $v$ is not forgotten by $F^k$ (indeed, $v$ separate three elements of $z$ and from this lemma we can assume that they are in $T^X$ and then apply Lemma \ref{definiX}). Define $v':=F^k(v)$.

We are going to prove that :
\begin{enumerate}
\item $\phi_{n,v}$ converges to a Moebius transformation $M$,
\item v'=v,
\item $f_v^k$ and $f_n^k$ are equivalents.
\end{enumerate}

The first claim is clear according to the fact that $\phi_{n,v}\circ x_n(x^i)\to a_v(x^i)$. For the second claim we remark that
\[ \phi_{n,v}\circ\phi_{n,v'}^{-1}\circ (\phi_{n,v'}\circ f_n^k\circ \phi_{n,v}^{-1})= \phi_{n,v}\circ f_n^k\circ \phi_{n,v}^{-1}.\]
Indeed, the right side converges to $ g$ and the term between parenthesis converges to $f^k_v$. We deduce that $f_v^k=g$ so the third claim follows and as $\phi_{n,v}$ converges to a Moebius transformation $M$, we have proven that $(M_n)_n=(Id)_n\sim(\phi^Y_{n,v_i})_n$.

Suppose that $(M_n)_n$ is a rescaling of period $k$. 
Given that \[\displaystyle ({ f}_n\overset{\lhd}{\underset{\phi_n^Y,\phi_n^Z}\longrightarrow}{ \F} ) \implies  (M_n\circ { f}_n\circ M_n^{-1} \xrightarrow[\phi_n^Y\circ M_n^{-1},\phi_n^Z\circ M_n^{-1}]{\lhd}\F ),\]
we can consider that $M_n=Id$ and use the preceding  case.
If we have more rescaling limits, we can adapt this proof by marking three periodic cycles for each rescaling limit.
 \end{proof}

First we define the following.
\begin{definition}[Extension] Let $\tilde X$, $\tilde Y$ and $\tilde Z$ be finite sets containing at least three elements with $\tilde X\subset \tilde Y\cap\tilde Z$. 
We say that $(\F:\T^{ Y}\to\T^{ Z},\T^{ X})$ is an extension of 
$(\tilde\F:\T^{\tilde Y}\to\T^{\tilde Z},\T^{\tilde X})$ if these are two dynamical systems between trees of spheres and if \begin{enumerate}
\item $\T^\star\lhd\T^{\tilde\star}$, 
\item $\tilde F|_{IV^Y}= F|_{IV^Y}$ and
\item $(\deg \tilde F)|_Y=\deg F$. 
\end{enumerate}
\end{definition}

We will write $  (\F,\T^X)\lhd (\tilde \F,\T^{\tilde X}) $, and more generally we use the notation 
\[(f_n, y_n, z_n)_n\lhd(f_n,\tilde y_n,\tilde z_n)_n\]
 when for every $n\in\N$ we have $(f_n, y_n, z_n)\lhd (f_n,\tilde y_n,\tilde z_n)$ and all the $(f_n,\tilde y_n,\tilde z_n)$ have the same portrait.

Before proving Lemma \ref{ext22}, we prove the following lemma.

\begin{lemma}\label{lablablab}
If $(x_n)_n$ and $(y_n)_n$ are sequences of spheres marked respectively by $X$ and $Y$ such that $(x_n)_n\lhd(y_n)_n$ and $\displaystyle  {x}_n\underset{\phi^X_n}\longrightarrow { \T}^X$ then after passing to a subsequence, there exists a tree of spheres $\T^Y$ such that:
\begin{enumerate}
\item $\displaystyle  {y}_n\underset{\phi^Y_n}\longrightarrow { \T}^Y$,
\item $\T^X\lhd\T^Y$ and
\item $\forall v\in T^X,\phi^X_{n,v}=\phi^Y_{n,v}$.
\end{enumerate}
\end{lemma}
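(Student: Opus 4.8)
The plan is to first produce a limit for the larger sequence by the compactness result, and then to readjust its spheres and charts so that it becomes compatible with the \emph{given} tree $\T^X$. Recall that $(x_n)_n\lhd(y_n)_n$ means $X\subseteq Y$ and each $x_n$ is the restriction $y_n|_X$. First I would apply Theorem \ref{thmcomp} to the sequence $(y_n)_n$: after passing to a subsequence there are a tree of spheres $\T^Y$ marked by $Y$ and charts $\psi_{n,w}^Y$ with $y_n\underset{\psi_n^Y}\longrightarrow\T^Y$. The combinatorial tree $T^Y$ is then fixed, and the whole difficulty is to match it, together with its spheres and markings, to $\T^X$ without destroying this convergence.

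The bridge between the two trees is a correspondence $\iota\colon IV^X\to IV^Y$. Given an internal vertex $v$ of $T^X$, stability provides three leaves $\chi_1,\chi_2,\chi_3\in X$ in three distinct branches of $v$; since $X\subseteq Y$ and three distinct leaves of a tree are never on a common path, they are separated in $T^Y$ by a unique vertex, necessarily internal, which I call $\iota(v)=:w$. The key point is that the charts at $v$ and $w$ differ by a convergent Moebius transformation. Writing $M_n:=\psi_{n,w}^Y\circ(\phi_{n,v}^X)^{-1}$ and using $\phi_{n,v}^X\circ x_n\to a_v^X$ and $\psi_{n,w}^Y\circ y_n\to a_w^Y$, the maps $M_n$ send the points $\phi_{n,v}^X\circ x_n(\chi_j)$, whose limits $a_v^X(\chi_j)$ are pairwise distinct, to the points $\psi_{n,w}^Y\circ x_n(\chi_j)$, whose limits $a_w^Y(\chi_j)$ are also pairwise distinct; by the normal-family argument for Moebius maps pinned at three points, $M_n$ converges to a Moebius transformation $M_v$. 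Transporting the projective structure of $\S_w^Y$ through $M_v$ so that $\S_w^Y=\S_v^X$, then post-composing $\psi_{n,w}^Y$ by automorphisms converging to the identity (which leaves the convergence of $(y_n)_n$ untouched), I can arrange $\psi_{n,w}^Y=\phi_{n,v}^X$ for every $v\in IV^X$. As $IV^X$ is finite these finitely many adjustments are harmless, and uniqueness of limits then forces $a_w^Y|_X=a_v^X$.

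It remains to verify the combinatorial compatibility $\T^X\lhd\T^Y$ after identifying $v$ with $\iota(v)$. That $\iota$ is well defined (independent of the chosen triple) and injective follows from Lemma \ref{noncomp}: if two distinct internal vertices were produced, the relevant sequence $\psi_{n,w'}^Y\circ(\psi_{n,w}^Y)^{-1}$ (resp. $\phi_{n,v'}^X\circ(\phi_{n,v}^X)^{-1}$) would on the one hand be a product of convergent Moebius maps, hence converge to a Moebius transformation, and on the other hand converge to a constant by Lemma \ref{noncomp}, a contradiction. For the separation condition, the identity $a_{\iota(v)}^Y|_X=a_v^X$ shows that on the matched sphere the attaching points of the elements of $X$ coincide, so two elements of $X$ lie in the same branch of $v$ in $T^X$ exactly when they do at $\iota(v)$ in $T^Y$; reducing the separation of arbitrary vertices of $V^X$ to the separation of leaves of $X$ chosen in their branches then yields the equivalence required by the definition of compatibility, and the three requested properties hold for the readjusted $\T^Y$.

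I expect the combinatorial bookkeeping of this last step to be the main obstacle: one must check carefully that $\iota$ carries the separation relation on \emph{all} of $V^X$, not merely on triples of leaves, to the corresponding relation in $T^Y$, treating internal vertices by representing them through leaves of $X$ in suitable branches, and one must make the finitely many chart modifications coherently so that $y_n\underset{\psi_n^Y}\longrightarrow\T^Y$ is preserved. The three-point normal-family estimate and Lemma \ref{noncomp} do all the analytic work; the remainder is organizing the identification $v\equiv\iota(v)$ consistently.
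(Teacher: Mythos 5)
Your proposal is correct and takes essentially the same route as the paper's own proof: both invoke Theorem \ref{thmcomp} to get a limit tree for $(y_n)_n$, match each internal vertex of $T^X$ to the vertex of that tree separating the same triple of points of $X$, establish consistency of this matching via the three-point pinning argument combined with the contradiction from Lemma \ref{noncomp}, and then substitute the spheres and charts of $\T^X$ at the matched vertices. The only difference is cosmetic: the paper replaces the spheres and charts outright, whereas you realize the same identification by transporting the projective structure through the limit Moebius map and post-composing with automorphisms converging to the identity.
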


\begin{proof}
Using Theorem \ref{thmcomp} (cf Introduction), after passing to a subsequence, we define a tree $\check\T^Y$ and a sequence $\check\phi_{n,v}$ for all $v\in\check T^Y$ such that $\displaystyle  {y}_n\underset{\check\phi_n}\longrightarrow {\check \T}^Y$.

Then, for every vertex $v\in T^X$ separating three elements of $X$, we consider the vertex $\check v$ in $\check T^Y$ separating the same three elements and we want to replace the $\check v$ by the $v$ in $\check T^Y$ to define a new tree $\T^Y$, and the $\check\phi_{n,\check v}$ by the $\phi_{n,v}$ such that the lemma follows immediately. 
This is possible if, when two triples of elements of $X$ separate the same vertex in $T^X$, then they do the same in $\check T^Y$.

Consider two triples $t_i=(\chi_1,\chi_2,\chi_3)$ and $t_2=(\check \chi_1,\check \chi_2,\check \chi_3)$ that are separated by the same vertex $v$ in $T^X$, but not in $\check T^Y$. After changing the labelings, we can consider that $v_1$ separate $\chi_1,\chi_2,\chi_3$ in $T^Y$ with $i_{\check v}(\chi_1)=i_{\check v}(\chi_2)$, and that $v_2$ separates $\check \chi_1,\check \chi_2,\check \chi_3$ in $T^Y$ with $i_{ v}(\check \chi_1)=i_{ v}(\check \chi_2)$ as in Figure \ref{remfond2}.
Define the Moebius transformations $M_i:\S_v\to\S_{v_i}$ to be such that for $1\leq j\leq 3$, $M_1$ maps $i_v(\chi_j)$ to $i_{v_1}(\chi_j)$ and $M_1$ maps $i_v(\check \chi_j)$ to $i_{v_2}(\check \chi_j)$.

Then we have $M_2\circ M_1^{-1}=\lim( \phi_{n,v_2}\circ\phi_{n,v}^{-1})\circ(\phi_{n,v}\circ\phi_{n,v_1}^{-1})=\lim  \phi_{n,v_2}\circ\phi_{n,v_1}^{-1}$. This would contradict Lemma \ref{noncomp}.
 \end{proof}

 \begin{figure}
  \centerline{\includegraphics[width=11cm]{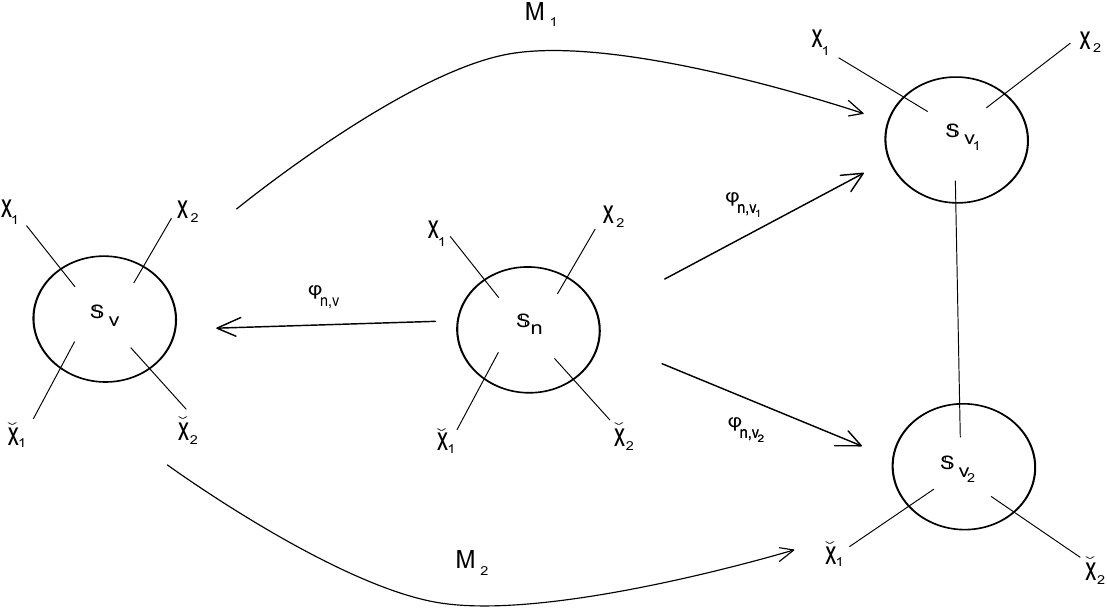}}
   \caption{Notations for the proof of Lemma \ref{lablablab}.}
\label{remfond2} \end{figure}

\begin{lemma}\label{ext22} Suppose that $\displaystyle { f}_n\overset{\lhd}{\longrightarrow}{ \F}$ and $z\in Z\setminus X$.
Then after passing to a subsequence there exist extensions $(f_n, y_n, z_n)_n\lhd(\tilde f_n,\tilde y_n,\tilde z_n)_n$ with $z\in \tilde X$ and $\forall n\in\N, \tilde x_n(z)=z_n(z)$ and $\tilde \F$ such that $\displaystyle {\tilde f}_n\overset{\lhd}{\longrightarrow}{\tilde \F}$  and 
\begin{enumerate}
\item $\T^X\lhd\T^{\tilde X}$, $\T^Y\lhd\T^{\tilde Y}$,
and $\T^X\lhd\T^{\tilde Z}$,
\item $\forall v\in IV^{ Y}, F(v)\in T^X\implies \tilde f_v=f_v$.
\end{enumerate}
\end{lemma}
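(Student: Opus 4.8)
The plan is to enlarge the markings so that $z$ becomes a dynamically marked point, and then to transport the Deligne--Mumford limit machinery of Theorem \ref{thmcomp00} and Lemma \ref{lablablab} along this enlargement while controlling exactly which projective charts are allowed to move. The hypothesis $F(v)\in T^X$ in the conclusion should be read as the guarantee that $v$ survives the chart readjustment that makes the extended system dynamical.

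First I would extend the markings combinatorially. Set $\alpha_n:=z_n(z)\in\S_n$; since we want $z\in\tilde X\subseteq\tilde Y\cap\tilde Z$ with $\tilde x_n(z)=\alpha_n$, the point $\alpha_n$ must be marked both in the target, where it already is because $z\in Z$, and in the source. To mark it in the source I add the whole fibre $f_n^{-1}(f_n(\alpha_n))$ to the source leaves and, if necessary, $f_n(\alpha_n)$ to the target leaves. Passing to a subsequence so that the number of critical values of $f_n$, of their preimages, the local degrees, and all coincidences between the finitely many marked points are constant in $n$, this produces fixed finite sets $\tilde X:=X\cup\{z\}$, $\tilde Y\supseteq Y$, $\tilde Z\supseteq Z$ with $\tilde X\subseteq\tilde Y\cap\tilde Z$, a portrait $\tilde{\bf F}$ given by the restriction of $f_n$, and markings $\tilde y_n,\tilde z_n$ extending $y_n,z_n$ with $\tilde y_n(z)=\tilde z_n(z)=\alpha_n$. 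Hence $(f_n,\tilde y_n,\tilde z_n)_n\in\Rat_{\tilde{\bf F},\tilde X}$ and $(f_n,y_n,z_n)_n\lhd(f_n,\tilde y_n,\tilde z_n)_n$.

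Next I would build the limit objects with controlled charts. Applying Lemma \ref{lablablab} to the source extension $(y_n)_n\lhd(\tilde y_n)_n$ gives, after a further subsequence, a tree of spheres $\T^{\tilde Y}$ with $\T^Y\lhd\T^{\tilde Y}$ and charts $\tilde\phi^Y_{n,v}=\phi^Y_{n,v}$ for all $v\in T^Y$; the sub-tree spanned by $\tilde X$ yields $\T^{\tilde X}\lhd\T^{\tilde Y}$, and since $X\subseteq\tilde X$ with matching charts on $T^X$ we get $\T^X\lhd\T^{\tilde X}$. I then take these source charts as the dynamical charts, $\tilde\phi^X_{n,v}:=\tilde\phi^Y_{n,v}$ on $IV^{\tilde X}$. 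Applying Lemma \ref{lablablab} to $(z_n)_n\lhd(\tilde z_n)_n$ gives $\T^{\tilde Z}$ with target charts equal to $\phi^Z_{n,v}$ on $T^Z$; then, exactly as in the proof of Lemma \ref{dynconvbis}, I replace the target chart by $\tilde\phi^X_{n,v}$ at each $v\in IV^{\tilde X}$. This replacement is by a sequence of Moebius maps converging to a Moebius transformation, so it preserves compatibility on $X$, giving $\T^X\lhd\T^{\tilde Z}$, and makes the convergence dynamical; by Theorem \ref{thmcomp00} the sequence converges to a cover $\tilde\F:\T^{\tilde Y}\to\T^{\tilde Z}$ with $\tilde f_n\overset{\lhd}{\longrightarrow}\tilde\F$, so that $(\tilde\F,\T^{\tilde X})$ is a dynamical system extending $(\F,\T^X)$.

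Finally I would check the invariance $\tilde f_v=f_v$ when $w:=F(v)\in T^X$. Since $\tilde F$ extends $F$ we have $\tilde F(v)=w$, and $\tilde f_v=\lim_n\tilde\phi^Z_{n,w}\circ f_n\circ(\tilde\phi^Y_{n,v})^{-1}$ while $f_v=\lim_n\phi^Z_{n,w}\circ f_n\circ(\phi^Y_{n,v})^{-1}$, both locally uniformly off the attaching points. On the source, $v\in T^Y$ gives $\tilde\phi^Y_{n,v}=\phi^Y_{n,v}$; on the target, $w\in IV^X\subseteq IV^{\tilde X}$, so $\tilde\phi^Z_{n,w}=\tilde\phi^X_{n,w}=\tilde\phi^Y_{n,w}=\phi^Y_{n,w}=\phi^Z_{n,w}$, the last equality because $(\F,\T^X)$ is dynamical on $IV^X$; hence the two limits agree. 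The hard part, and the reason the conclusion is restricted to $F(v)\in T^X$, is precisely this chart bookkeeping: the dynamical upgrade forces a Moebius readjustment of the target charts exactly on the newly dynamical vertices $IV^{\tilde X}\setminus IV^X$, which is what prevents the stronger relation $\T^Z\lhd\T^{\tilde Z}$ and changes $\tilde f_v$ for vertices with $F(v)\notin T^X$; the hypothesis $F(v)\in T^X$ is what shields $v$ from this readjustment.
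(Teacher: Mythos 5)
Your construction follows essentially the same route as the paper's proof: mark the fibre $f_n^{-1}(f_n(z_n(z)))$ in the source and the image point in the target, extract a subsequence so that the extended portrait stabilizes, build $\T^{\tilde Y}$, $\T^{\tilde X}$, $\T^{\tilde Z}$ with Lemma \ref{lablablab}, readjust the target charts at the vertices of $IV^{\tilde X}$ so that the convergence becomes dynamical, and invoke Theorem \ref{thmcomp00} to get $\tilde\F$. Up to that point your chart bookkeeping is sound (the paper phrases the readjustment as a ``replacement of vertex'' on $\T^{\tilde Z}$, and, like you, it notes that this is exactly what destroys the stronger relation $\T^Z\lhd\T^{\tilde Z}$).

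The gap is the sentence ``Since $\tilde F$ extends $F$ we have $\tilde F(v)=w$.'' Nothing in your construction gives this: Theorem \ref{thmcomp00} produces \emph{some} cover $\tilde\F:\T^{\tilde Y}\to\T^{\tilde Z}$, and a priori its combinatorial part at a vertex $v\in IV^Y$ bears no relation to $F(v)$. Indeed ``$\tilde F$ extends $F$'' is not even meaningful when $F(v)\notin T^X$, since then $F(v)\in T^Z$ need not be a vertex of $T^{\tilde Z}$ at all --- and you yourself observe in your last sentences that $\tilde f_v$ genuinely changes for such $v$, which contradicts your earlier blanket assertion. So the equality $\tilde F(v)=w$ for $w=F(v)\in T^X$ is precisely the nontrivial point, and it is where the paper spends the final paragraph of its proof: assume $\tilde F(v)=v''\neq v'=F(v)$; both $v'$ and $v''$ are internal vertices of $T^{\tilde Z}$ (because $\T^X\lhd\T^{\tilde Z}$), so Lemma \ref{noncomp}, applied to $\tilde z_n\to\T^{\tilde Z}$, shows that $\tilde\phi^Z_{n,v'}\circ(\tilde\phi^Z_{n,v''})^{-1}$ converges, off one point, to a constant. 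Writing
\[
f_v=\lim_n \phi^Z_{n,v'}\circ f_n\circ(\phi^Y_{n,v})^{-1}
=\lim_n \bigl(\tilde\phi^Z_{n,v'}\circ(\tilde\phi^Z_{n,v''})^{-1}\bigr)\circ\bigl(\tilde\phi^Z_{n,v''}\circ f_n\circ(\tilde\phi^Y_{n,v})^{-1}\bigr)
\]
(the charts at $v$ and at $v'$ are unchanged, exactly as in your bookkeeping), and using that the second factor tends to the non-constant map $\tilde f_v$, one concludes that $f_v$ is constant off a finite set, contradicting that $f_v$ is a ramified cover. Hence $v''=v'$, and only then does your final chart computation yield $\tilde f_v=f_v$. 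Without this uniqueness-of-the-limit-vertex argument your proof is incomplete; with it added, it coincides with the paper's.
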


\begin{proof}
 After passing to a subsequence we can assume that, either there exists $y\in Y$ such that $\forall n\in\N, z_n(z)=y_n(y)$, or $\forall n\in\N,z_n(z)\notin y_n(Y)$.

In the first case we define $\tilde X=X\cup\{z\}$. We define $\forall n\in \N, \tilde x_n(x)=x_n(x)$ for all $x\in X$ and $\tilde x_n(y)=y_n(y)$; we then have $(x_n)_n\lhd(\tilde x_n)_n$. Using Lemma \ref{lablablab}, we define a tree $\T^{\tilde X}$. Either $\T^{\tilde X}=\T^X\lhd\T^Y$, or $\T^{\tilde X}$ has exactly one more vertex then $\T^X$. In the latter case this vertex $v$ separate a $(y,x_1,x_2)$ with $x_1,x_2\in X$ and $(y,x_1,x_2)$ separates a unique vertex $v'$ in $\T^Y$. After replacing $v$ by $v'$ in $\T^{\tilde X}$, we have $\T^{\tilde X}\lhd\T^Y$ and the tree $\T^{\tilde X}$ still satisfies the conclusion of Lemma \ref{lablablab}.

We define $\tilde Y:=Y$ and $(\tilde y_n)_n:=(y_n)_n, \T^{\tilde Y}:=\T^Y$ and we then have $\T^Y\lhd\T^{\tilde Y}$.
 We identify $z$ and $y$ in $Z$, $(\tilde z_n)_n$ and $(z_n)_n$, $ \T^{\tilde Z}$ and $\T^Z$ after replacing the vertex separating $(y,x_1,x_2)$ in $\T^Z$ by $v'$. We then identify $\tilde \F$ and $\F$ according to the previous identifications and the result follows.
 
 In the second case (ie $\forall n\in\N,z_n(z)\notin y_n(Y)$) we define $\tilde X:=X\cup\{z\}$ and, using the same type of arguments as in the first case, we take the following steps.  
 
 $\star$ We set $\check Y=Y\cup\{(z_0(z))\}$. We construct a tree $\check \T^Y$ with $\T^Y\lhd\T^{\check Y}$ by extending $y_n$ to an injection $\check y_n$ with $\check y_n(z)= z_n(z)$, then a tree $\tilde \T^X$ with $\T^X\lhd\T^{ \tilde X}$ by extending $x_n$ to an injection $\tilde x_n$ with $\tilde x_n(z)= z_n(z)$. After a replacement of vertex on $\T^{\tilde X} $ we can suppose that $\T^{\tilde X}\lhd\T^{\check Y}$. 
 
  $\star$ We set $\tilde Z=Z\cup\{ f_0(y_0(z))\}$. We construct a tree $ \T^{ \tilde Z}$ with $\T^Z\lhd\T^{\check Z}$ by extending $z_n$ to an injection $\tilde z_n$ with $\tilde z_n(z)= f_n(y_n(z))$. After a replacement of vertex on $\T^{\tilde Z} $ we can suppose that $\T^{\tilde X}\lhd\T^{\tilde Z}$ (note that here we don't have necessarily $\T^{Z}\lhd\T^{\tilde Z}$).
 
 $\star$  We set $\tilde Y=Y\cup\{f_0^{-1}(f_0(y_0(z)))\}$, construct a tree $\T^{\tilde Y}$ with $\check \T^Y\lhd\T^{\tilde Y}$ by extending $\check y_n$ to an injection $\tilde y_n$ with $\tilde y_n(\tilde Y-Y)= f^{-1}_n(f_n(z))$ such that $\T^{\check Y}\lhd\T^{\tilde Y}$.
 
$\star$ Thus we have $\T^X\lhd\T^{\tilde X}$, $\T^Y\lhd\T^{\tilde Y}$,
and $\T^X\lhd\T^{\tilde Z}$.

 According to Theorem \ref{thmcomp00}, there exists a cover between trees of spheres \\${\tilde \F: \T^{\tilde Y}\to\T^{\tilde Z}}$ such that $(\tilde f_n,\tilde y_n,\tilde z_n)_n\to\tilde \F$.
Suppose that there exists a vertex  $v\in IV^Y$ such that $F(v)=v'\in T^X$ and $\tilde F(v)=v''\in T^{\tilde Z}$ with $v'\neq v''$. Then $v'\in T^{\tilde Z}$ because $ \T^X\lhd\T^{\tilde Z}$. Thus $\phi_{n,v'}\circ\phi_{n,v''}^{-1}$ converges uniformly outside a finite number of points to a constant. However, 
\[f_{v}=\lim \phi_{n,v'}\circ f_n\circ\phi^{-1}_{n,v}=\lim (\phi_{n,v'}\circ\phi^{-1}_{n,v''})\circ\phi_{n,v''}\circ f_n\circ\phi^{-1}_{n,v}\]
but $\phi_{n,v''}\circ f_n\circ\phi^{-1}_{n,v}$ converges uniformly to $\tilde f_v$ outside a finite set, therefor this is impossible and $v'= v''$. 
  \end{proof}


\subsection{{Theorem \ref{alpha}} and further considerations}

\begin{proof}[{Theorem \ref{alpha}}]  Take a sequence $(f_n)_n$ in $\Rat_d$ for $d\geq 2$ and suppose that it has strictly more then $p>2d-2$ dynamically independent  rescalings for which the associated rescaling limits are non post-critically finite. Then according to Theorem \ref{intro3}, passing to a subsequence, there exist a portrait ${\bf F}$, a sequence ${(f_n,y_n,z_n)_n\in\Rat_{{\bf F},X}}$ and a dynamical system between trees of spheres $(\F,\T^{ X})$ such that
\[\displaystyle { f}_n\overset{\lhd}{\underset{\phi_n^Y,\phi_n^Z}\longrightarrow}{ \F},\]
thus according to Theorem \ref{lienkiwi0} these classes of dynamically independent rescalings are associated to critic periodic cycles of spheres with a non post-critically finite associated cover. As ${\bf F}$ has degree $d$, because the $f_n$ lye in $\Rat_d$, this contradicts Theorem \ref{thmkiw1}.
 \end{proof}

We can see from the proof of Theorem \ref{intro3} that it is sufficient to mark some cycles to find the rescaling-limits but there is still an important question.

\begin{question}\label{question}How do we know which cycles we have to mark in order to find the rescaling-limits?\end{question}

In general this is not simple. For example, in a current work of  A. Epstein and C.L. Petersen (Limits of Polynomial-like Quadratic Rational Maps II, in preparation), the authors prove that we can have a non-trivial rescaling of any period in the case of degree $2$. 
There is another question that the reader should keep in mind. We defined dynamical systems between trees of spheres in a very general setting but the ones that lye to an interpretation in terms of rescaling limits are the one which are dynamically approximable by some sequence of dynamically marked rational maps. Hence we naturally arrive to the following:

\begin{question}\label{question2}Is every dynamical system between trees of spheres dynamically approximable by some sequence of dynamically marked rational maps?
\end{question}

The answer to this question is no, and a counterexample  is given in \cite[Figure 1]{A2}. This answer requires more technical results that are made explicit in \cite{A2}, where we give some necessary conditions for a dynamical system between trees of spheres to be approximable by some sequence of dynamically marked rational maps.





\end{document}